\definecolor{vert}{rgb}{0.09,0.7,0.17}
\definecolor{violet}{rgb}{0.69,0.13,0.69}
\newtheorem{theorem}{Theorem}[section]
\newtheorem{proposition}[theorem]{Proposition}
\newtheorem{corollary}[theorem]{Corollary}
\newtheorem{lemma}[theorem]{Lemma}
\newtheorem{remark}[theorem]{Remark}
\newcommand{\R}{\mathbb{R}} % set of real numbers
\renewcommand{\P}{\mathbb{P}}
\newcommand{\E}{\mathbb{E}} % esperance
\DeclareMathOperator{\var}{Var} %variance
\DeclareMathOperator{\cov}{Cov} %variance
\newcommand{\telque}{\;:\;}
\newcommand{\cond}{\;|\;}
\newcommand{\mtc}{\mathcal}
\newcommand{\mbf}{\mathbf}
\newcommand{\wt}[1]{{\widetilde{#1}}}
\newcommand{\ol}[1]{\overline{#1}}
\newcommand{\ind}[1]{{\mbf{1}\left\{#1\right\}}}
\newcommand{\eps}{\varepsilon}
\newcommand{\G}{{\widehat{\mathbb{F}}}_{m}}
\newcommand{\Gzero}{{\widehat{\mathbb{F}}}_{0,m}}
\newcommand{\Gone}{{\widehat{\mathbb{F}}}_{1,m}}
\newcommand{\Gchap}{{\widehat{\mathbb{G}}}_{m}}
\newcommand{\Gtilde}{{\widetilde{\mathbb{F}}}_{m}}
\newcommand{\FDP}{\mbox{FDP}} 
\newcommand{\T}{\mathcal{T}}
\newcommand{\Z}{\mathbb{Z}}
\newcommand{\qnorm}{ {\Phi}^{-1} }
\newcommand{\pnorm}{ {\Phi}}
\renewcommand{\l}{\ell}
\begin{document}

\begin{frontmatter}

\title{On empirical distribution function of high-dimensional Gaussian vector components with an application to multiple testing.}

\begin{aug}
\author{\fnms{Sylvain} \snm{Delattre}\ead[label=e2]{sylvain.delattre@univ-paris-diderot.fr}}
\address{
 Université Paris Diderot, LPMA,\\ %???, France\\
\printead{e2}\\
\phantom{E-mail: sylvain.delattre@univ-paris-diderot.fr}}
\and
\author{\fnms{Etienne} \snm{Roquain}\ead[label=e1]{etienne.roquain@upmc.fr}}
\address{
UPMC Université Paris 6, LPMA,\\ %4, Place Jussieu, 75252 Paris cedex 05, France\\
\printead{e1}\\
\phantom{E-mail: etienne.roquain@upmc.fr}}
\runauthor{Delattre, S. and Roquain, E.}
\end{aug}

\begin{abstract}

This paper introduces a new framework to study the asymptotical behavior of the empirical distribution function (e.d.f.) of Gaussian vector components, whose correlation matrix $\Gamma^{(m)}$ is dimension-dependent.
Hence, by contrast with the existing literature, the vector is not assumed to be stationary. 
Rather, we make a ``vanishing second order" assumption ensuring that the covariance matrix $\Gamma^{(m)}$ is not too far from the identity matrix, while the behavior of the e.d.f. is affected by  $\Gamma^{(m)}$
only through the sequence $\gamma_m=m^{-2} \sum_{i\neq j} \Gamma_{i,j}^{(m)}$, as $m$ grows to infinity.
This result recovers some of the previous results for stationary long-range dependencies while it also applies to various, high-dimensional, non-stationary frameworks, for which the most correlated variables are not necessarily next to each other. 
Finally, we present an application of this work to the multiple testing problem, which was the initial statistical motivation for developing such a methodology. 
\end{abstract}

\begin{keyword}[class=AMS]
\kwd[Primary ]{60F17}
\kwd[; secondary ]{62G30}
\end{keyword}

\begin{keyword}
\kwd{empirical distribution function}\kwd{functional central limit theorem}\kwd{factor model}\kwd{Sample correlation matrix}\kwd{Gaussian triangular arrays}\kwd{Hermite polynomials}\kwd{functional delta method}  \kwd{false discovery rate} 
\end{keyword}

% history:
% \received{\smonth{1} \syear{0000}}
%\tableofcontents

\end{frontmatter}

%\tableofcontents

\section{Introduction}

\subsection{Motivation and background} 

Pertaining to the florishing field of statistics for high-dimensional data, the Benjamini-Hochberg (BH) procedure has become a well accepted and commonly used method when testing a large number of null hypotheses simultaneously. Its quality is measured via the false discovery proportion (FDP), the proportion of errors among the rejected null hypotheses, whose expectation is the celebrated false discovery rate (FDR), see \cite{BH1995}.
The methodology of \cite{Neu2008} shows that the FDP of BH procedure is an (Hadamard differentiable) functional of  empirical cumulative distribution functions (e.d.f. in short).  Via the functional delta method (see, e.g., \cite{Vaart1998}), this rises the problem of obtaining functional central limit theorems for e.d.f. in 
a setting which is suitable for high-dimensional data. 

A colossal number of work aimed at extending Donsker's theorem (\citealp{Doo1949,Don1952,Dud1966}) to a more relaxed setup.
  Among them, a particularly prospering research field deals with the introduction of weak dependence between the original variables, mainly by using mixing conditions.  Here, we do not attempt to provide an exhaustive list for such results and we refer the reader to, e.g.,  \cite{DP2007,DLST2010} for detailed reviews. 
When restricted to the Gaussian subordinated setting,  asymptotics for the e.d.f. are described in the two well-known papers of \cite{DT1989} (long-range) and \cite{CM1996} (short-range). Both studies make a \textit{stationarity} assumption: the covariance matrix between the variables is assumed to be of the form
$$
\Gamma_{i,j}^{(m)} = r(|i-j|), \:1\leq i,j\leq m,
$$
for some  function $r(\cdot)$ vanishing at infinity and not depending on $m$.

However, in high-dimensional data, while the dimension $m$ can be very large (typically, several thousands), the  matrix $\Gamma^{(m)}$  is generally complex and not-necessarily locally structured. This is typically the case when latent variables (factors) 
have a simultaneous impact on all the variables (see, e.g.,  \citealp{FKC2009,SZO2012, FHG2012} and references therein), which leads to ``spiked" correlation matrices (as refered to by \citealp{John2001}). 
In a more general view, the larger the dimension, the more stringent the stationary assumption.

\subsection{Presentation of the main result}\label{sec:setting}

Let us consider $\{Y^{(m)}, m\geq 1\}$ a triangular array for which each $Y^{(m)}=\left(Y^{(m)}_1, \dots, Y^{(m)}_m\right)$ is a $m$-dimensional Gaussian vector, defined on some probability space $(\Omega_m,\mtc{F}_m,\P_m)$, with zero mean and covariance matrix $\Gamma^{(m)}$. For the sake of simplicity, assume that each $Y_i^{(m)}$ is of variance $1$, that is, $\Gamma^{(m)}_{i,i}=1$ for all $i$.
Denote $\pnorm(z) = \P(Z\geq z)$, for $z\in \R$, $Z\sim\mathcal{N}(0,1)$, the upper tail distribution function of a standard Gaussian variable, 
and consider the empirical cumulative distribution function: 

\begin{equation}\label{equ-ecdf}
\G(t) = m^{-1} \sum_{i=1}^m  \ind{\pnorm(Y^{(m)}_i)\leq t},\:\:\:\mbox{ $t\in[0,1]$.}
\end{equation}
Here, we consider the e.d.f. of the $\pnorm(Y^{(m)}_i)$'s rather than the one of the $Y^{(m)}_i$'s to get uniformly distributed variables. %Following the multiple testing paradigm, 
The variables can therefore be interpreted as $p$-values, which is convenient for multiple testing, see Section~\ref{sec:appli}.
To study \eqref{equ-ecdf}, let us introduce the following quantities: 
  \begin{align}
  \gamma_m&=m^{-2}  \sum_{i\neq j} \Gamma_{i,j}^{(m)};\label{def:gammam}\\
  r_m&=\left(m^{-1} + \left| \gamma_m \right| \right)^{-1/2}.\label{rate}
  \end{align}
  In a nutshell, our main result is as follows: by assuming, when $m\rightarrow\infty$,
  \begin{align}
 \frac{r_m^2}{ m^{2}} \sum_{i\neq j} \left(\Gamma_{i,j}^{(m)}\right)^2 &\rightarrow 0\label{vanish-secondorder}\tag{\mbox{vanish-secondorder}};\\
 \frac{r_m^{4+\eps_0}}{m^2} \sum_{i\neq j} \left(\Gamma_{i,j}^{(m)}\right)^4 &\rightarrow 0, \:\:\:\mbox{ for some $\eps_0>0$;} \label{vanish-fourthorder}
 \tag{$H_1$}\\
 m \gamma_m &\rightarrow \theta ,\:\:\:\mbox{ for some $\theta\in [-1,+\infty]$;}\label{equtheta}\tag{$H_2$}
 \end{align}
the following weak convergence holds (in the Skorokhod topology): 
\begin{align}
r_m ( \G -  I)    \leadsto \Z  \label{aim}, \:\:\:\mbox{ as $m\rightarrow\infty$,}
  \end{align}
where $I(t)=t$ and $\Z$ is some continuous Gaussian process on $[0,1]$ with a distribution only function of $\theta$. Specifically, denoting $\phi$ the standard Gaussian 
 density,
\begin{itemize}
\item[(i)]  if $m\gamma_m \rightarrow \theta<+\infty$, we have $r_m\propto {m}^{1/2}$ and the process ${m}^{1/2}( \G - I)$ converges to a (continuous Gaussian) process with covariance function given by $(t,s)\mapsto t\wedge s -t s+ \theta \: \phi(\Phi^{-1}(t))\phi(\Phi^{-1}(s))$. Hence, the limit process is a standard Brownian bridge when $\theta=0$, but has a covariance function smaller (resp. larger)  if $\theta<0$ (resp. $\theta>0$).
\item[(ii)] if $m\gamma_m \rightarrow \theta=+\infty$, we have $r_m\sim (\gamma_m)^{-1/2} \ll {m}^{1/2}$ and $(\gamma_m)^{-1/2}( \G - I)$
converge to the process 
$\phi(\Phi^{-1}(\cdot)) Z$ for $Z\sim\mathcal{N}(0,1)$. Hence  the ``Brownian" part asymptotically disappears.
 \end{itemize}
The regimes (i) and (ii) are illustrated in Figure~\ref{fig:regimes}: as $m\gamma_m$ grows, the influence of the ``Brownian" part decreases while that of the  (randomly rescaled) function $\phi(\Phi^{-1}(\cdot))$ increases. Also, the scale of the $Y$-axis indicates that the ${m}^{1/2}$ is not a suitable rate for large values of $m\gamma_m$.

 \begin{figure}[h!]
\begin{center}
\begin{tabular}{cc}
$m\gamma_m=0$ & $m\gamma_m=2$\vspace{-.5cm}\\
\includegraphics[scale=0.35]{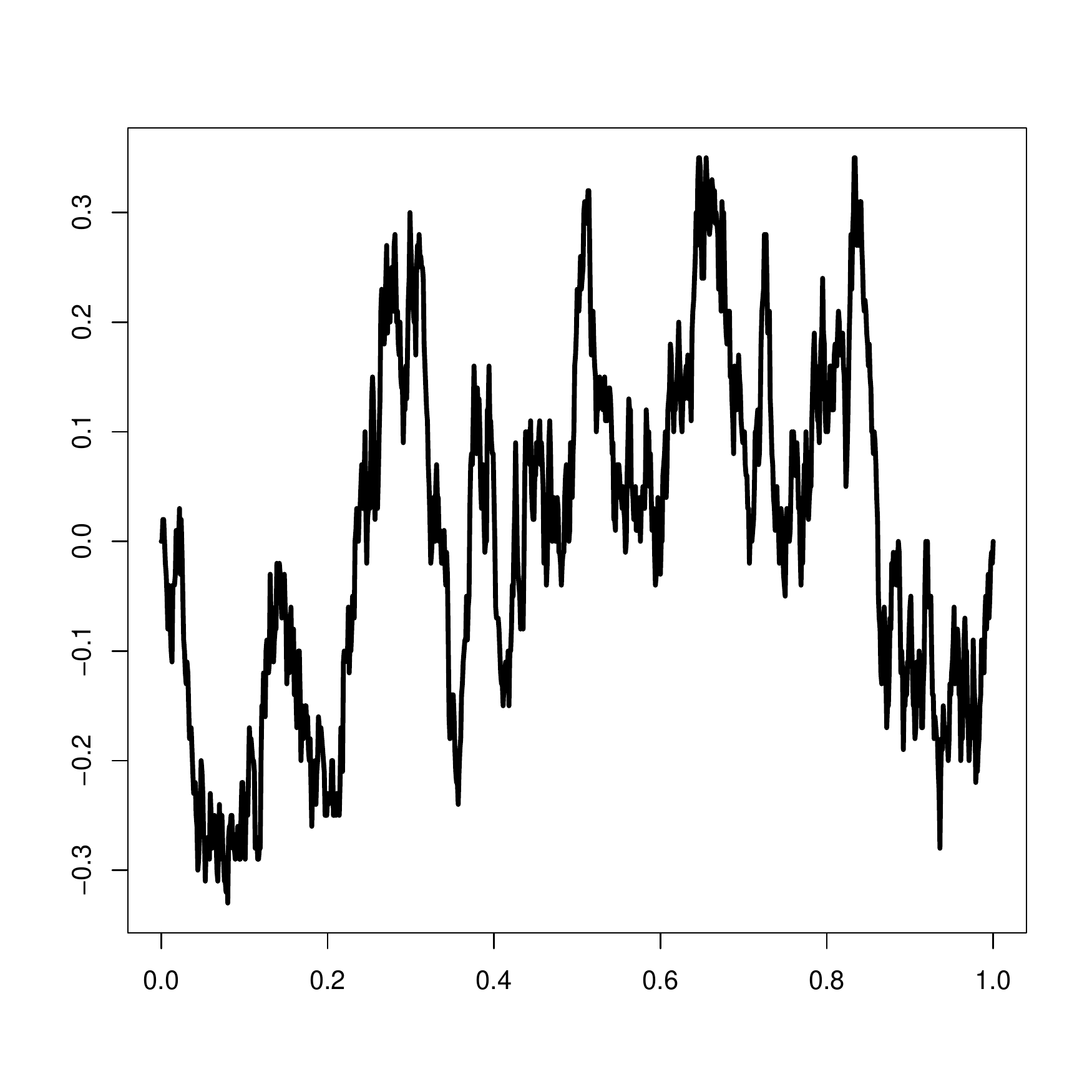} & \includegraphics[scale=0.35]{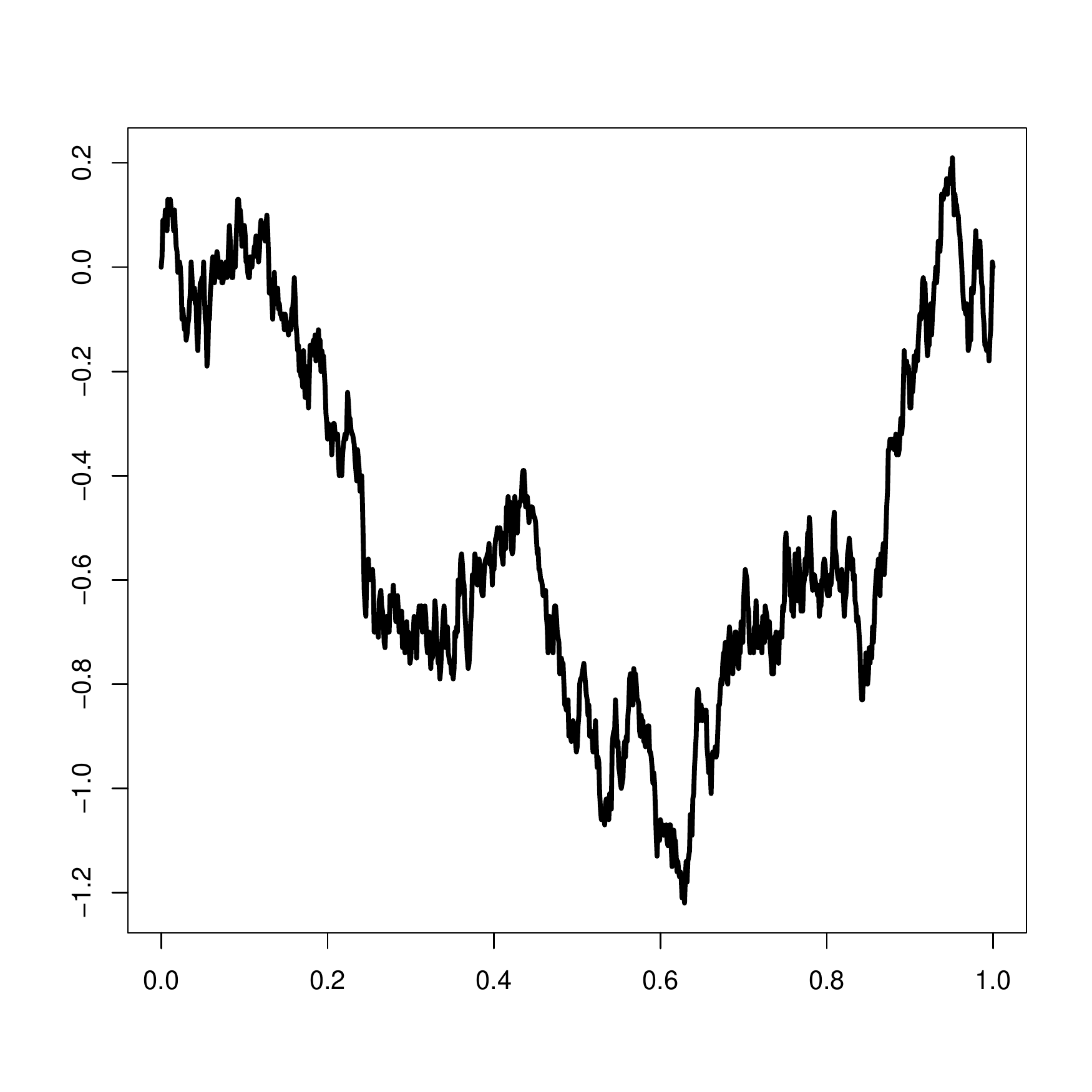}
\end{tabular}
\begin{tabular}{cc}
$m\gamma_m=10^2$&$m\gamma_m=10^3$\vspace{-.5cm}\\
\includegraphics[scale=0.35]{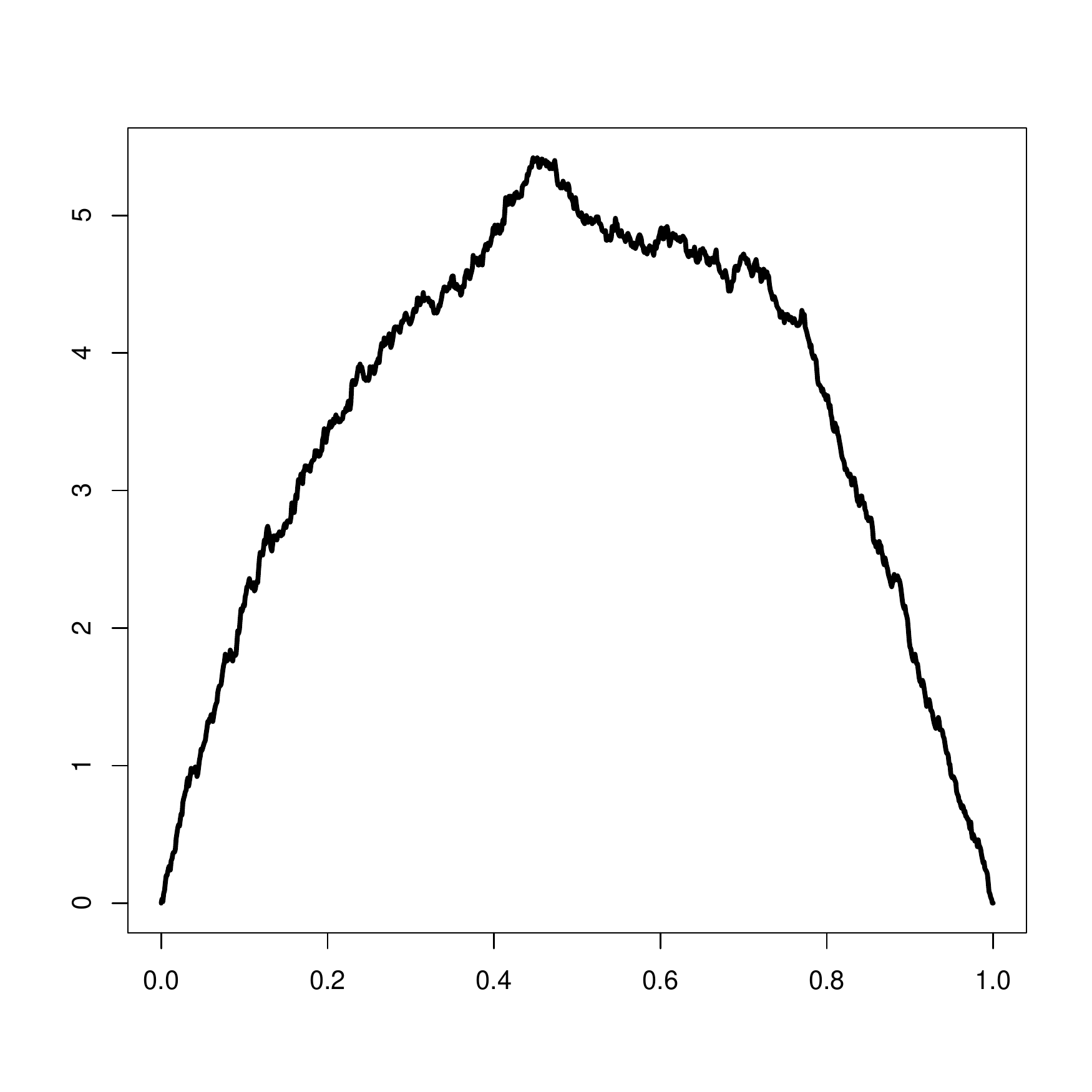}& \includegraphics[scale=0.35]{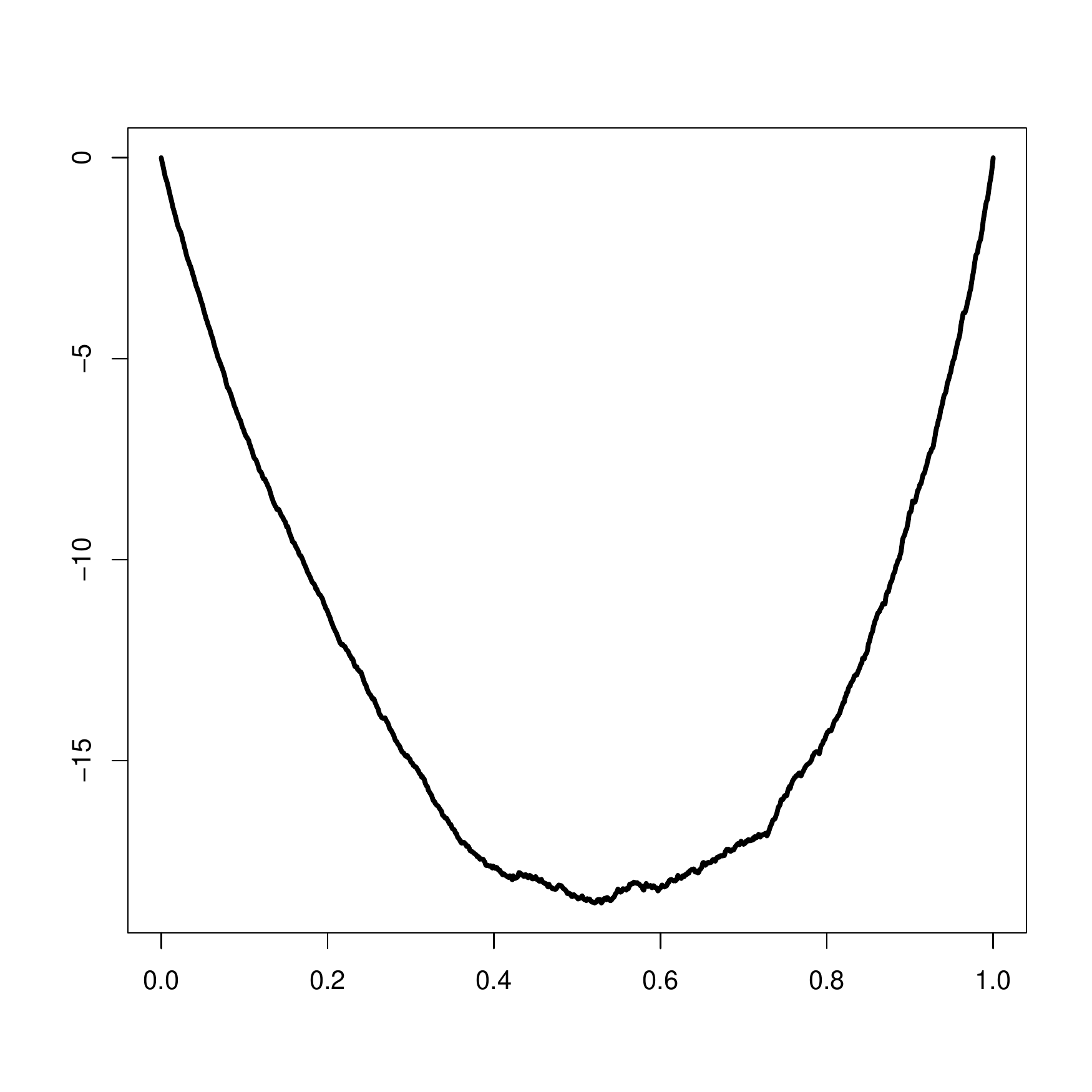}
\end{tabular}
\caption{Plot of $t\mapsto {m}^{1/2}( \G(t) - t)$ for some observed $Y(\omega)$. These realizations have been generated in the equi-correlated model $\Gamma^{(m)}_{i,j} =\rho_m$, $i\neq j$ (see \eqref{matrix:equi}) and for $m=10^4$.}
\label{fig:regimes}
\end{center}
\end{figure}

Let us briefly discuss our novel conditions. Condition~\eqref{vanish-secondorder} is the starting point of our study: it corresponds to assume that the expansion of the covariance function of $r_m ( \G - I)$ asymptotically stops at order $1$. This is a crucial $L^2$-type tool to elaborate our proofs in a possibly non-stationary regime. 
However, the price to pay is that it does not cover regimes where (some of) the greater orders matter asymptotically, as in the case of short range dependence (tridiagonal $1/2$-$1$-$1/2$ for instance).
As for Condition~\eqref{vanish-fourthorder}, it is only used to prove that $r_m ( \G - I)$ is $C$-tight and we suspect it to be unnecessary, athough we did not manage to remove it formally from our assumption set. 
Condition~\eqref{equtheta} is not restrictive because it holds up to consider a subsequence.

Finally,  we show that the convergence \eqref{aim} is maintained when replacing the set of assumptions \eqref{vanish-secondorder}, \eqref{vanish-fourthorder} and \eqref{equtheta} by the two following conditions:
\begin{align}
 \frac{r_m^{2+\eps_0}}{m^2} \sum_{i\neq j} \left(\Gamma_{i,j}^{(m)}\right)^2 =o(1), \:\:\:\mbox{ with $\eps_0>0$;} \label{vanish-secondorder+}
\tag{$H_3$} 
\\
 m\gamma_m^{1+\eps_0} \rightarrow +\infty, \:\:\:\mbox{ with $\eps_0>0$}\label{gammam+}\tag{$H_4$}.
 \end{align}
 Roughly speaking, it shows that, up to add some ``safety margin" $\eps_0$ in the convergence, Assumption~\eqref{vanish-fourthorder} can be removed in regime (ii).

  \subsection{Relation to existing literature} 
  
Compared to previous studies using the stationary paradigm, our assumptions are markedly different: first, the covariance matrix $\Gamma^{(m)}$ is allowed to depend on $m$, that is, the $Y^{(m)}$'s form a triangular array of Gaussian variables. 
Second, $\Gamma^{(m)}$ needs not be locally structured,
that is, $\Gamma^{(m)}_{i,j}$ is not necessarily related to the distance between $i$ and $j$.  
Instead, our conditions are permutation invariant, that is, are unchanged when permuting the columns of the triangular array. This is quite natural because the e.d.f. is itself permutation invariant. 
Third, %the correlations considered in our framework are possibly negative (although in a reasonable amount to keep the overall covariance matrix nonnegative) and 
our approach shows that the negative correlations can decrease the asymptotic covariance or even increase the convergence rate. 

As a counterpart, when restricted to the stationary setting, our assumptions are admittedly not optimal: 
it includes long-range of \cite{DT1989} but excludes short range of \cite{CM1996}. 
As explained above, this restriction comes from \eqref{vanish-secondorder}, which implicitly truncates the covariance expansion in the limit.

Nevertheless, our result opens a window for other dependence models as factor models or sample correlation matrices for instance. In particular, it covers the result of  \cite{DR2011}, obtained in the equi-correlated case where $\Gamma^{(m)}_{i,j} =\rho_m$, $i\neq j$, for some  correlation $\rho_m$ tending to zero (at some arbitrary rate).

Finally, let us mention the interesting work of \cite{BS2011} in which the stationarity assumption has also been removed, by establishing central limit theorems (CLT) for Gaussian subordinated arrays. There are two major differences with our approach: first, they deal with a CLT for the partial-sum process 
and not with a functional CLT for the e.d.f. Second, their assumptions are not of the same nature, because they require that $|\Gamma_{i,j}^{(m)}|\leq r(|i-j|)$ for all $i,j$, for some function $r(\cdot)$, independent of $m$, and vanishing at infinity.
    
  \subsection{Organization of the paper}

In Section~\ref{sec:convcov}, we study the covariance function of $\G$ under \eqref{vanish-secondorder}.
The main theorem is formally stated in Section~\ref{sec:mainresult} together with many illustrative examples. This new methodology is then applied to the multiple testing problem in Section~\ref{sec:appli}. 
 The proof of the main result is presented in Section~\ref{sec:proof}; it mainly relies on central limit theorems for martingale arrays and on a suitable  tightness criterion. To make the proof as clear as possible, some technical and auxiliary results are deferred to appendices.

\section{Preliminaries: covariance of $\G$ under \eqref{vanish-secondorder}.}\label{sec:prel}\label{sec:convcov}

Throughout the paper, to alleviate the notation, we will often denote $\P_m$ by $\P$, $Y^{(m)}$ by $Y$ and $\Gamma^{(m)}$ by $\Gamma$ when not ambiguous. 
  
Let us consider the sequence of Hermite polynomials $H_\l(x)$, $\l\geq 0$, $x\in \R$ (see Appendix~\ref{sec:Hermite}).  By using Melher's formula, the covariance function of the process $\G(\cdot)$ can be described as a function of the correlation matrix $\Gamma$ of $Y$. 

\begin{proposition}\label{prop:covG}
Consider $\G(\cdot)$ the process defined by \eqref{equ-ecdf} and the function family $\{c_\l(\cdot), \l\geq 1\}$ defined by 
 \begin{equation}\label{expan:clt} 
c_\l(t)=H_{\l-1}(\Phi^{-1}(t)) \phi(\Phi^{-1}(t)),\:\:\: \mbox{ $t\in[0,1]$, $\l = 1,2,\dots$.}
\end{equation}  
Then for all $t,s\in[0,1]$, we have
 \begin{align}
\cov(\G(t),\G(s))=&  \sum_{\l\geq 1} \frac{c_\l(t)c_{\l}(s)}{\l !} \left(m^{-2} \sum_{i,j}(\Gamma_{i,j})^\l\right).\label{equ:covG} 
\end{align} 
\end{proposition}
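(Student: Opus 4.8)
The plan is to expand the indicator functions defining $\G$ on the Hermite basis and then apply Mehler's formula coefficient by coefficient. First I would note that each $\Phi(Y^{(m)}_i)$ is uniform on $[0,1]$, so $\E[\G(t)]=t$, and write
\begin{equation*}
\cov(\G(t),\G(s)) = m^{-2}\sum_{i,j}\cov\!\left(\ind{\Phi(Y_i)\leq t},\,\ind{\Phi(Y_j)\leq s}\right).
\end{equation*}
Since $\Phi$ is the upper-tail function, the event $\{\Phi(x)\leq t\}$ equals $\{x\geq \Phi^{-1}(t)\}$, so the relevant function is $g_t(x)=\ind{x\geq\Phi^{-1}(t)}$, which lies in $L^2(\R,\phi(x)\,dx)$ for every $t$.

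Next I would compute the Hermite coefficients of $g_t$. Writing $g_t=\sum_{\l\geq 0}a_\l(t)H_\l$ with $a_\l(t)=(\l!)^{-1}\E[g_t(Z)H_\l(Z)]$ for $Z\sim\mathcal{N}(0,1)$, I would use the identity $H_\l(x)\phi(x)=-\tfrac{d}{dx}\!\left(H_{\l-1}(x)\phi(x)\right)$ and integrate from $\Phi^{-1}(t)$ to $+\infty$ to obtain, for every $\l\geq 1$,
\begin{equation*}
\l!\,a_\l(t)=\int_{\Phi^{-1}(t)}^{+\infty}H_\l(x)\phi(x)\,dx = H_{\l-1}(\Phi^{-1}(t))\,\phi(\Phi^{-1}(t)) = c_\l(t),
\end{equation*}
while $a_0(t)=\E[g_t(Z)]=t$. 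Thus the $c_\l(t)/\l!$ appearing in \eqref{equ:covG} are exactly the Hermite coefficients (for $\l\geq 1$) of the indicator $g_t$.

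I would then fix a pair $(i,j)$ and use that $(Y_i,Y_j)$ is centered Gaussian with standard marginals and correlation $\Gamma_{i,j}\in[-1,1]$. Mehler's formula yields the orthogonality relation $\E[H_\l(Y_i)H_k(Y_j)]=\delta_{\l,k}\,\l!\,(\Gamma_{i,j})^\l$, so that substituting the two Hermite expansions and integrating termwise gives
\begin{equation*}
\cov(g_t(Y_i),g_s(Y_j))=\sum_{\l\geq 1}a_\l(t)\,a_\l(s)\,\l!\,(\Gamma_{i,j})^\l=\sum_{\l\geq 1}\frac{c_\l(t)\,c_\l(s)}{\l!}(\Gamma_{i,j})^\l,
\end{equation*}
the $\l=0$ term vanishing because of the centering. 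Summing over $i,j$ (a \emph{finite} sum, so its interchange with the series in $\l$ is harmless) and dividing by $m^2$ produces \eqref{equ:covG}.

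The delicate point is the rigorous justification of the termwise integration in the pairwise covariance, i.e.\ interchanging the expectation with the two infinite Hermite sums. This amounts to controlling the bilinear Mehler kernel $\sum_{\l}\frac{\rho^\l}{\l!}H_\l(x)H_\l(y)$ and its convergence in $L^2(\phi\otimes\phi)$ for $|\rho|=|\Gamma_{i,j}|\leq 1$, with special care at the boundary $|\rho|=1$. The boundary case is unavoidable here since it occurs on the diagonal $i=j$, where $\Gamma_{i,i}=1$; there the computation reduces to Parseval's identity for $g_t,g_s\in L^2(\phi)$ and recovers $\cov(g_t(Y_i),g_s(Y_i))=t\wedge s-ts$. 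I would relegate this standard but careful verification, together with the recollection of Mehler's formula and the Hermite identities used above, to the appendix on Hermite polynomials referenced in the statement.
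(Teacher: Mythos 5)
Your proposal is correct and follows essentially the same route as the paper's proof: expansion of the indicator $\ind{\Phi(\cdot)\leq t}$ in the Hermite basis with coefficients $c_\l(t)/\l!$, followed by the orthogonality relation $\E(H_\l(U)H_{\l'}(V))=\delta_{\l,\l'}\,\l!\,(\cov(U,V))^\l$ stemming from Mehler's formula, the termwise computation being justified by $L^2$ convergence (the paper expands $\G(t)-t$ globally and then takes covariances of the two series, whereas you split into pairwise covariances first, a purely cosmetic difference). Your explicit verification that $\int_{\Phi^{-1}(t)}^{+\infty}H_\l(x)\phi(x)\,dx=H_{\l-1}(\Phi^{-1}(t))\phi(\Phi^{-1}(t))$ via the identity $H_\l(x)\phi(x)=-\frac{d}{dx}\left(H_{\l-1}(x)\phi(x)\right)$ is a detail the paper leaves implicit, and your treatment of the diagonal case $\Gamma_{i,i}=1$ by Parseval is sound.
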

This result can be found, e.g., in Theorem~2 of \cite{SL2011} (see also Theorem~1 of \citealp{Efron2009}). We provide a proof in Appendix~\ref{sec:Hermite} for completeness.  
          While \eqref{equ:covG} is an exact expression, we can try to approximate the covariance $\cov(\G(t),\G(s))$ when $m$ grows to infinity, while making some assumption on the matrix $\Gamma=\Gamma^{(m)}$.

    Firstly, let us note the following: since $m^{-2} \sum_{i,j}(\Gamma_{i,j})^\l= (\l!)^{-1} \var\left( m^{-1} \sum_{i=1}^m H_\l(Y_i)\right)\geq 0$ (by using \eqref{prop:hermite} in  Appendix~\ref{sec:Hermite}), expression \eqref{equ:covG} shows that the following conditions are equivalent as $m$ tends to infinity,
   \begin{align}
& \forall t \in [0,1], \:\var(\G(t)) = o(1) \label{consistency} \\
&   \forall \l\geq 1, \: m^{-2} \sum_{i,j}(\Gamma_{i,j})^\l = o(1)\label{weakdepforall}\\
 &  m^{-2} \sum_{i,j}(\Gamma_{i,j})^2 = o(1)\label{weakdep}\tag{\mbox{LLN-dep}}
\end{align} 
   
As a consequence, Condition~\eqref{weakdep} is required as soon as a convergence result of the form \eqref{aim} holds. Note that the rate $r_m$ defined by \eqref{rate} satisfies $1\leq \left(m^{-2} \sum_{i,j} (\Gamma_{i,j})^2 \right)^{-1/4} \leq r_m \leq \sqrt{m}$. Hence $r_m$ tends to infinity under \eqref{weakdep} but not faster than $\sqrt{m}$.
 
Secondly, let us rewrite \eqref{equ:covG} as follows:
 \begin{align}
\cov(\G(t),\G(s))=& \:m^{-1} (t\wedge s -ts) +\gamma_m  c_1(t)c_1(s)\nonumber\\
&+ \sum_{\l\geq 2} \left(m^{-2}\sum_{i\neq j } (\Gamma_{i,j})^\l\right) c_\l(t)c_\l(s) (\l !)^{-1},\label{equ:covG-detail} 
\end{align} 
where $\gamma_m$ is defined by \eqref{def:gammam}.
The latter holds because, for two independent $\mathcal{N}(0,1)$ variables $U$ and $V$, we have $m^{-1} \sum_{\l\geq 1} c_\l(t)c_{\l}(s) (\l !)^{-1} = $ $ \cov(\ind{\Phi(U)\leq t}, \ind{\Phi(V)\leq s})$.
In expansion \eqref{equ:covG-detail}, the second order term (i.e., the sum over $\l\geq 2$) is negligible w.r.t. the other terms if \eqref{vanish-secondorder} holds.
Hence, assuming now \eqref{vanish-secondorder}, 
we obtain that the rescaled covariance  $\cov(r_m \G(t),r_m \G(s))$ of $r_m \G$ converges to the following covariance function 
\begin{align}
K(t,s)&= \frac{1}{1+|\theta|} (t\wedge s -t s)+ \frac{\theta}{1+|\theta|} c_1(t) c_1(s),  \label{equ:kernel}
\end{align}
where 
$\theta$ is defined in \eqref{equtheta} and where we use the conventions $\theta/(1+|\theta|)=1$ and $1/(1+|\theta|)=0$ when $\theta=+\infty$. 
Note that \eqref{equtheta}  always holds up to consider a subsequence, because $m\gamma_m \geq -1$ from the nonnegativeness of $\Gamma^{(m)}$.

\begin{remark} 
%One should note that, 
In the RHS of expression \eqref{equ:kernel}, the second term is not necessarily a covariance function because $\theta$ can be negative. Nevertheless, $K$ can be written as
$K(t,s)= \frac{1}{1+|\theta|} \wt{K}(t,s)  + \frac{1+\theta}{1+|\theta|} c_1(t) c_1(s)$,
where 
\begin{align}
\wt{K}(t,s)=t\wedge s -t s -c_1(t) c_1(s)\label{equKmin}
\end{align}
turns out to be a covariance function; considering a Wiener process $(W_t)_{t\in[0,1]}$, $\wt{K}$ is the covariance function of the process $W_t - t W_1 - c_1(t) \int_0^1 \Phi^{-1}(s) dW_s$, which is the orthogonal projection in $L^2$ of $W_t$ onto the orthogonal of the linear space spanned by $W_1$ and $\int_0^1 \Phi^{-1}(s) dW_s$. Interestingly, the latter also shows that the original covariance $K$ given by \eqref{equ:kernel} can be seen as the covariance function of 
$
\Z_t = (1+|\theta|)^{-1/2} \left( W_t - t W_1 \right) +   (1+|\theta|)^{-1/2}((1+\theta)^{1/2}-1) \:c_1(t) \int_0^1 \Phi^{-1}(s) dW_s.$
\end{remark}

  \section{Main result}\label{sec:mainresult}

\subsection{Statement}

Our 
 main result establishes that the convergence of the covariance functions investigated in Section~\ref{sec:convcov} can be extended to the case of a weak convergence of process. For this, we should consider
 the other technical assumptions described in Section~\ref{sec:setting}.

\begin{theorem}\label{main-thm}
Let us consider the empirical distribution function $\G$ defined by \eqref{equ-ecdf}. Assume that the covariance matrix $\Gamma^{(m)}$ depends on $m$ in such a way that \eqref{vanish-secondorder} and \eqref{vanish-fourthorder} hold with $r_m$ defined by \eqref{rate} and assume \eqref{equtheta}. 
Consider $(\Z_t)_{t\in[0,1]}$ a continuous process with covariance function $K$ defined by \eqref{equ:kernel}.
Then we have the convergence (in the Skorokhod topology) 
\begin{align}
r_m ( \G - I)    \leadsto \Z  , \mbox{ as $m\rightarrow\infty$,}\label{resmainthm}
  \end{align}
where $I(t)=t$ denotes the identity function.
Moreover, the result holds by replacing the set of assumptions $\{$\eqref{vanish-secondorder}, \eqref{vanish-fourthorder} and \eqref{equtheta}$\}$ by $\{$\eqref{vanish-secondorder+} and \eqref{gammam+}$\}$.
\end{theorem}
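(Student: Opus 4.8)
The plan is to obtain the weak convergence \eqref{resmainthm} in the Skorokhod space $D[0,1]$ through the classical two-step route: convergence of the finite-dimensional marginals together with tightness. Since the candidate limit $\Z$ is continuous, it suffices to prove that the finite-dimensional laws of $r_m(\G-I)$ converge to those of $\Z$ and to verify a $C$-tightness criterion. The first ingredient is already half done: the computation of Section~\ref{sec:convcov}, culminating in \eqref{equ:covG-detail} and \eqref{equ:kernel}, shows that under \eqref{vanish-secondorder} and \eqref{equtheta} the rescaled covariances converge to $K$. Hence the remaining content is the asymptotic \emph{normality} of the marginals and the tightness.

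For the finite-dimensional convergence I would first use that $\E[\G(t)]=t$, so that $r_m(\G(t)-t)=\frac{r_m}{m}\sum_{i=1}^m g_t(Y_i)$ with $g_t(y)=\ind{\Phi(y)\le t}-t$ centered and \emph{bounded}, and expand it on the Hermite basis, $g_t=\sum_{\l\ge1}\frac{c_\l(t)}{\l!}H_\l$, with $c_\l$ as in \eqref{expan:clt}. By the Cramér--Wold device, fixing $t_1,\dots,t_k$ and weights $\lambda_1,\dots,\lambda_k$, it is enough to prove a scalar central limit theorem for $\frac{r_m}{m}\sum_{i=1}^m G(Y_i)$, where $G=\sum_j\lambda_j g_{t_j}=\sum_{\l\ge1}\frac{\beta_\l}{\l!}H_\l$ is a fixed bounded centered function with $\beta_\l=\sum_j\lambda_j c_\l(t_j)$. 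The degree-one component $\beta_1\frac{r_m}{m}\sum_{i=1}^m Y_i$ is \emph{exactly} Gaussian, being a linear functional of the Gaussian vector, with variance $\beta_1^2 r_m^2(m^{-1}+\gamma_m)\to\beta_1^2\frac{1+\theta}{1+|\theta|}$, so it needs no CLT. For the remaining degree $\ge2$ part I would invoke a central limit theorem for martingale arrays: writing $\mtc{A}_k=\sigma(Y_1,\dots,Y_k)$ and telescoping $\E[\,\cdot\mid\mtc{A}_k]-\E[\,\cdot\mid\mtc{A}_{k-1}]$ along this filtration yields a square-integrable martingale-difference array summing to that part. Convergence of its predictable quadratic variation to the right deterministic limit is exactly what \eqref{vanish-secondorder} buys: since $|\Gamma_{i,j}|\le1$, it forces $r_m^2 m^{-2}\sum_{i\ne j}(\Gamma_{i,j})^\l\to0$ for every $\l\ge2$, so the limiting variance reduces to the diagonal term isolated in \eqref{equ:covG-detail} and, together with the degree-one part, equals $\sum_{j,k}\lambda_j\lambda_k K(t_j,t_k)$. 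A conditional Lindeberg negligibility condition, which holds because the $g_{t_j}$ are bounded and the same second-order sums control the maximal jump, closes the martingale CLT; note that \eqref{vanish-fourthorder} is not needed here.

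The tightness step is where I expect the main difficulty, and it is the only place \eqref{vanish-fourthorder} should enter. I would use a moment criterion for $D[0,1]$ controlling the increments of $r_m(\G-I)$, aiming for a bound of the form $\E\brac{\paren{r_m\,\abs{\G(t)-\G(s)-(t-s)}}^{2}\paren{r_m\,\abs{\G(s)-\G(u)-(s-u)}}^{2}}\le C\,(F(t)-F(u))^{1+\delta}$ for $u\le s\le t$, some $\delta>0$ and some nondecreasing continuous $F$, which rules out jumps and yields $C$-tightness. The increments $g_t-g_s$ are bounded indicators of intervals, and expanding the fourth mixed moment produces sums over quadruples of indices weighted by products of $\Gamma_{i,j}$; the genuinely fourth-order, fully off-diagonal term is precisely of the size $\frac{r_m^{4+\eps_0}}{m^2}\sum_{i\ne j}(\Gamma_{i,j})^4$, so that assumption \eqref{vanish-fourthorder}, with its safety exponent $\eps_0$ on $r_m$, is exactly what makes this contribution negligible with a margin strong enough to push the increment exponent strictly above one, uniformly in $m$. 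The non-stationarity and the absence of any local structure of $\Gamma^{(m)}$ mean that no summation-by-parts or blocking argument is available, so the entire control must be carried out at the level of these index sums; securing the exponent $1+\delta$ uniformly in $m$ is the delicate point, and it is also why the authors flag \eqref{vanish-fourthorder} as probably removable.

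Finally, for the second set of assumptions I would argue directly in the regime $\theta=+\infty$: since \eqref{gammam+} forces $m\gamma_m\to+\infty$ and $\gamma_m>0$ eventually, one has $r_m\sim\gamma_m^{-1/2}$ and $r_m^2 m^{-1}=\frac{1}{1+m\gamma_m}\to0$, so the process is dominated by its degree-one part. I would write $r_m(\G-I)=c_1(\cdot)\,W_m+R_m$, where $W_m=\frac{r_m}{m}\sum_{i=1}^m Y_i$ is exactly $\mtc{N}(0,1)$ (its variance is $r_m^2(m^{-1}+\gamma_m)=1$) and $R_m$ collects the degree $\ge2$ contributions. Since $c_1(\cdot)\,W_m$ is tight --- a fixed continuous function times a single standard Gaussian variable --- it suffices to prove $\norm{R_m}_\infty\to0$ in probability. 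Pointwise, $\var(R_m(t))$ vanishes (its diagonal part is $O((1+m\gamma_m)^{-1})$ and its off-diagonal part is controlled by \eqref{vanish-secondorder}, which \eqref{vanish-secondorder+} implies); upgrading this to a uniform bound via a maximal inequality and chaining over $t\in[0,1]$ requires control of the increments of $R_m$, and the strengthened \eqref{vanish-secondorder+} supplies exactly the extra exponent $\eps_0$ needed for that, thereby bypassing \eqref{vanish-fourthorder} altogether.
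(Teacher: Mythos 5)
Your overall architecture --- finite-dimensional convergence plus tightness, with the degree-one Hermite component split off as exactly Gaussian, a martingale-array CLT for the degree $\geq 2$ part, fourth-moment increment bounds under \eqref{vanish-fourthorder}, and a separate second-moment argument under \eqref{vanish-secondorder+} and \eqref{gammam+} --- coincides with the paper's (your split is exactly the paper's decomposition \eqref{rel-G-Gtilde}). But two genuine gaps remain. The first is in the finite-dimensional step: you conclude by \emph{adding} the variance of the exactly-Gaussian linear term to the limiting variance delivered by the martingale CLT. That addition is only legitimate if you establish \emph{joint} convergence of the pair (linear part, degree $\geq 2$ part). Hermite orthogonality gives exactly zero covariance, but zero covariance between a Gaussian variable and a variable that is merely \emph{asymptotically} Gaussian does not imply that their sum is asymptotically Gaussian, because the pair is not jointly Gaussian at finite $m$. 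This is precisely where the paper spends its heaviest machinery: the martingale CLT of Proposition~\ref{propGtilde} is proved in the \emph{stable} sense, jointly with a coordinate that is held fixed as $m$ varies (which forces the construction of a common probability space for the whole triangular array), and Proposition~\ref{propGchap} then augments the Gaussian vector with the extra coordinate $Y_0=\ol{Y}_m(\var \ol{Y}_m)^{-1/2}$, checks via Lemma~\ref{lemmalambda} that the enlarged $(m+1)\times(m+1)$ covariance matrix still satisfies \eqref{vanish-secondorder} and \eqref{equtheta}, and only then obtains the asymptotic independence of $r_m(\Gtilde-I)$ and $r_m\ol{Y}_m$. Nothing in your sketch produces this joint law. (Relatedly, your Doob-type telescoping has no compensator, but its increments contain anticipating conditional expectations; the paper's term-by-term variant must instead prove the compensator negligible, which it can only do under the eigenvalue condition \eqref{eigenvalues}, later removed by an $\eps$-perturbation of $Y$ --- hidden work your route would meet in some form as well.)

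The second gap is in tightness, and you flagged the spot yourself. The bound you aim for, a Billingsley-type product bound $\leq C\,(F(t)-F(u))^{1+\delta}$ with $C$ uniform in $m$, is not what the moment computations can deliver, and \eqref{vanish-fourthorder} is not used to ``push the exponent above one''. In the expansion of the fourth moment of an increment, index tuples in which some pairwise correlation $|\Gamma_{i,j}|$ exceeds a fixed threshold can only be bounded crudely, and they contribute terms of order exactly $|t-s|$, e.g. $\big(\frac{r_m^4}{m^2}\sum_{i\neq j}|\Gamma_{i,j}|^4\big)|t-s|$ as in \eqref{majT31}. Your two-increment trick does neutralize the pure diagonal term (one sample point cannot fall in two disjoint intervals), but it does not fix these large-correlation tuples: Cauchy--Schwarz throws you straight back to exponent one. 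The margin $\eps_0$ in \eqref{vanish-fourthorder} serves to make the \emph{coefficients} of such exponent-one terms vanish at rate $r_m^{-\eps_0}$, not to raise the exponent; this is why the paper proves the weaker bound \eqref{tight-crit-applied}, namely $\E|X_m(t)-X_m(s)|^4\leq C(|t-s|^{3/2}+r_m^{-\eps_0}|t-s|)$, which fails the standard criterion, and then completes the argument with the bespoke tightness criterion of Proposition~\ref{prop:tight}: its condition \eqref{tight-crit} explicitly tolerates an extra term $a_m^{-\delta_2/q}|t-s|^{q'}$ with exponent $q'\leq 1$ but vanishing coefficient, and its proof is a chaining argument over an $m$-dependent grid exploiting monotonicity of the e.d.f. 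Without this criterion (or an equivalent substitute) your tightness step stalls; the same remark applies to your chaining claim under \eqref{vanish-secondorder+} and \eqref{gammam+}, where the paper's second-moment bound \eqref{tight-crit-applied2} again has exponent one with coefficient $\gamma_m^{\delta_0}$.
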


Theorem~\ref{main-thm} is illustrated in the next section, which provides several (commented) examples. 

\subsection{Examples}\label{sec:examples}

Let us first note that Assumptions \eqref{vanish-secondorder} and \eqref{vanish-fourthorder} always hold under the following condition
\begin{align}\label{assumeasy}
  \mbox{$|\Gamma^{(m)}_{i,j}|\leq a_m $ for all $i\neq j$ and $a_m$ satisfies $m^{1+\delta} a_m^2 \rightarrow 0$ for some $\delta>0$.}
\end{align}
Also remember that, as mentioned in Section~\ref{sec:setting}, regime (i) (resp. (ii)) referred to the case where $\theta<\infty$ (resp. $\theta=\infty$).
We  now give several types of matrix $\Gamma^{m}$ for which Theorem~\ref{main-thm} can be applied. 

 \paragraph{Equi-correlation}
 
Let us start with the following simple example:
\begin{align}\label{matrix:equi}
\Gamma^{(m)} = \left(\begin{array}{cccc} 1 & \rho_m & \dots & \rho_m\\\rho_m & \ddots & \ddots & \vdots \\ \vdots & \ddots & \ddots & \rho_m\\ \rho_m & \dots & \rho_m & 1\end{array}\right) = (1-\rho_m) I_m + \rho_m \left(\begin{array}{c} 1 \\\vdots \\  \vdots \\ 1\end{array}\right)\left(\begin{array}{c} 1 \\\vdots \\  \vdots \\ 1\end{array}\right)^T,
\end{align} 
where $\rho_m \in[ -(m-1)^{-1},1]$ is some parameter. 
We easily check that $\gamma_m$ defined by \eqref{def:gammam} is given by $m\gamma_m=(m-1) \rho_m$ and that the assumptions of Theorem~\ref{main-thm} are all satisfied if $\rho_m\rightarrow 0$ and $m\rho_m$ converges to some $\theta\in[-1,+\infty]$, 
which yields convergence \eqref{resmainthm}.
This is in accordance with Lemma~3.3 of \cite{DR2011}. 

This simple example already shows that, following the choice of the sequence $(\rho_m)_m$, 
the empirical distribution function can have various asymptotic behaviors. For instance, taking $\rho_m=-(m-1)^{-1}$ gives a  process in regime (i) with a minimal asymptotic covariance function ($\theta=-1$, see \eqref{equKmin}), while taking $\rho_m \sim m^{-2/3}$ leads to a rate $r_m\sim m^{1/3}\ll m^{1/2}$ and thus a process converging in regime (ii).
 
 \paragraph{Alternate equi-correlation}
 
 Let us consider the covariance matrix: 
\begin{align}
\Gamma^{(m)} = \left(\begin{array}{ccccc} 1 & -\rho_m & \rho_m & \dots & \\
-\rho_m & 1 & -\rho_m & \ddots& \vdots \\ 
\rho_m & \ddots & \ddots &\ddots& \rho_m \\ 
\vdots & \ddots & -\rho_m & 1& -\rho_m\\ 
 & \dots & \rho_m& -\rho_m & 1\end{array}\right) 
 = (1-\rho_m) I_m + \rho_m \left(\begin{array}{c} 1 \\ -1 \\  \vdots \\ 1\end{array}\right)\left(\begin{array}{c} 1 \\ -1 \\  \vdots \\ 1\end{array}\right)^T
,\label{matrix:alternate-equi}
\end{align}
where $\rho_m \in[ -(m-1)^{-1},1]$ is a given parameter.  Clearly, $\gamma_m$  is such that 
\begin{align*}
m\gamma_m&= 2 \rho_m m^{-1}\sum_{i=1}^{m-1} \sum_{k=1}^{m-i} (-1)^k =\rho_m  \lfloor m/2\rfloor/(m/2)
\end{align*}
Hence the rate $r_m$ defined by \eqref{rate} is $r_m\sim \sqrt{m}$ and assumptions of Theorem~\ref{main-thm}  are fulfilled (with $\theta=0$) by assuming that $m^{1+\delta} \rho_m^2 \rightarrow 0$, with $\delta>0$ (because \eqref{assumeasy} holds). Hence, under that assumption,  
$\sqrt{m}(\G-I)$ converges to a standard Brownian bridge.

Maybe surprisingly,
this example shows that, even if the correlations are ``strong" (e.g., $\rho_m \sim m^{-2/3}$, to be compared with the equi-correlated case), positive and negative correlations can exactly compensate each other to provide the same convergence result as under independence. 

\paragraph{Long-range stationary correlations}

Let us consider the  correlation matrix of the following form:  
\begin{equation}\Gamma^{(m)}_{i,j}=r(|j-i|),\:\:\: \mbox{ for } r(0)=1, \:\:r(k)=k^{-D} L(k),\:\:\: 0<D<1,\label{matrix:vanish-long-range}
\end{equation}
where $L:(0,+\infty)\rightarrow (0,+\infty)$ is slowly varying at infinity ($\forall t>0$, $ L(t x)\sim L(x)$ as $x\rightarrow +\infty$). This framework is often referred to as ``long-range dependence" in literature dealing with a stationary setup (see, e.g., \cite{DT1989,DLS2002}). First, standard calculations easily  show that for all $\nu\geq 0$,
  \begin{align}
m^{-1}\sum_{i\neq j}  |j-i|^{-\nu} = 2  m^{-1} \sum_{i=1}^{m-1} \sum_{k=1}^{i} k^{-\nu}  \sim \left\{ \begin{array}{cl} 2 \frac{ m^{1-\nu}}{(1-\nu)(2-\nu)} & \mbox{ if $\nu\in  [0,1)$}\\ 2\log m & \mbox{ if $\nu=1$}\\ 2\sum_{k\geq 1} k^{-\nu} & \mbox{ if $\nu>1$}\\\end{array} \right..\label{easycomput}
\end{align}
Thus, for any 
$\nu_1\in (D,1)$, since $L$ is slowly varying,
  \begin{align*}
m\gamma_m \gtrsim m^{-1}\sum_{i\neq j}  |j-i|^{-\nu_1} \gtrsim m^{1-\nu_1}   ,
\end{align*}
by applying \eqref{easycomput}, where the ``$u_m\lesssim v_m$" means $u_m=O(v_m)$. 
This entails $m \gamma_m^{1+(1-\nu_1)/(2\nu_1)} \gtrsim m^{(1-\nu_1)/2}$  and thus Assumption \eqref{gammam+} holds. In particular, $r_m\sim \gamma_m^{-1/2}$. Additionally, for any $\nu_2\in (D,1)$ and $\nu_3\in(0,2D)$ such that $\nu_3/\nu_2>1$, by applying  again \eqref{easycomput},
$$
\gamma_m^{-\delta} m^{-2} \sum_{i\neq j} (\Gamma_{i,j})^2 \lesssim m^{\nu_1\delta} m^{-2}\sum_{i\neq j}  |j-i|^{-\nu_3} \lesssim m^{\nu_2\delta - \nu_3} \vee ( m^{\nu_2\delta-1} \log m)
$$
for any $\delta>0$. We derive \eqref{vanish-secondorder+} because taking $\delta>1$ such that $\delta<\nu_3/\nu_2$ and $\delta<1/\nu_2$ is possible. By using Theorem~\ref{main-thm} under \eqref{assumeasy}, 
 we derive 
\begin{align*}
\gamma_m^{-1/2} ( \G - I)    \leadsto c_1(\cdot) Z  , \mbox{ as $m\rightarrow\infty$,}
  \end{align*}
for $Z\sim\mathcal{N}(0,1)$. This is in accordance with Theorem~1.1 of \cite{DT1989} (see in particular Example~1 therein).

Finally, let us note that Assumption \eqref{vanish-secondorder} of Theorem~\ref{main-thm} is not satisfied for a covariance matrix of the type \eqref{matrix:vanish-long-range} taken with $D\geq 1$ (short-range) (the other terms in the covariance expansion \eqref{equ:covG-detail} are required in the limit, see \cite{CM1996}).  

\paragraph{Weak short/long range correlations}

Let us modify slightly the matrix \eqref{matrix:vanish-long-range}, by letting: 
\begin{equation}\Gamma^{(m)}_{i,j}=\rho_m \:r(|j-i|),\:\:\: \mbox{ for } r(0)=1, \:\:r(k)=k^{-D} ,\:\:\: D> 0,\label{matrix:vanish-range}
\end{equation}
where  $\rho_m$ is some nonnegative parameter (we removed the slowly varying function for the sake of simplicity).  When $\rho_m$ varies in function of $m$, note that the latter is not of the stationary type.
 From \eqref{easycomput}, we have
  \begin{align}
 m\gamma_m \sim \left\{ \begin{array}{cl} 2\rho_m \frac{ m^{1-D}}{(1-D)(2-D)} & \mbox{ if $D\in  [0,1)$}\\ 2\rho_m\log m & \mbox{ if $D=1$}\\2\rho_m \sum_{k\geq 1} k^{-D} & \mbox{ if $D>1$}\\\end{array} \right.\label{thetavanish}
\end{align}
Assuming that the quantity \eqref{thetavanish} as a limit (denoted $\theta$) and that $\rho_m\to 0$ as $m$ grows to infinity, \eqref{vanish-secondorder} and \eqref{vanish-fourthorder} hold if $m^{1+\delta} \rho_m^2 \rightarrow 0$ with $\delta>0$ (because \eqref{assumeasy} holds).
The resulting rate of convergence $r_m$ is given as a function of $D$ and $\rho_m$ in Table~\ref{table:vanish-long-range}. 
Markedly, weak short-range correlations ($D>1$) always yields $r_m\sim m^{1/2}$ while weak long-range correlations ($D<1$) can give both regimes. For instance, taking $\rho_m \sim m^{-2/3}$ yields $r_m\sim m^{D/2+1/3}$ for $D<1/3$ and $r_m\sim m^{1/2}$ otherwise.
Overall, the convergence rate increases with $D$. 

 \begin{table}[h!]
\begin{tabular}{|l|l|l|}
 \hline
 &&\\
  & $D\in[0,1)$  & $D\geq 1$\\
 &&\\\hline
  &&\\
 & $\rho_m  m^{1-D} =O(1)$ & $\theta=0$  \\
$\theta<\infty$ & $r_m \sim \sqrt{m}$   &	 $r_m\sim\sqrt{m}$\\
  &&\\\hline
  &&\\
 & $\rho_m  m^{1-D} \rightarrow \infty$ & \\
$\theta=\infty$ & $r_m\sim \rho_m^{-1/2} m^{D/2}$     &  	 not possible \\
&&\\\hline
  \end{tabular}
\caption{Rate $r_m$ defined by \eqref{rate} in function of $D\geq 0$ and $\rho_m$ such that $\rho_m=o(m^{-(1/2+\delta)})$ for some $\delta>0$, for the particular covariance \eqref{matrix:vanish-range}.
\label{table:vanish-long-range}}
\end{table}

\paragraph{Vanishing factor model}

``Spiked" covariance matrix has been introduced in \cite{John2001}. It assumes that the $k$-first eigenvalues of the covariance matrix are greater than $1$ (for some fixed value of $k$) while the other are all equal to $1$. In our setting where we consider only \textit{correlation} matrices, 
we assume that the sequence of eigenvalues is constant after some fixed rank $k$. Precisely, let us consider a matrix $\Gamma^{(m)}$ of the following form:  
\begin{align}
\Gamma^{(m)} = (1-\rho_{m}) I_m + \rho_{m} P H P^T, \label{matrix:spike}
\end{align}
where $H$ is a $k\times k$ diagonal matrix with diagonal entries $h^{(m)}_1, \dots, h^{(m)}_k \in (1,\infty)$, where $P=(p^{(m)}_{i,r})_{1\leq i \leq m, 1\leq r\leq k}$ is an $m\times k$ matrix such that $P^TP =I_k$ and where $\rho_m\in[-1,1]$ is some parameter. Importantly, $k$ is taken fixed with $m$. The $k$ first eigenvalues of $\Gamma^{(m)} $ are thus given by $1-\rho_m + \rho_m h_r^{(m)}$, $r=1,\dots, k$, while the remaining eigenvalues are all equal to $1-\rho_m$.
Hence, to ensure that $\Gamma^{(m)}$ given by \eqref{matrix:spike} is a well defined correlation matrix, we should additionally assume that for all $r=1,\dots, k$,  $1-\rho_m + \rho_m h_r^{(m)} \geq 0$, 
and that $P H P^T$ has diagonal entries equal to $1$, that is, for all $ i =1,\dots,  m$, $\sum_{r=1}^k h^{(m)}_r (p^{(m)}_{i,r})^2 = 1$. Note that the latter requires $h^{(m)}_1 + \dots + h^{(m)}_k=m$ and thus $\max_r \{ h_r^{(m)} \} \geq m/k$.

Next, by using \eqref{matrix:spike}, the conditions above and some properties of the Frobenius norm, we can derive the following:
\begin{align}
m\gamma_m
&= \rho_m \sum_{r=1}^k h^{(m)}_r \left( m^{-1/2} \sum_{i=1}^m p^{(m)}_{i,r} \right)^2 -\rho_m; \label{equinter1}\\
m^{-2} \sum_{i\neq j} \left(\Gamma^{(m)}_{i,j}\right)^2 &= \rho^2_m  \left(  \sum_{r=1}^k  (h^{(m)}_r/m)^2 -1/m \right) . \label{equinter2}
\end{align}
Since the RHS of \eqref{equinter2} is upper-bounded by $\rho_m^2(k -m^{-1})$ and lower-bounded by $ \rho_m^2(k^{-2} -m^{-1})$ and since $k$ is taken fixed with $m$, condition \eqref{weakdep} is satisfied if and only if $\rho_m\to 0$ while \eqref{vanish-secondorder} holds if and only if $r_m \rho_m \to 0$.
Additionally, we have
\begin{align*}
m^{-2} \sum_{i\neq j} \left(\Gamma^{(m)}_{i,j}\right)^4 &= \rho^4_m  \left( m^{-2} \sum_{r_1,\dots,r_4} h_{r_1} \dots h_{r_4} \left(\sum_{i=1}^m p_{i,r_1} \dots p_{i,r_4}\right)^2 -1/m \right)\\
&\leq \rho^4_m  \left( m^{-2} \left(\sum_{r_1,r_2} h_{r_1} h_{r_2} \sum_{i=1}^m p^2_{i,r_1}p^2_{i,r_2} \right)^2 -1/m \right)\\
&= \rho^4_m (1-1/m),
\end{align*}
where we used the Cauchy-Schwartz inequality (we dropped the dependence in $m$ in the notation for short). Finally, the assumption of 
Theorem~\ref{main-thm} are all fulfilled provided that 
\begin{align}\label{condition-factormodel}
\mbox{$r_m^{2+\delta} \rho^2_m \to 0$ with $\delta>0$}
\end{align} 
(up to consider a subsequence making the quantity into \eqref{equinter1} converges to some $\theta$). In \eqref{condition-factormodel}, the rate $r_m$ can be computed by using the definition, see \eqref{rate}, or expression \eqref{equinter1}. The rate of convergence thus intrinsically  depends on the asymptotic behavior of the coordinate-wise mean of each eigenvector $(p_{i,r}^{(m)})_{1\leq i\leq m}$ . 

To further illustrate this example, we can focus on the particular case where $k=1$.  In that case, the model can be equivalently written  as
\begin{align}
\Gamma^{(m)} = (1-\rho_{m}) I_m + \rho_{m} \xi \xi^T, \label{matrix:spike:kequal1}
\end{align}
where $\xi=\xi^{(m)}$ is a $m\times 1$ vector in $\{-1,1\}^m$ and where $\rho_m \in[ -(m-1)^{-1},1]$.
The model  \eqref{matrix:spike:kequal1} contains as particular instances the equicorrelated matrix ($\xi^{(m)}=(1\: 1 \cdots 1)^T$) and the alternate equicorrelated matrix ($\xi^{(m)}=(1\: -1\: 1 \cdots )^T$) that we have studied above. We easily check that condition \eqref{condition-factormodel} recovers the conditions that we obtained in each of theses particular cases.
In general, for an arbitrary $\xi^{(m)} \in \{-1,1\}^m$, since the quantity in \eqref{equinter1} is equal to
\begin{align}
\rho_m \left( m^{-1/2} \sum_{i=1}^m \xi^{(m)}_{i} \right)^2 -\rho_m,\label{quantityrandom}
\end{align}
the rate $r_m$ is directly related to the number of $-1$ and $+1$ into $\xi^{(m)}$. For instance, if $\xi^{(m)}=(U_1,\dots, U_m)$ where  $U_1,U_2,\dots$ are i.i.d. random signs, we have by the central limit theorem that the quantity \eqref{quantityrandom} tends to $0$ (in probability) whenever $\rho_m \to 0$, which gives a rate $r_m \sim \sqrt{m}$ (in probability).  Hence, we obtain the convergence \eqref{resmainthm} with the same rate and asymptotic variance as in the independent case whenever $m^{1+\delta}\rho_m^2\to 0$ with $\delta>0$.

  \paragraph{Sample correlation matrix}

We consider the model where the correlation matrix is generated \textit{a priori} as a Gaussian empirical correlation matrix. Namely, let us assume that 
\begin{align}
\Gamma^{(m)} = D^{-1} S D^{-1},\mbox{ for } S= n_m^{-1}  X^T X \mbox{ and } D= \mbox{diag}(S_{1,1}, \cdots, S_{m,m})^{1/2} \label{matrix:emp}
\end{align}
where $X$ is a $n_m\times m$ matrix with i.i.d. $\mathcal{N}(0,1)$ entries. Assume $m/n_m \rightarrow 0$ as $m$ tends to infinity, which, in a statistical setup, corresponds to assume that the number $m$ of variables (columns of $X$) is of smaller order than the sample size $n_m$.

A by-product of Theorem~2 in \cite{BY1993} (adding a number of variables which is a vanishing small proportion of the sample size) is that, 
\begin{equation*}
|| S -I_m ||_2 \xrightarrow{P} 0,
\end{equation*}
where $||\cdot||_2$ denotes the Euclidian-operator norm, that is, $||S  -I_m||_2=\max_{1\leq i\leq m} |\lambda^{(m)}_i-1|$ and $\lambda_1^{(m)},\dots,\lambda_m^{(m)}$ denote the eigenvalues of  $S$. Hence, $\max_{1\leq i\leq m} | S_{i,i}-1|\xrightarrow{P} 0$, which in turn implies $|| \Gamma^{(m)}  -I_m ||_2 \xrightarrow{P} 0$.
Next, simple arguments entail the following inequalities: 
\begin{align*}
\left|m^{-1} \sum_{i\neq j} \Gamma^{(m)}_{i,j}\right| &= m^{-1} |< (1\cdots 1)^T , (\Gamma^{(m)}-I_m) (1\cdots 1)^T>| \leq ||\Gamma^{(m)}  -I_m||_2; 
\\
r_m^2 m^{-2} \sum_{i\neq j} \left(\Gamma^{(m)}_{i,j}\right)^2 &\leq m^{-1}\sum_{i=1}^m (\lambda_i^{(m)}-1)^2\leq  ||\Gamma^{(m)}  -I_m||_2^2 ;
\\
r_m^{4+\eps_0} m^{-2} \sum_{i\neq j} \left(\Gamma^{(m)}_{i,j}\right)^4 &\leq  \left\{\min_{1\leq i\leq n_m} |S_{i,i}|\right\}^{-4} m^{\eps_0/2} \sum_{i\neq j} (S_{i,j})^4 .
\end{align*}
Moreover, we easily check that $\E (n_m^{1/2} S_{i,j})^4= \E \left( n_m^{-1/2} \sum_{k=1}^{n_m} X_{k,i} X_{k,j}\right)^4$ is upper bounded by some positive constant. Hence, by assuming that the sequence $n_m$ satisfies
$$
\mbox{$m^{1+\delta}/n_m \rightarrow 0$ for some $\delta>0$},
$$
the above inequalities implies  
that the rate is $r_m\sim \sqrt{m}$, that   \eqref{equtheta} holds  with $\theta=0$ and that \eqref{vanish-secondorder} and \eqref{vanish-fourthorder} are satisfied (all these convergences holding in probability).  Hence, Theorem~\ref{main-thm} can be applied and this shows that the asymptotic of the empirical distribution function is  the same as under independence.

\section{Application to multiple testing}\label{sec:applistat}\label{sec:appli}

\subsection{The curse of dependence}

The so-called ``Benjamini and Hochberg procedure" (BH procedure), widely popularized after the celebrated paper \cite{BH1995}, is often given as the default procedure to provide a false discovery proportion (FDP) close to some pre-specified error level $\alpha$. More specifically, the BH procedure  provides that  the \textit{expectation} of the FDP, called the false discovery rate (FDR), is bounded by $\alpha$ under independence of the tests (and also for some type of positive dependence, see \cite{BY2001}). Furthermore, many authors reported that the FDR of the BH procedure is  essentially unaffected by dependencies, see, e.g., \cite{Far2006,KW2008}. It is therefore tempting to conclude that the BH procedure works whatever the dependencies are. 
However, as noticed by Lehmann and Romano: 
``\textit{control of the FDR does not prohibit the FDP from varying, even if its average value is bounded}", see \cite{LR2005}. In addition, some authors have exhibited that the distribution of the FDP of BH can be wide spread in a particular (unrealistic) equi-correlated framework, by using simulations, see, e.g., Table~2 in \cite{Korn2004} and by using a theoretical study, see \cite{DR2011}.
The present work brings a broad theoretical support for this, by showing that the distribution of the FDP of BH procedure is widening as the quantity $\gamma_m$ defined by \eqref{def:gammam} grows. 

The formal link between the FDP, the BH procedure and e.d.f.'s has been delineated in \cite{GW2004,Far2007} (FDP at a fixed threshold) and consolidated later in \cite{Neu2008} (FDP at BH threshold). 
Here, we follow the approach of \cite{Neu2008}, by using that the FDP of BH procedure is a Hadamard differentiable function of (rescaled) empirical distribution functions. Convergence results are thus derived from Theorem~\ref{main-thm} by applying the (partial) functional delta method, see Proposition~\ref{prop:FDM}.

\subsection{Two-group model, FDP and BH procedure}\label{sec:MTsetting}

 Let us add to the original vector $Y\sim \mathcal{N}(0,\Gamma)$ an unknown vector $H=(H_i)_{1\leq i \leq m}\in\{0,1\}^m$  as follows: for $1\leq i \leq m$,
\begin{align}\label{model}
X_i =  \delta H_i + Y_i ,
\end{align}
for some positive number $\delta$ (assumed to be fixed with $m$). Hence $X\sim \mathcal{N}(\delta H,\Gamma)$.
Now consider the statistical problem of finding $H$  from the observation of $X=(X_i)_{1\leq i \leq m}$. 
From an intuitive point of view, $H$ is the ``signal" (unknown parameter of interest), $Y$ is the ``noise" (unobserved) while $\Gamma$ and $\delta$ are ``nuisance" parameters, generally assumed to be unknown.

Let us define the following e.d.f.'s: for $t\in[0,1]$,
\begin{align}
\Gzero(t)&=m_0^{-1} \sum_{i=1}^m (1-H_i)\ind{\Phi(X_i)\leq t};\label{processF0}\\
\Gone(t)&=m_1^{-1} \sum_{i=1}^m H_i\ind{\Phi(X_i)\leq t}\label{processF1};\\
\Gchap(t)&=m^{-1} \sum_{i=1}^m \ind{\Phi(X_i)\leq t}=\frac{m_0}{m}\Gzero(t)+\frac{m_1}{m}\Gone(t)\label{processG},
\end{align}
where $m_0=\sum_{i=1}^m (1-H_i)$ and $m_1=\sum_{i=1}^m H_i$.  The proportions $m_0/m$ and $m_1/m$ are supposed to converge when $m$ grows to infinity and the limits are denoted by $\pi_0\in(0,1)$ and $\pi_1\in(0,1)$, respectively. From Section~\ref{sec:convcov}, when $\Gamma$ satisfies \eqref{weakdep}, the e.c.d.f.'s $\Gzero(t)$, $\Gone(t)$ and $\Gchap(t)$ converge in probability and we denote in what follows  the limiting c.d.f.'s by $F_0(t)=t$, $F_1(t)=\pnorm(\qnorm(t)-\delta)$ and $G(t)=\pi_0 F_0(t)+\pi_1 F_1(t)$, respectively.

Here, the quality of a procedure that rejects each null hypothesis ``$H_i=0$" whenever $\Phi(X_i)\leq t$
%thresholding the $p$-values  $(\Phi(X_i))_{1\leq i\leq m}$ at some $t\in[0,1]$ 
is given by
\begin{equation*}
\FDP_m(t)= \frac{\frac{m_0}{m}\Gzero(t)}{\Gchap(t)}, 
\end{equation*}
 where we used the convention $0/0=0$. Now, define the following functional: for $\alpha\in(0,1)$, 
\begin{align*}
\T(H)&=\sup\{t\in[0,1]\telque H(t)\geq t/\alpha\} \mbox{ for $H\in D(0,1)$},
\end{align*}
with the convention $\sup\{\emptyset\}=0$. Classically, the BH procedure (at level $\alpha$) corresponds the thresholding $\T(\Gchap)$, see \cite{GW2004}.
In the sequel, we study the asymptotic behavior of $\FDP_m(\T(\Gchap))$, denoted by $\FDP_m$ for short.

\subsection{Partial functional delta method}

Since we have $\Gchap(\T(\Gchap))=\T(\Gchap)/\alpha$ a.s., the FDP of BH procedure corresponds to the random variable 
\begin{equation}\label{FDPBH}
\FDP_m= \alpha\frac{\frac{m_0}{m}\Gzero(\T(\Gchap))}{\T(\Gchap)} 
= \Psi\left(\frac{m_0}{m}\Gzero,\frac{m_1}{m}\Gone\right),
\end{equation}
where we used the following functional: 
\begin{align}
\Psi(H_0,H_1)&=\alpha \frac{H_0(\T(H_0+H_1))}{\T(H_0+H_1)},  \mbox{ for $(H_0,H_1)\in D(0,1)^2$},\label{equ-Psi}
\end{align}
still using the conventions $\sup\{\emptyset\}=0$ and $0/0=0$. 
By Corollary~7.12 in \cite{Neu2008}, $\T$ is Hadamard differentiable at function $G$, tangentially to the set  $C(0,1)$ of continuous functions on $(0,1)$ and w.r.t. the supremum norm (we refer to Section~20.2 in \cite{Vaart1998} for a formal definition of Hadamard differentiable functions). This holds because $G$ is strictly concave and $\lim_{t\rightarrow 0} G(t)/t = +\infty$, which yields in particular $\T(G)\in(0,1)$.
As a consequence, standard calculations show that $\Psi$ is Hadamard differentiable at $(\pi_0 F_0,\pi_1 F_1)$ tangentially to $C(0,1)$, with derivative
\begin{align}
\dot{\Psi}_{(\pi_0 F_0,\pi_1F_1)}(H_0,H_1) =&\alpha \frac{H_0(\T(G)) }{\T(G)},\label{expr:der} \mbox{ for $(H_0,H_1)\in C(0,1)^2$.}
\end{align}
Now,  by using \eqref{FDPBH}, the functional delta method provides the asymptotic behavior of $\FDP_m$ from the one of $(\frac{m_0}{m}\Gzero,\frac{m_1}{m}\Gone)$. As a matter of fact, since the derivative $\dot{\Psi}_{(\pi_0 F_0, \pi_1 F_1)}(H_0,H_1)$ only depends on $H_0$ while the limit processes are (a.s.) continuous, establishing convergence results separately for $\Gzero$ and $\Gone $  is sufficient (we do not need to consider  the joint process $(\frac{m_0}{m}\Gzero,\frac{m_1}{m}\Gone)$). 
We have precisely formulated this argument in Proposition~\ref{prop:FDM}. This is an interesting novelty w.r.t. the methodology of \cite{Neu2008}. 
Hence, applying (twice) Theorem~\ref{main-thm} we are able to derive a convergence result for $\FDP_m$. 
\subsection{Results}

First, let us  introduce the following additional quantities:
\begin{align}
r_{0,m}&= \left(m_0^{-1} + \left| m_0^{-2} \sum_{i\neq j} (1-H_i)(1-H_j) \Gamma_{i,j}\right|\right)^{-1/2}\label{rate0};\\
 r_{1,m}&= \left(m_1^{-1} + \left| m_1^{-2} \sum_{i\neq j} H_iH_j \Gamma_{i,j}\right|\right)^{-1/2}\label{rate1}.
\end{align}

\begin{corollary}\label{cor-FDP}
Consider the two-group model \eqref{model}, generated from parameters $\delta$, $H=H^{(m)}$ and a correlation matrix $\Gamma=\Gamma^{(m)}$. 
Assume that $m_0$ (depending on $H$) is such that $\sqrt{m} (m_0/m-\pi_0)\rightarrow 0$.
Assume that $\Gamma$ satisfies either $\{$\eqref{vanish-secondorder} and \eqref{vanish-fourthorder}$\}$ or $\{$\eqref{vanish-secondorder+} and \eqref{gammam+}$\}$.
Assume that the rates $r_m$, $r_{0,m}$ and $r_{1,m}$, respectively defined by \eqref{rate}, \eqref{rate0}  and \eqref{rate1}, grow proportionally to infinity as $m$ tends to infinity. Let $\alpha\in(0,1)$ and $t^\star=t^\star(\delta,\alpha)$ be the unique $t\in(0,1)$ such that  $G(t)=t/\alpha$. Let $h(t^\star)= (\phi(\Phi^{-1}(t^\star))/t^\star)^2$.
Then the sequence of r.v. $\FDP_m$ defined by \eqref{FDPBH} enjoys the following convergence:
\begin{align}
\frac{\FDP_m - \pi_0\alpha}{\pi_0\alpha\left\{(1/t^\star-1)/m_0 +   h(t^\star)   \gamma_{0,m} \right\}^{1/2}}\leadsto  \mathcal{N}(0,1)\label{res:cor-FDP},
  \end{align}
where $\gamma_{0,m}= m_0^{-2}\sum_{i\neq j}  (1-H_i)(1-H_j)\Gamma_{i,j}$.
\end{corollary}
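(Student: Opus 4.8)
The plan is to derive the result from Theorem~\ref{main-thm}, applied separately to the null and alternative sub-arrays, combined with the Hadamard differentiability of $\Psi$ recorded in \eqref{expr:der} and the partial functional delta method of Proposition~\ref{prop:FDM}. Since $X_i=Y_i$ on $\{H_i=0\}$, the process $\Gzero$ is exactly the empirical distribution function of $\{\Phi(Y_i):H_i=0\}$, i.e. the object of Theorem~\ref{main-thm} for the $m_0\times m_0$ principal sub-matrix of $\Gamma$ indexed by the null coordinates, with rate $r_{0,m}$ from \eqref{rate0} and limiting parameter $\theta_0=\lim m_0\gamma_{0,m}$. I would first check that the hypotheses \eqref{vanish-secondorder}--\eqref{equtheta} (or \eqref{vanish-secondorder+}--\eqref{gammam+}) assumed for $\Gamma$ descend to this sub-array: every sub-sum $\sum_{i\neq j,\,H_i=H_j=0}(\Gamma_{i,j})^\l$ is dominated by the full sum $\sum_{i\neq j}(\Gamma_{i,j})^\l$, and since $m_0\asymp m$ and $r_{0,m}\asymp r_m$ by assumption, each vanishing condition for the sub-array follows from its full counterpart. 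Theorem~\ref{main-thm} then yields $r_{0,m}(\Gzero-F_0)\leadsto\Z_0$, a continuous Gaussian process with covariance kernel $K_0$ given by \eqref{equ:kernel} with $\theta$ replaced by $\theta_0$ (recall $F_0=I$).

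For the alternative part I would use that, because $\Phi$ is the (decreasing) Gaussian survival function, $\ind{\Phi(X_i)\leq t}=\ind{\Phi(Y_i)\leq F_1(t)}$ on $\{H_i=1\}$, so that $\Gone=\Gonetilde\circ F_1$, where $\Gonetilde(u)=m_1^{-1}\sum_{i:\,H_i=1}\ind{\Phi(Y_i)\leq u}$ is again of the form treated by Theorem~\ref{main-thm} (with rate $r_{1,m}$). Applying the theorem to $\Gonetilde$ and composing with the fixed continuous increasing time change $F_1$ gives $r_{1,m}(\Gone-F_1)\leadsto\Z_1\circ F_1$; in particular, by the proportionality of $r_{0,m}$ and $r_{1,m}$, the sequence $r_{0,m}(\Gone-F_1)$ stays tight. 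The deterministic proportions are disposed of by writing $\tfrac{m_0}{m}\Gzero-\pi_0F_0=\tfrac{m_0}{m}(\Gzero-F_0)+(\tfrac{m_0}{m}-\pi_0)F_0$ and using $r_{0,m}\leq\sqrt{m_0}\leq\sqrt m$ together with $\sqrt m(m_0/m-\pi_0)\to 0$, which makes the last summand negligible at scale $r_{0,m}$; hence $r_{0,m}(\tfrac{m_0}{m}\Gzero-\pi_0F_0)\leadsto\pi_0\Z_0$, and likewise the rescaled alternative coordinate remains tight.

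With these convergences in hand I would invoke Proposition~\ref{prop:FDM} at $(\pi_0F_0,\pi_1F_1)$ through \eqref{FDPBH}. Since $\Psi(\pi_0F_0,\pi_1F_1)=\alpha\pi_0F_0(t^\star)/t^\star=\pi_0\alpha$ (using $\T(G)=t^\star$) and the derivative \eqref{expr:der} depends only on its first argument,
\begin{align*}
r_{0,m}\paren{\FDP_m-\pi_0\alpha}\leadsto\frac{\alpha\pi_0}{t^\star}\,\Z_0(t^\star)\sim\mathcal{N}\paren{0,\ \frac{\alpha^2\pi_0^2}{(t^\star)^2}K_0(t^\star,t^\star)}.
\end{align*}
It remains to make the variance explicit. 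Using $r_{0,m}^{-2}=m_0^{-1}+|\gamma_{0,m}|$, $m_0\gamma_{0,m}\to\theta_0$, and $c_1(t)=\phi(\Phi^{-1}(t))$ (the zeroth Hermite polynomial being constant equal to $1$), a short computation gives $r_{0,m}^{-2}K_0(t^\star,t^\star)=m_0^{-1}t^\star(1-t^\star)+\gamma_{0,m}\,\phi(\Phi^{-1}(t^\star))^2$, whence
\begin{align*}
\var\paren{\FDP_m-\pi_0\alpha}\sim\pi_0^2\alpha^2\set{\frac{1/t^\star-1}{m_0}+h(t^\star)\,\gamma_{0,m}},
\end{align*}
with $h(t^\star)=(\phi(\Phi^{-1}(t^\star))/t^\star)^2$. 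Dividing by the corresponding standard deviation yields \eqref{res:cor-FDP}.

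The main obstacle I anticipate is twofold, and both facets rest on the standing assumption that $r_m$, $r_{0,m}$ and $r_{1,m}$ grow proportionally. First, this is what lets the $L^2$/$L^4$ vanishing conditions of Theorem~\ref{main-thm} survive the restriction to a sub-array and what fixes the correct sub-array parameter $\theta_0$. Second, it provides a single common rate $r_{0,m}$ at which both coordinates of $(\tfrac{m_0}{m}\Gzero,\tfrac{m_1}{m}\Gone)$ are tight, which is precisely the input required by Proposition~\ref{prop:FDM}; the alternative coordinate, though annihilated by the first-order derivative, must still be controlled at this rate for the delta-method remainder to vanish. A minor point is that Theorem~\ref{main-thm} is stated for centered vectors, so its use for $\Gone$ goes through the reparametrization $\Gone=\Gonetilde\circ F_1$ rather than a direct application.
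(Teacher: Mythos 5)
Your proposal follows essentially the same route as the paper's proof: apply Theorem~\ref{main-thm} to the null and alternative sub-arrays (whose hypotheses descend to the sub-matrices thanks to $r_{0,m}\propto r_m \propto r_{1,m}$ and $m_0\asymp m_1\asymp m$), absorb the deterministic term via $\sqrt{m}(m_0/m-\pi_0)\to 0$, invoke the partial functional delta method of Proposition~\ref{prop:FDM} with the derivative \eqref{expr:der} depending only on the first coordinate, and then translate the limiting variance into the self-normalized form \eqref{res:cor-FDP}. The one step you gloss over — and which the paper makes its very first move — is the reduction to subsequences, needed because nothing in the corollary's hypotheses guarantees that $m_0\gamma_{0,m}$ converges, so the limit $\theta_0$ you use (and hypothesis \eqref{equtheta}, resp.\ \eqref{gammam+}, for the sub-array) is only available along a subsequence; this reduction is legitimate precisely because the normalized statement \eqref{res:cor-FDP} has the fixed limit law $\mathcal{N}(0,1)$, but it should be stated explicitly.
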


\begin{proof}
First, classically, it is sufficient to prove that the convergence \eqref{res:cor-FDP} holds up to consider a subsequence.
Hence, we can assume that  \eqref{equtheta} and the convergences 
\begin{align}
  m_0^{-1} \sum_{i\neq j} (1-H_i)(1-H_j) \Gamma_{i,j}&\rightarrow \theta_0 ;\label{equtheta0}\\
 m_1^{-1} \sum_{i\neq j} H_i H_j \Gamma_{i,j}&\rightarrow \theta_1;\nonumber
\end{align}
 hold, with $\theta$, $\theta_0$ and $\theta_1$ valued in $[-1,+\infty]$.
Also note that since $r_m \propto r_{0,m}$ (resp. $r_m \propto r_{1,m}$), the sub-matrices $(\Gamma_{i,j})_{i,j: H_i=H_j=0}$ and $(\Gamma_{i,j})_{i,j: H_i=H_j=1}$ satisfies the same assumption set as $\Gamma$.
Now, let us write
\begin{align}\label{jolieequation}
r_{0,m}\left(\frac{m_0}{m}\Gzero(t) - \pi_0 F_0(t)\right) =  r_{0,m} m^{-1} \sum_{i=1}^m (1-H_i) (\ind{\Phi(X_i)\leq t} - t) + r_{0,m}t (m_0/m- \pi_0) .
  \end{align}
In the RHS of \eqref{jolieequation}, while the second term converges to $0$ by assumption, a consequence of Theorem~\ref{main-thm} is that the first term converges to a process with covariance function
$$\pi_0^2 \left[ \frac{1}{1+|\theta_0|} (t\wedge s -t s)+ \frac{\theta_0}{1+|\theta_0|} c_1(t) c_1(s) \right], \:\:\: \mbox{for all $t,s\in[0,1]$}.$$
Obviously, a similar result holds for the process $r_{1,m}( \frac{m_1}{m}\Gone - \pi_1 F_1)$. 

Applying the (partial) functional delta method as explained in Proposition~\ref{prop:FDM} (by using $ r_{0,m} \propto r_{1,m}$ and  \eqref{expr:der}), we obtain
\begin{align}
r_{0,m} ( \FDP_m - \pi_0\alpha )    \leadsto  \mathcal{N}\left(0\:,(\alpha \pi_0)^2 \left[ \frac{1/t^\star-1}{1+|\theta_0|} + \frac{\theta_0}{1+|\theta_0|} (c_1(t^\star)/t^\star)^2\right] \right)\label{res:cor-FDP2}.
  \end{align}
 Finally, we easily derive \eqref{res:cor-FDP} by separating the cases $\theta_0<+\infty$ and $\theta_0=+\infty$.
\end{proof}

As an illustration, Corollary~\ref{cor-FDP} can be used  in the independent case ($\gamma_{0,m}=0$) or $\rho_m$-equi-correlated case ($\gamma_{0,m}=\rho_m$), so recovering the previous results of \cite{Neu2008,Neu2009} (in the Gaussian case)  and \cite{DR2011}, respectively. 
This holds for any $H$ satisfying $\sqrt{m} (m_0/m-\pi_0)\rightarrow 0$.
Note that, in general, the quantity $\gamma_{0,m}$  depends on the unknown $H$ and not only on $\Gamma$. Hence, the asymptotic properties of $\FDP_m$ potentially depends on which null hypotheses are true or not, which can be considered as a limitation. Nevertheless, this fact is inherent to the multiple testing setting considered here, because the dependencies accounting in the FDP of BH's procedure are related to the sub-matrix $(\Gamma_{i,j})_{i,j: H_i=H_j=0}$ and thus are linked to the location of the true null hypotheses. \\

A convenient way to circumvent this problem is to add \textit{prior random effects}, by assuming that, previously and independently to the model \eqref{model}, we have drawn $H=(H_1,\dots,H_m)$ for $H_1,H_2,\dots$  i.i.d. Bernoulli variables of parameter $\pi_1=1-\pi_0$, for some $\pi_0\in(0,1)$. 
Thus $X$ follows the distribution $\mathcal{N}(\delta H,\Gamma)$ \textit{conditionally} on $H$.
The corresponding global (unconditional) model, often referred to as the \textit{two-group mixture model} has been widely used in the multiple testing literature, see, e.g. \cite{ETST2001,Storey2003,GW2004,RV2010}. By contrast with the previous model, $H$ is random. In particular, $m_0 =\sum_{i=1}^m (1-H_i)\sim \mathcal{B}(m,\pi_0)$ and $\sqrt{m} (m_0/m-\pi_0)$ does not degenerate at the limit, which adds some extra variance in the FDP convergence result. 
The counterpart is that the statement is substantially simplified, as we can see below.

\begin{corollary}\label{cor-FDP2}
Consider the two-group mixture model defined above, generated from parameters $\delta>0$, $\pi_0\in(0,1)$ and a correlation matrix $\Gamma=\Gamma^{(m)}$. 
Assume that $\Gamma$ satisfies either $\{$\eqref{vanish-secondorder} and \eqref{vanish-fourthorder}$\}$ or $\{$\eqref{vanish-secondorder+} and \eqref{gammam+}$\}$.
Let $\alpha\in(0,1)$ and $t^\star=t^\star(\delta,\alpha)$ be the unique $t\in(0,1)$ such that  $G(t)=t/\alpha$. Let $h(t^\star)= (\phi(\Phi^{-1}(t^\star))/t^\star)^2$.
Then the sequence of r.v. $\FDP_m$ defined by \eqref{FDPBH} enjoys the following convergence:
\begin{align}
\frac{\FDP_m - \pi_0\alpha}{\pi_0\alpha\left\{ (1/ t^\star-\pi_0)/(\pi_0 m) +   h(t^\star)   \gamma_{m} \right\}^{1/2}}\leadsto  \mathcal{N}(0,1)\label{res:cor-FDP2},
  \end{align}
where $\gamma_{m}$ is defined by \eqref{def:gammam}. 
\end{corollary}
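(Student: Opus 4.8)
The plan is to condition on the labels $H$ and reduce everything to the fixed-design analysis already carried out in the proof of Corollary~\ref{cor-FDP}, the single genuinely new feature being that in the mixture model $\sqrt m(m_0/m-\pi_0)$ no longer vanishes but contributes an extra, independent, Gaussian term. As there, I would first pass to a subsequence along which $m\gamma_m\to\theta\in[-1,+\infty]$, and observe that, conditionally on $H$, the mixture model is exactly the two-group model \eqref{model}. Thus the whole argument reduces to describing the limits of $r_{0,m}(\frac{m_0}{m}\Gzero-\pi_0 F_0)$ and $r_{1,m}(\frac{m_1}{m}\Gone-\pi_1 F_1)$ and feeding them into the partial functional delta method of Proposition~\ref{prop:FDM} through the derivative \eqref{expr:der}, using $\T(G)=t^\star$.

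The first and, I expect, main step is to verify that the \emph{random} submatrices $(\Gamma_{i,j})_{H_i=H_j=0}$ and $(\Gamma_{i,j})_{H_i=H_j=1}$ inherit the hypotheses of Theorem~\ref{main-thm}. Since $H_1,H_2,\dots$ are i.i.d.\ Bernoulli and independent of $Y$, the quadratic form $S_m=\sum_{i\neq j}(1-H_i)(1-H_j)\Gamma_{i,j}$ has $H$-expectation $\pi_0^2\sum_{i\neq j}\Gamma_{i,j}=\pi_0^2 m^2\gamma_m$; writing $b_i=1-H_i-\pi_0$ and expanding, a second-moment computation bounds $\var_H(S_m)$ by a multiple of $\sum_i(\sum_{j\neq i}\Gamma_{i,j})^2+\sum_{i\neq j}\Gamma_{i,j}^2$, which the relevant second-order condition \eqref{vanish-secondorder} (resp.\ \eqref{vanish-secondorder+}) forces to be negligible on the right scale. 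This yields, in probability (a.s.\ along a subsequence), $m_0/m\to\pi_0$, $m_1/m\to\pi_1$, and $m_0\gamma_{0,m}\to\pi_0\theta=:\theta_0$, $m_1\gamma_{1,m}\to\pi_1\theta=:\theta_1$; hence $r_{0,m}\propto r_{1,m}\propto r_m$, and since the submatrix entries are a subset of those of $\Gamma$, Assumptions \eqref{vanish-secondorder}--\eqref{vanish-fourthorder} (or \eqref{vanish-secondorder+}--\eqref{gammam+}) pass to both submatrices. Controlling that quadratic form on a scale relative to the possibly vanishing $\gamma_m$ is the delicate point.

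Keeping the decomposition \eqref{jolieequation}, I would write $r_{0,m}(\frac{m_0}{m}\Gzero-\pi_0 F_0)=A_m+\zeta_m I$, with $A_m(t)=r_{0,m}m^{-1}\sum_i(1-H_i)(\ind{\Phi(X_i)\leq t}-t)$ and $\zeta_m=r_{0,m}(m_0/m-\pi_0)$. Conditionally on $H$, Theorem~\ref{main-thm} gives $A_m\leadsto\Z_0$, a centered continuous Gaussian process with covariance $\pi_0^2\big[\frac{1}{1+|\theta_0|}(t\wedge s-ts)+\frac{\theta_0}{1+|\theta_0|}c_1(t)c_1(s)\big]$, while $\zeta_m$ is $H$-measurable. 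In regime (i), the CLT $\sqrt m(m_0/m-\pi_0)\leadsto\mathcal{N}(0,\pi_0\pi_1)$ together with $r_{0,m}/\sqrt m\to(\pi_0/(1+|\theta_0|))^{1/2}$ gives $\zeta_m\leadsto\zeta\sim\mathcal{N}\big(0,\pi_0^2\pi_1/(1+|\theta_0|)\big)$, whereas in regime (ii) one has $r_{0,m}\ll\sqrt m$ so $\zeta_m\to0$. Factorizing the joint characteristic function over the conditioning on $H$ (the conditional law of $A_m$ stabilizing a.s.\ to the fixed $\Z_0$, which is therefore independent of the label fluctuation $\zeta$) shows $A_m+\zeta_m I\leadsto\Z_0+\zeta I$, with $\Z_0\perp\zeta$, and likewise for the $\Gone$ process.

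Finally, Proposition~\ref{prop:FDM} with derivative \eqref{expr:der} gives $r_{0,m}(\FDP_m-\pi_0\alpha)\leadsto\alpha(\Z_0(t^\star)+\zeta t^\star)/t^\star=\alpha\,\Z_0(t^\star)/t^\star+\alpha\zeta$, a centered Gaussian whose variance, by independence, is the variance already obtained in the proof of Corollary~\ref{cor-FDP} plus $\alpha^2\var(\zeta)$. Since $c_1(t^\star)=\phi(\Phi^{-1}(t^\star))$, $h(t^\star)=(c_1(t^\star)/t^\star)^2$, and using the relations $\frac{1}{1+|\theta_0|}\sim r_{0,m}^2/m_0$ and $\frac{\theta_0}{1+|\theta_0|}\sim\gamma_{0,m}r_{0,m}^2$, the extra term turns $1/t^\star-1$ into $1/t^\star-1+\pi_1=1/t^\star-\pi_0$, so the limiting variance of $\FDP_m-\pi_0\alpha$ equals $(\alpha\pi_0)^2\big[(1/t^\star-\pi_0)/m_0+h(t^\star)\gamma_{0,m}\big]$. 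Replacing the random normalizer by the deterministic one via $m_0\sim\pi_0 m$ and $\gamma_{0,m}\sim\gamma_m$ (Slutsky), and separating the cases $\theta_0<+\infty$ and $\theta_0=+\infty$ as in Corollary~\ref{cor-FDP}, yields exactly \eqref{res:cor-FDP2}.
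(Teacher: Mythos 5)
Your proposal is correct and follows essentially the same route as the paper's proof: a subsequence reduction, a second-moment bound showing the quadratic form $\sum_{i\neq j}(1-H_i)(1-H_j)\Gamma_{i,j}$ concentrates so that $\gamma_{0,m}\sim\gamma_m$ and the random submatrices inherit the assumptions (the paper's \eqref{equ-truc}), the decomposition \eqref{jolieequation} whose second term now contributes the independent Bernoulli-CLT fluctuation (yielding the extra $ts$, i.e.\ the shift from $1/t^\star-1$ to $1/t^\star-\pi_0$), and finally the partial functional delta method of Proposition~\ref{prop:FDM}. You merely spell out details the paper compresses (the explicit variance bound, the conditional-independence factorization, and the closing algebra), all of which check out.
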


\begin{proof}
Again, it is sufficient to state the result up to consider a subsequence. Thus \eqref{equtheta} holds without loss of generality.
First check that \eqref{vanish-secondorder} entails 
  \begin{align}\label{equ-truc}
\frac{r_m^2}{m^2} \left( \sum_{i\neq j} (1-H_i)(1-H_j) \Gamma_{i,j} - \pi_0^2  \sum_{i\neq j} \Gamma_{i,j}\right)= o_P(1),
  \end{align}
 (computing, e.g., the variance of the latter) and this convergence can be made a.s. 
 by taking a suitable subsequence. 
 A consequence of \eqref{equ-truc} is that $\gamma_{0,m} \sim \gamma_m$ a.s.
 (in particular, $\theta_0$ defined by \eqref{equtheta0} equals $\pi_0\theta$.)
 This implies $r_m \propto r_{0,m}$  (a.s.) and thus the adequate assumption set for the sub-matrices $(\Gamma_{i,j})_{i,j: H_i=H_j=0}$ and $(\Gamma_{i,j})_{i,j: H_i=H_j=1}$. Now, by using \eqref{jolieequation}, we obtain that $r_{0,m}\left(\frac{m_0}{m}\Gzero(t) - \pi_0 F_0(t)\right)$ converges (unconditionally) to a process with covariance function defined by: for all $t,s\in[0,1]$,
\begin{align*}
&\pi_0^2 \left[ \frac{1}{1+\pi_0 |\theta|} (t\wedge s -t s)+ \frac{\pi_0 \theta}{1+\pi_0 |\theta|} c_1(t) c_1(s) \right] + \frac{\pi_0(1-\pi_0)}{1/\pi_0+|\theta|} ts\\
& = \pi_0^2 \left[ \frac{1}{1+\pi_0 |\theta|} (t\wedge s -\pi_0 t s)+ \frac{\pi_0 \theta}{1+\pi_0 |\theta|} c_1(t) c_1(s) \right]
 \end{align*}
  Obviously, a similar result holds for the process $r_{1,m}( \frac{m_1}{m}\Gone - \pi_1 F_1)$. 
We finish the proof by applying the (partial) functional delta method, see Proposition~\ref{prop:FDM}. \end{proof}
 
 \subsection{Discussion}
 
Corollary~\ref{cor-FDP2} provides a theoretical support for the ``curse of dependence" of BH procedure: as $m$ grows to infinity, the concentration of $\FDP_m$ around $\pi_0\alpha$ deteriorates when $\gamma_m$ increases, so when positive correlations appear between the individual statistical tests.
However, notice that, perhaps surprisingly, negative correlations help to decrease $\gamma_m$ and can yields to a concentration even better than under independence  when $\gamma_m$ is negative (although this phenomenon is necessary of limited amplitude because  $\gamma_m\geq -1/m$).
 
To illustrate further Corollary~\ref{cor-FDP2}, Figure~\ref{fig:conclusion}  displays the true distribution of $\FDP_m$, together with the Gaussian approximation obtained by Corollary~\ref{cor-FDP2}.
The two-group mixture model
chosen to generate the $X_i$'s uses a factor model \eqref{matrix:spike} for $\Gamma$ with the following parameters: $k=3$, $m\rho_m\in\{0,10,10^2,10^3\}$, $h_1/m=0.4$, $h_2/m=0.3$, $h_3/m=0.6$, and 
$p_{1}=(1,1,\dots,1)/m^{1/2}$, $p_{2}=(1,-1,1,-1,\dots,1,-1)/m^{1/2}$, $p_{3}=(1,1,\dots,1,-1,-1,\dots,-1)/m^{1/2}$. The parameters of the mixture are $\pi_0=0.9$ and $\delta=3$. The BH procedure is taken at level $\alpha=0.25$.

This experiment shows that, even for a relatively small values for $\rho_m$ ($\rho_m=0.002$ or $\rho_m=0.02$), the FDP distribution can be largely affected by the dependencies. Also, for $m=5\,000$ (left picture), while the Gaussian approximation looks accurate for $m\rho_m\in \{0,10,100\}$, this seems more questionable when $m\rho_m=1\,000$. This non-Gaussian phenomenon, whose amplitude increases with $\rho_m$ (for a fixed $m$), shows the limit of the proposed methodology. As a matter of fact, additional experiments show that the approximation induced by Theorem~\ref{main-thm} is still valid for $m=5\,000$ and $m\rho_m=1\,000$. As a consequence, we believe that the observed bias comes from the functional delta method, because the functional $\Psi$ \eqref{equ-Psi} cannot be considered as linear in that case. Finally, the right display in Figure~\ref{fig:conclusion} shows that, as one can expect, this phenomenon disappears by increasing the value of $m$.
  
  \begin{figure}
\begin{center}
\begin{tabular}{cc}
$m=5\,000$ & $m=50\,000$\\
\includegraphics[scale=0.4]{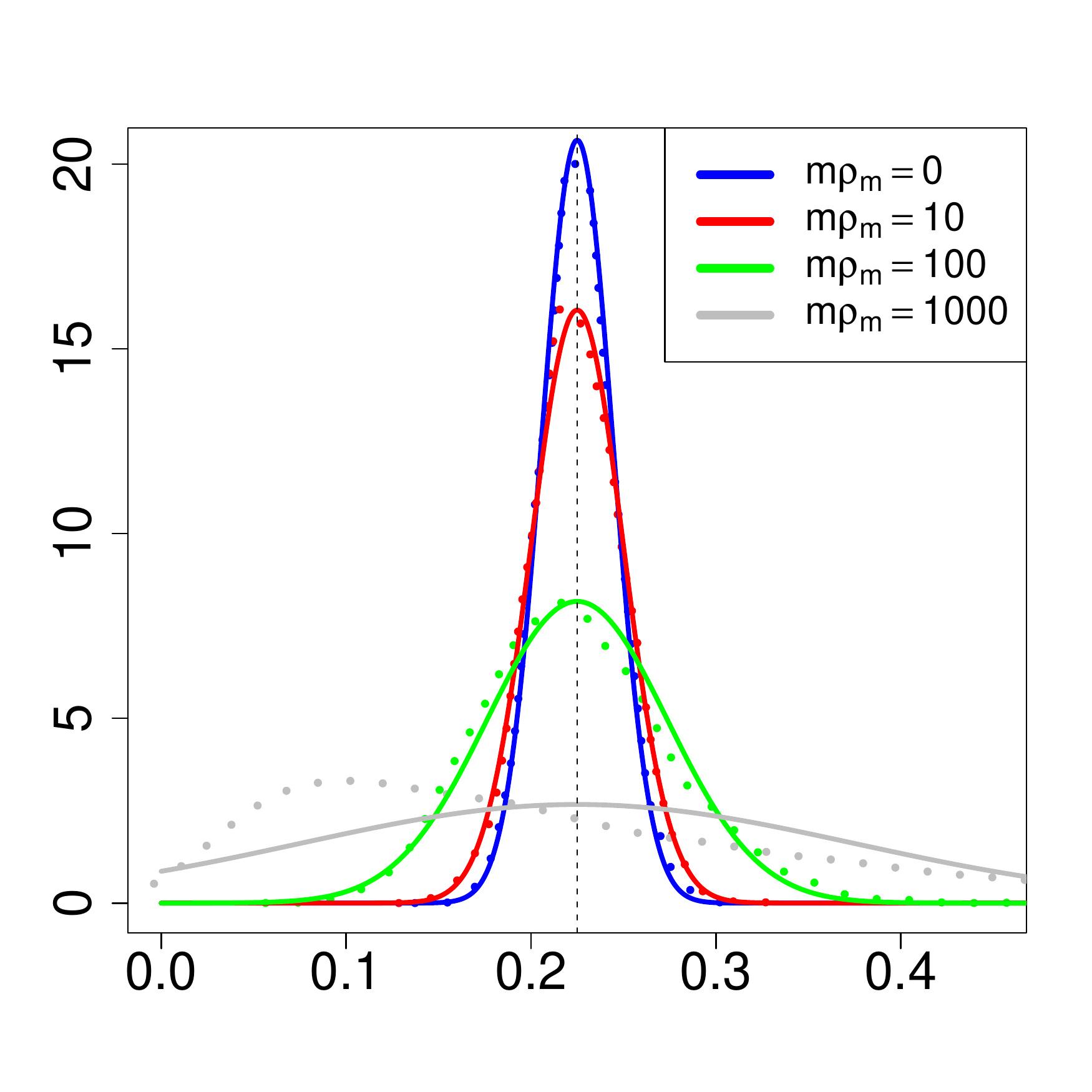} & \includegraphics[scale=0.4]{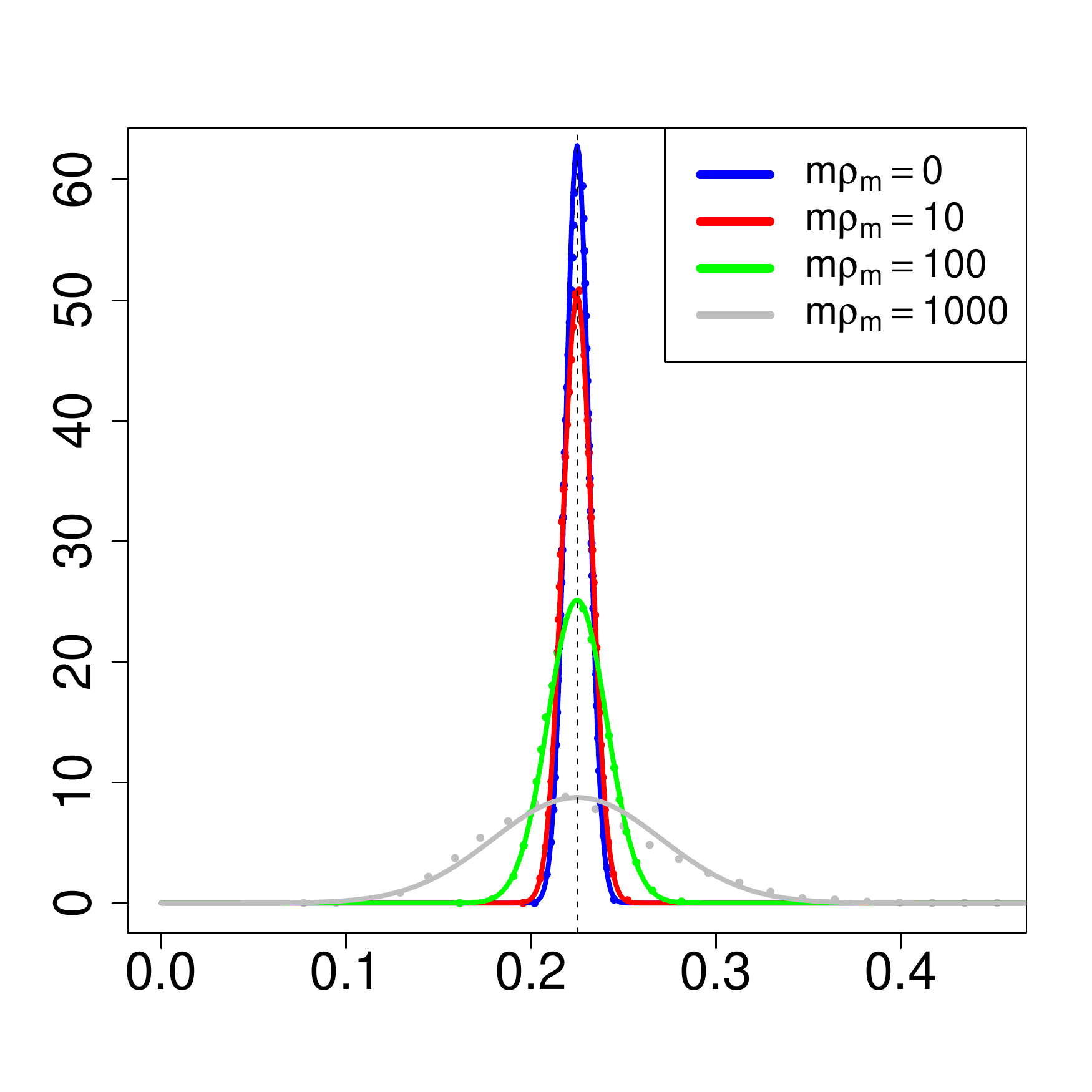}
\end{tabular}
\vspace{-1cm}
\caption{The curse of dependence for BH procedure: distribution of $\FDP_m$ \eqref{FDPBH}; the dotted lines corresponds to the true distribution computed over $5\,000$ simulations, the solid lines display the Gaussian approximation given by Corollary~\ref{cor-FDP2} (whose mean, $\pi_0\alpha=0.225$, is displayed by the dashed vertical line). Simulations made in a $3$-factor model, see text.}
\label{fig:conclusion}
\end{center}
\end{figure}

This study reinforces the idea that the BH procedure should be used very carefully when there are dependencies between the individual tests. Following the work of \cite{RW2007}, an interesting task would be to correct the BH procedure by  taking into account these dependencies while still providing a valid control of the FDP.  This is an exciting direction for a future work.

\section{Proof of Theorem~\ref{main-thm}}\label{sec:proof}

\subsection{A related result and additional notation}

Let us define the ``modified" empirical distribution  function $\Gtilde$ by the following relation: for  $t\in[0,1]$,
\begin{align}\label{rel-G-Gtilde}
r_m(\G(t)-t)=r_m(\Gtilde(t)-t)  + c_1(t) r_m\ol{Y}_m.
\end{align}
The convergence of the two processes $r_m (\G-I)$ and $r_m(\Gtilde-I)$ are strongly related by \eqref{rel-G-Gtilde}. The main idea of our proof is to deduce the convergence of $r_m (\G-I)$ from the one of $r_m(\Gtilde-I)$.
Precisely, the following result will be proved together with Theorem~\ref{main-thm} in the sequel.

\begin{proposition}\label{main-thm2}
Under one of the two sets of assumptions of Theorem~\ref{main-thm}, let us consider the corrected empirical distribution function $\Gtilde$ defined by \eqref{rel-G-Gtilde} and a continuous process $(\wt{\Z}_t)_{t\in[0,1]}$ with covariance function $\wt{K}$ defined by \eqref{equKmin}.
Then we have the convergence (in the Skorokhod topology) 
\begin{align}
r_m ( \Gtilde - I)    \leadsto \wt{\Z}/(1+|\theta|)^{1/2}  , \mbox{ as $m\rightarrow\infty$,} \label{conv-Gtilde}
  \end{align}
where $I(t)=t$ denotes the identity function. 
\end{proposition}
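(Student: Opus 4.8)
The plan is to establish the two ingredients of weak convergence in the Skorokhod space for $r_m(\Gtilde - I)$, namely convergence of the finite-dimensional distributions towards the centered Gaussian law with covariance $(1+|\theta|)^{-1}\wt K$, together with $C$-tightness; since the candidate limit $\wt\Z/(1+|\theta|)^{1/2}$ is continuous, these two facts yield \eqref{conv-Gtilde}. The starting point is the representation obtained from \eqref{rel-G-Gtilde} and $c_1(t)=\phi(\Phi^{-1}(t))$, namely $\Gtilde(t)=m^{-1}\sum_{i=1}^m g_t(Y_i)$ with $g_t(y)=\ind{\Phi(y)\le t}-t-c_1(t)\,y$. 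The whole point of the correction is that $\E[g_t(Y_1)]=0$ and $\E[g_t(Y_1)\,Y_1]=c_1(t)-c_1(t)=0$, so $g_t$ has Hermite rank at least $2$, with expansion $g_t=\sum_{\l\ge 2}\frac{c_\l(t)}{\l!}H_\l$.

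From here the covariance is immediate. Since subtracting $c_1(t)\ol{Y}_m$ exactly removes the first Hermite component and $\cov(\Gtilde(t),\ol{Y}_m)=0$, Mehler's formula and \eqref{equ:covG-detail} give
\[\cov(\Gtilde(t),\Gtilde(s))=\cov(\G(t),\G(s))-c_1(t)c_1(s)(m^{-1}+\gamma_m)=m^{-1}\wt K(t,s)+\sum_{\l\ge 2}\Big(m^{-2}\sum_{i\ne j}\Gamma_{i,j}^\l\Big)\frac{c_\l(t)c_\l(s)}{\l!},\]
where I use $\sum_{\l\ge2}c_\l(t)c_\l(s)/\l!=\wt K(t,s)$. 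Multiplying by $r_m^2$, using $r_m^2 m^{-1}\to(1+|\theta|)^{-1}$ from \eqref{equtheta}, and killing the remainder by $|\Gamma_{i,j}^\l|\le\Gamma_{i,j}^2$ ($\l\ge2$) together with \eqref{vanish-secondorder}, I obtain $r_m^2\cov(\Gtilde(t),\Gtilde(s))\to(1+|\theta|)^{-1}\wt K(t,s)$: the $\gamma_m$-carrying term has disappeared and only the diagonal (``independent'') part survives.

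For the finite-dimensional distributions, fix $t_1,\dots,t_p$ and reals $a_1,\dots,a_p$, set $h=\sum_k a_k g_{t_k}$ (still of Hermite rank $\ge2$), and reduce to showing $r_m m^{-1}\sum_{i=1}^m h(Y_i)\leadsto\cN(0,\|h\|^2/(1+|\theta|))$ with $\|h\|^2=\E[h(Y_1)^2]$. I would run a martingale-array central limit theorem: with $\mtc{F}_i=\sigma(Y_1,\dots,Y_i)$, decompose $\sum_{i=1}^m h(Y_i)=\sum_{i=1}^m D_i$ into martingale differences
\[D_i=\sum_{j=1}^m\big(\E[h(Y_j)\mid\mtc{F}_i]-\E[h(Y_j)\mid\mtc{F}_{i-1}]\big)=h(Y_i)-\E[h(Y_i)\mid\mtc{F}_{i-1}]+\sum_{j>i}\big(\E[h(Y_j)\mid\mtc{F}_i]-\E[h(Y_j)\mid\mtc{F}_{i-1}]\big),\]
so that it remains to verify (a) $r_m^2 m^{-2}\sum_i\E[D_i^2\mid\mtc{F}_{i-1}]\to\|h\|^2/(1+|\theta|)$ and (b) a conditional Lindeberg condition, the latter following from uniform integrability of $h(Y_1)^2$ and $r_m m^{-1}\to0$. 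The step I expect to be the main obstacle is the control of the look-ahead terms $\E[h(Y_j)\mid\mtc{F}_i]$ for $j>i$ in this genuinely non-stationary setting, where the Gaussian regression of $Y_j$ onto $(Y_1,\dots,Y_i)$ has no translation structure to exploit. The decisive feature is again the Hermite rank $\ge2$: because $\E[H_\l(Y_j)\mid\mtc{F}_i]$ is of order $(\text{correlation})^\l$, each look-ahead contribution is $O(\Gamma_{i,j}^2)$ in $L^2$, so its aggregate variance is bounded by $r_m^2 m^{-2}\sum_{i\ne j}\Gamma_{i,j}^2\to0$ via \eqref{vanish-secondorder}; this simultaneously renders the look-ahead part negligible and forces the diagonal term $h(Y_i)-\E[h(Y_i)\mid\mtc{F}_{i-1}]$ to carry the whole limiting variance $\|h\|^2/(1+|\theta|)$.

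It then remains to prove $C$-tightness, where I would invoke a moment criterion on $D[0,1]$ controlling second moments of products of consecutive increments $r_m(\Gtilde(t)-\Gtilde(u))$ and $r_m(\Gtilde(u)-\Gtilde(s))$; expanding the relevant fourth moment of $r_m m^{-1}\sum_i(g_t-g_s)(Y_i)$ produces fourth-order Gaussian moments and hence sums of the form $\sum_{i\ne j}\Gamma_{i,j}^4$, which is exactly the place where \eqref{vanish-fourthorder} is needed to make the bound uniform. Combining the finite-dimensional convergence with tightness yields \eqref{conv-Gtilde}. Finally, under the alternative set $\{$\eqref{vanish-secondorder+}, \eqref{gammam+}$\}$, corresponding to the regime $\theta=+\infty$, the same scheme applies: \eqref{vanish-secondorder+} replaces \eqref{vanish-secondorder} in both the covariance estimate and the martingale-variance control, while \eqref{gammam+} ensures $r_m\sim\gamma_m^{-1/2}$ and supplies the extra margin that lets the tightness argument go through without recourse to \eqref{vanish-fourthorder}.
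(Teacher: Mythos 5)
Your overall architecture (Cram\'er--Wold plus a martingale-array CLT for the finite-dimensional laws, then a moment criterion for $C$-tightness, with fourth moments and \eqref{vanish-fourthorder} under the first assumption set and second moments with an extra margin under \eqref{vanish-secondorder+}--\eqref{gammam+}) is the same as the paper's, and your covariance computation is correct. But there is a genuine gap at the step you yourself flag as the main obstacle, and the estimate you use to resolve it is false. You claim that each look-ahead term $\E[h(Y_j)\mid\mathcal{F}_i]-\E[h(Y_j)\mid\mathcal{F}_{i-1}]$, $j>i$, is $O(\Gamma_{i,j}^2)$ in $L^2$ ``because $\E[H_\ell(Y_j)\mid\mathcal{F}_i]$ is of order $(\mbox{correlation})^\ell$''. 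That bound is valid when conditioning on a \emph{single} Gaussian variable, but here the conditioning is on the whole $\sigma$-field $\mathcal{F}_i=\sigma(Y_1,\dots,Y_i)$: writing $\sigma_{j,i}^2=\var\left(\E[Y_j\mid\mathcal{F}_i]\right)$, the Hermite-rank-$2$ expansion gives $\E[h(Y_j)\mid\mathcal{F}_i]=\sum_{\ell\geq 2}\frac{c_\ell}{\ell!}\,\sigma_{j,i}^{\ell}\,H_\ell\!\left(\E[Y_j\mid\mathcal{F}_i]/\sigma_{j,i}\right)$, so the relevant quantity is $\sigma_{j,i}^2$, which is the variance of the Gaussian regression of $Y_j$ on $(Y_1,\dots,Y_i)$ and involves the \emph{inverse} of the covariance block of $(Y_1,\dots,Y_i)$; it cannot be controlled by pairwise correlations. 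Concretely, take $\Gamma_{1,2}=1-\delta$ and $Y_3=a\,(Y_1-Y_2)/\sqrt{2\delta}+\sqrt{1-a^2}\,\xi$ with $\xi$ independent: then $\Gamma_{1,3}=-\Gamma_{2,3}=a\sqrt{\delta/2}$ can be made arbitrarily small while $\var\left(\E[Y_3\mid Y_1,Y_2]\right)=a^2$ stays of order one. Moreover, summing your look-ahead terms over $i$ telescopes exactly to the compensator $\sum_j\E[h(Y_j)\mid\mathcal{F}_{j-1}]$, so this is not a side issue: proving its negligibility \emph{is} the core difficulty of the non-stationary setting, and your aggregate bound by $r_m^2m^{-2}\sum_{i\neq j}\Gamma_{i,j}^2$ does not follow.

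The paper fills precisely this hole in two steps. First, Lemma~\ref{lemmacompens} proves that the compensator is $L^2$-negligible under the \emph{additional} spectral assumption \eqref{eigenvalues}: a Cholesky factorization $\Gamma=RR^T$ yields $\sum_{i<j}\Gamma_{i,j}^2\geq\eta\sum_i\var\left(\E[Y_i\mid\mathcal{F}_{i-1}]\right)$, which is the correct substitute for your pairwise bound, and the rank-$2$ structure then transfers this control to $h_t$ via \eqref{decomp:htYicond}. Second, since \eqref{eigenvalues} is not among the hypotheses, Case~2 of the proof of Proposition~\ref{propGtilde} removes it by regularization: one replaces $Y_i$ by $Y_i^{\eps}=(Y_i+\eps\xi_i)/(1+\eps^2)^{1/2}$, whose covariance matrix has all eigenvalues at least $\eps^2/(1+\eps^2)$, applies the eigenvalue-constrained case, and then shows--using the rank-$2$ expansion \eqref{equ-2fact} together with \eqref{vanish-secondorder}--that $\limsup_m\E\left|r_m(\Gtilde(t)-t)-r_m^\eps(\Gtilde^\eps(t)-t)\right|\to 0$ as $\eps\to 0$, concluding by Lemma~\ref{lemapproxeps}. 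Any completion of your proposal would need these two ingredients (or a genuine substitute for them); as written, the decisive step rests on an incorrect estimate.
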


Additionally, throughout the section, we use the following notation
\begin{align}
h_t(x)=\ind{\Phi(x)\leq t} - t - c_1(t) x,\label{defht}
\end{align}
so that $\Gtilde(t)-t =m^{-1} \sum_{i=1}^m h_t(Y_i)$.
Finally, we will sometimes use the following assumption:
\begin{align}\tag{eigenvalues-away$0$}\label{eigenvalues}
\mbox{there exists $\eta>0$ (independent on $m$) lower bounding the $m$ eigenvalues of $\Gamma^{(m)}$}.
\end{align}

\subsection{Convergence of finite dimensional laws for $\Gtilde$}\label{sec:prooffinitedim}

Let us prove the following result.
\begin{proposition}\label{propGtilde} 
Assume that the covariance matrix $\Gamma$ depends on $m$ in such a way that \eqref{vanish-secondorder} holds with $r_m$ defined by \eqref{rate} and assume \eqref{equtheta}. Consider a continuous process $(\wt{\Z}_t)_{t\in[0,1]}$ with covariance function $\wt{K}$ defined by \eqref{equKmin}.
Then, 
the process $(r_m(\Gtilde-I),  Y_1^{(m)})$ (jointly) converges to $\mtc{L}(\wt{\Z}/(1+|\theta|)^{1/2}  )\otimes \mtc{N}(0,1)$ in the sense of the finite dimensional convergence. 
In particular, the convergence \eqref{conv-Gtilde} holds in the sense of the finite dimensional convergence.
\end{proposition}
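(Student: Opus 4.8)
The plan is to establish the finite-dimensional convergence by the Cram\'er--Wold device combined with a central limit theorem for martingale arrays, reading off the limiting covariance from the Hermite expansion of the centered indicators. First I would reduce the claim to a single linear combination: it suffices to show that for any $t_1,\dots,t_p\in[0,1]$ and any reals $a_1,\dots,a_p,b$,
$$
S_m := r_m\sum_{k=1}^p a_k\big(\Gtilde(t_k)-t_k\big) + b\,Y_1^{(m)} = r_m\, m^{-1}\sum_{i=1}^m g(Y_i) + b\,Y_1^{(m)}
$$
converges to a centered Gaussian, where $g:=\sum_{k=1}^p a_k\, h_{t_k}$ with $h_t$ given by \eqref{defht}. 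The whole point of working with $\Gtilde$ rather than $\G$ is that, by \eqref{expan:clt} and the Hermite expansion $\ind{\Phi(x)\le t}-t=\sum_{\l\ge 1}c_\l(t)H_\l(x)/\l!$, the function $h_t=\sum_{\l\ge2}c_\l(t)H_\l/\l!$ carries no component of Hermite degree $0$ or $1$; hence $g=\sum_{\l\ge2}g_\l H_\l/\l!$ with $g_\l=\sum_k a_k c_\l(t_k)$, and in particular $\E[g(Y_i)]=0$, so $S_m$ is centered.

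Next I would pin down the candidate limit and the independence structure \emph{exactly}, before invoking any CLT. By Mehler's formula $\E[H_\l(Y_i)H_{\l'}(Y_j)]=\ind{\l=\l'}\,\l!\,\Gamma_{i,j}^\l$, so $\E[Y_1\,g(Y_i)]=0$ for every $i$ (as $g\perp H_1$), giving $\cov\big(r_m m^{-1}\sum_i g(Y_i),\,Y_1\big)=0$ exactly; this forces the asymptotic independence, i.e.\ the product form of the limit law, once joint Gaussianity is established. For the $\Gtilde$-part,
$$
\var\Big(r_m m^{-1}\sum_i g(Y_i)\Big)=r_m^2\sum_{\l\ge2}\frac{g_\l^2}{\l!}\Big(m^{-1}+m^{-2}\sum_{i\ne j}\Gamma_{i,j}^\l\Big).
$$
Since $|\Gamma_{i,j}^\l|\le\Gamma_{i,j}^2$ for $\l\ge2$, assumption \eqref{vanish-secondorder} kills the off-diagonal sums (uniformly in $\l$, after factoring out the finite quantity $\sum_{\l\ge2}g_\l^2/\l!=\E[g(Y)^2]$), while $r_m^2m^{-1}\to(1+|\theta|)^{-1}$ under \eqref{equtheta} and \eqref{rate}; the diagonal therefore dominates and the variance tends to $(1+|\theta|)^{-1}\sum_{\l\ge2}g_\l^2/\l!=(1+|\theta|)^{-1}\sum_{k,l}a_ka_l\wt K(t_k,t_l)$, with $\wt K$ as in \eqref{equKmin}. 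The candidate limit variance is thus $\sigma^2=(1+|\theta|)^{-1}\sum_{k,l}a_ka_l\wt K(t_k,t_l)+b^2$, which matches $\mtc{L}(\wt{\Z}/(1+|\theta|)^{1/2})\otimes\mtc{N}(0,1)$ (no cross term, $\var(Y_1)=1$).

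It then remains to prove asymptotic normality of $S_m$. I would represent $Y^{(m)}=L\eps$ with $\eps=(\eps_1,\dots,\eps_m)$ i.i.d.\ standard Gaussian and $LL^T=\Gamma^{(m)}$, set $\mtc{F}_k=\sigma(\eps_1,\dots,\eps_k)$, and write $S_m=\sum_{k=1}^m\xi_{m,k}$ with $\xi_{m,k}=\E[S_m\mid\mtc{F}_k]-\E[S_m\mid\mtc{F}_{k-1}]$ (the linear term $bY_1$ contributing $bL_{1,k}\eps_k$). A martingale-array CLT reduces the problem to verifying that the conditional variance $\sum_k\E[\xi_{m,k}^2\mid\mtc{F}_{k-1}]$ converges in probability to $\sigma^2$ (its expectation is $\var(S_m)\to\sigma^2$ by the previous step, so only the concentration is at issue), together with a Lindeberg-type condition such as $\sum_k\E\xi_{m,k}^4\to0$.

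I expect this last step to be the main obstacle. Writing $Y_i=U_{i,k}+V_{i,k}$ with $U_{i,k}$ the $\mtc{F}_k$-measurable part, the conditional expectation $\E[H_\l(Y_i)\mid\mtc{F}_k]$ is a rescaled Hermite polynomial in $U_{i,k}$ (an Ornstein--Uhlenbeck smoothing of $g$), so each increment $\xi_{m,k}$ couples all coordinates $i$ through the entries of $L$; controlling its moments amounts to bounding sums of products of $L$-entries, which reduce to the traces of powers $m^{-2}\sum_{i,j}\Gamma_{i,j}^\l$. The crux is therefore twofold: showing that the off-diagonal interactions are asymptotically negligible -- this is precisely where \eqref{vanish-secondorder} and the normalization $r_m^2m^{-1}\to(1+|\theta|)^{-1}$ enter, leaving only diagonal ($i=j$) contributions -- and that the infinite Hermite series defining $g$ can be truncated at a finite order $L$ uniformly in $m$, the remainder $\sum_{\l>L}$ being negligible in $L^2$ because $g$ is a fixed square-integrable function. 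Once $S_m$ is shown to be asymptotically $\mtc{N}(0,\sigma^2)$ for every choice of $(a_k,b)$, the Cram\'er--Wold device yields the claimed joint convergence, and the exact vanishing of the $Y_1$-cross-covariance makes the product (independence) structure automatic.
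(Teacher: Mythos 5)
Your identification of the limit law is correct and is carried out essentially as in the paper's preliminaries: the Hermite expansion $h_t=\sum_{\l\ge 2}c_\l(t)H_\l/\l!$, Mehler's formula, the bound $|\Gamma_{i,j}|^\l\le \Gamma_{i,j}^2$ for $\l\ge 2$ together with \eqref{vanish-secondorder}, and $r_m^2/m\to(1+|\theta|)^{-1}$ under \eqref{equtheta} do give the variance $(1+|\theta|)^{-1}\sum_{k,l}a_ka_l\wt K(t_k,t_l)$, and the exact orthogonality $\E[Y_1 g(Y_i)]=0$ is the right explanation of the product structure. Indeed, folding $b\,Y_1^{(m)}$ into a Cram\'er--Wold combination is arguably cleaner than the paper's route for the joint part (the paper instead builds a common probability space on which $Y_1^{(m)}$ is fixed in $m$ and extracts joint convergence from the \emph{stable} convergence in Hall--Heyde's theorem).

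However, there is a genuine gap: the asymptotic normality of $S_m$ is never proved, only planned, and the plan underestimates precisely the step that constitutes the mathematical content of the paper's proof. You must show (a) concentration of the conditional variance $\sum_k\E[\xi_{m,k}^2\mid\mtc{F}_{k-1}]$ around $\var(S_m)$ and (b) a Lindeberg condition, for increments $\xi_{m,k}$ that couple \emph{all} coordinates through the factor $L$. Your assertion that these moments ``reduce to the traces of powers $m^{-2}\sum_{i,j}\Gamma_{i,j}^\l$'' is unfounded: conditional quantities such as $\E[H_\l(Y_i)\mid\mtc{F}_k]$ involve the entries of $L$ (equivalently, the conditional variances $\var(\E(Y_i\mid\mtc{F}_{i-1}))$) in a way that is \emph{not} controlled by the entries of $\Gamma$ alone. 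This is exactly why the paper's Lemma~\ref{lemmacompens}, which plays the role of your step (a), needs the extra spectral hypothesis \eqref{eigenvalues}: the Cholesky argument there gives $\sum_{i<j}\Gamma_{i,j}^2\ge\eta\sum_i\var(\E(Y_i\mid\mtc{F}_{i-1}))$ only when the eigenvalues of $\Gamma^{(m)}$ are bounded below, and \eqref{eigenvalues} is \emph{not} implied by the hypotheses of Proposition~\ref{propGtilde}. The paper then needs a second, separate argument to remove it: replace $Y_i$ by $Y_i^\eps=(Y_i+\eps\xi_i)/(1+\eps^2)^{1/2}$ with independent Gaussian noise (so that $\Gamma^\eps$ satisfies \eqref{eigenvalues}), apply the CLT to $Y^\eps$, and pass to the limit $\eps\to 0$ via a Mehler-based $L^1$ estimate and Lemma~\ref{lemapproxeps}. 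Your sketch contains no counterpart to this regularization, and a direct verification of (a) and (b) for your innovation-filtration increments would run into the same degenerate-spectrum obstruction (together with a uniform-in-$m$ truncation of the Hermite series, which you also defer). Until those estimates are actually carried out, the proposal establishes the covariance structure of the would-be limit but not the convergence itself.
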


\begin{proof}

The proof is based on central limit theorems for martingale arrays as presented, e.g., in Chapter~3 of \cite{HH1980}. 

First, since we aim at obtaining a convergence jointly with $Y_1^{(m)}$, a (somewhat technical) but useful task is to define the array of random variables $(Y^{(m)}_i, 1\leq i \leq m, m\geq 1)$ is such a way that $Y_1^{(m)}$ is fixed with $m$. 
This is possible by first considering some variable $Z\sim\mtc{N}(0,1)$, by letting $Y^{(m)}_1=Z$ for all $m\geq 1$, and then by choosing for each $m\geq 2$, the variables $Y^{(m)}_i, 2\leq i \leq m,$ such that 
\begin{itemize}
\item[-] $(Z,Y^{(m)}_i, 2\leq i \leq m)\sim \mtc{N}(0,\Gamma^{(m)})$;
\item[-] $\{(Y^{(m)}_i)_{2\leq i \leq m}, m\geq 2\}$ is a family of mutually independent vectors conditionally on $Z$.
\end{itemize}
This also define a common underlying space $(\Omega,\mtc{F},\P)$ for the array of random variables. 

Now, define the following nested array of $\sigma$-field: for $m\geq 1$,  $\mtc{G}_{m,0}=\sigma(\emptyset)$ and for $1\leq i \leq m$,  
$$
\mtc{G}_{m,i} = \sigma ( Y^{(\l)}_j  , 1\leq j\leq i\wedge \l, 1\leq \l\leq m ).
$$
Next, let us consider for each $t\in[0,1]$, the martingale array $(M_{m,i}(t), \mtc{G}_{m,i}, 1\leq i \leq m, m\geq 1)$ defined as follows:
\begin{align}\label{def:mart}
M_{m,i}(t) &=  \sum_{j=1}^i    X_{m,j}(t)\:\:\mbox{ for } X_{m,j}(t)=\frac{r_m}{m} \left(h_t(Y^{(m)}_j) -  \E \left( h_t(Y^{(m)}_j) \cond \mathcal{G}_{m,j-1}\right)\right).
\end{align}
 Clearly, 
\begin{align}\label{decomp:mart}
 r_m(\Gtilde(t)-t) = M_{m,m}(t) + \frac{r_m}{m} \sum_{i=1}^m \E \left( h_t(Y_i^{(m)}) \cond \mathcal{G}_{m,i-1}\right).
\end{align}
Also note that we can replace each $\mathcal{G}_{m,i}$ by $\mathcal{F}_{m,i}= \sigma (Y^{(m)}_1,\dots,Y^{(m)}_i)$ ($\mathcal{F}_{m,0}=\sigma(\emptyset)$) in the above expression, because $(Y^{(m)}_i, 2\leq i \leq m)$ is independent of $(Y^{(\l)}_j, 2\leq j\leq i \wedge \l, 2\leq \l<m)$, conditionally on $Y^{(m)}_1$. 

\paragraph{Case 1: \eqref{eigenvalues} is assumed}

We show in Lemma~\ref{lemmacompens} expression \eqref{conv2} that the second term in the RHS of \eqref{decomp:mart} has a vanishing variance as $m$ tends to infinity. Therefore, it remains to show that the conclusion of Proposition~\ref{propGtilde} holds for the process $M_{m,m}$, which we prove 
by using Lindeberg's theorem. We use Corollary~3.1 page 58 in \cite{HH1980} (or more precisely its generalization to the multidimensional case). The conditions are as follows:
\begin{itemize}
\item[(i)] for all $t\in[0,1]$, for all $\eps>0$,  $\sum_{i=1}^m \E \left( (X_{m,i}(t))^2 \ind{|X_{m,i}(t)|>\eps}  \cond \mtc{F}_{m,i-1} \right) \xrightarrow{P} 0$;
\item[(ii)] for all $t,s\in[0,1]$,    $\sum_{i=1}^m \E (X_{m,i}(t)X_{m,i}(s) \cond \mtc{F}_{m,i-1} )\xrightarrow{P} \wt{K}(t,s).$
\end{itemize}
To check (i), let us fix $t\in[0,1]$ and prove $\sum_{i=1}^m \E ( X_{m,i}(t))^4  = o(1)$. By definition, we have
\begin{align*}
\sum_{i=1}^m \E  (X_{m,i}(t))^4 &= \frac{r_m^4}{m^4} \sum_{i=1}^m \E \left( h_t(Y_i^{(m)}) -  \E \big( h_t(Y_i^{(m)}) \cond \mathcal{F}_{m,i-1}\big) \right)^4\\
&\leq 2^4 \left( \frac{r_m^4}{m^3} m^{-1} \sum_{i=1}^m \E  \left(h_t(Y_i^{(m)})\right)^4  + \frac{r_m^4}{m^4}  \sum_{i=1}^m \E \left( \E \big( h_t(Y_i^{(m)}) \cond \mathcal{F}_{m,i-1}\big) \right)^4 \right)\\
&\leq 2^5 \frac{r_m^4}{m^3} m^{-1} \sum_{i=1}^m \E  \left(h_t(Y_i^{(m)})\right)^4.
\end{align*}
Now, the RHS of the previous display converges to zero because $r_m\leq \sqrt{m}$ and $\E  (h_t(Y_i^{(m)}))^4<\infty$. 
 This proves condition (i) of Lindeberg's theorem.

Let us now turn to condition (ii). For $t,s\in[0,1]$, we obviously obtain
\begin{align}
\sum_{i=1}^m \E (X_{m,i}(t)X_{m,i}(s) \cond \mtc{F}_{m,i-1}) =& \frac{r_m^2}{m^2} \sum_{i=1}^m \E (h_t(Y_i^{(m)})h_s(Y_i^{(m)})\cond \mtc{F}_{m,i-1})\nonumber \\
&-  \frac{r_m^2}{m^2} \sum_{i=1}^m \E (h_t(Y_i^{(m)})\cond \mtc{F}_{m,i-1})\E (h_s(Y_i^{(m)})\cond \mtc{F}_{m,i-1}).\label{equRHS}
\end{align}
Next, by using $ab\leq 2(a^2+b^2)$ for all $a,b\in\R$ together with
 \eqref{conv1}, the second term in the RHS of  \eqref{equRHS} tends to zero in probability. Moreover, we have 
\begin{align*}
&\var\bigg(\frac{r_m^2}{m^2} \sum_{i=1}^m \big((h_t(Y_i^{(m)})h_s(Y_i^{(m)})-\E (h_t(Y_i^{(m)})h_s(Y_i^{(m)})\cond \mtc{F}_{m,i-1})\big)\bigg)\\
&=\frac{r_m^4}{m^4}  \sum_{i=1}^m\var\big((h_t(Y_i^{(m)})h_s(Y_i^{(m)})-\E (h_t(Y_i^{(m)})h_s(Y_i^{(m)})\cond \mtc{F}_{m,i-1})\big),
\end{align*}
because the elements inside the sum are martingale increments. Hence, the quantity inside the above display tends to zero. Combining the latter with  \eqref{equRHS} establishes condition (ii) of Lindeberg's theorem  provided that the following holds:
\begin{align*}
\frac{r^2_m}{m^2} \sum_{i=1}^m h_t(Y_i^{(m)})h_s(Y_i^{(m)}) \xrightarrow{P} (1+|\theta|)^{-1}\wt{K}(t,s).
\end{align*}
This comes directly from the law of large number stated in Lemma~\ref{LLN}, because $r_m^2/m\rightarrow (1+|\theta|)^{-1}$ by \eqref{rate} and \eqref{equtheta}.

Applying Lindeberg's theorem (in the underlying space described above), for any $t_1,\dots,t_k\in[0,1]$, the random vector 
$$Z_m=(M_{m,m}(t_1), \dots, M_{m,m}(t_k))$$ converges stably  in the following sense (see, e.g., \cite{JS2003} Definition~5.28): for all (fixed) bounded random variable $U$ and continuous bounded function $f$ in $\R^k$, 
$$
\E (U f(Z_m)) \to \E(U) \E(f(Z)) \mbox{ as $m\to\infty$,}
$$
where  $Z$ is a centered multivariate Gaussian vector with covariance $(1+|\theta|)^{-1} (\wt{K}(t_i,t_j))_{1\leq i,j\leq k}$. This implies that 
$(Z_m,Y_1)$ converges (jointly) in distribution to $ \mtc{L}(Z)\otimes\mathcal{N}(0,1)$. 
This finishes the proof of Proposition~\ref{propGtilde} in the case where  \eqref{eigenvalues} is assumed to hold.

\paragraph{Case 2: \eqref{eigenvalues} is not assumed}

The strategy is to apply Lemma~\ref{lemapproxeps} in order to reduce the study to ``Case 1" above.
For any $\eps>0$, let
$$
{Y}^\eps_i = \frac{Y_i + \eps \xi_i}{(1+\eps^2)^{1/2}},
$$
where $\xi_1,\xi_2,\dots$ are i.i.d. $\mtc{N}(0,1)$ variables, independent of all the $Y_i$'s. The covariance matrix of $({Y}^\eps_1 ,\dots,{Y}^\eps_m )$ is obviously
$${\Gamma}^\eps = \frac{\eps^2}{1+\eps^2} I_m +  \frac{1}{1+\eps^2}\Gamma.$$
Clearly, the corresponding rate \eqref{rate} is 
$ r_m^\eps= \left(m^{-1} + (1+\eps^2)^{-1} \left| \gamma_m\right| \right)^{-1/2}$. It is related to $r_m$ via the following inequalities: $ r_m\leq r_m^\eps \leq (1+\eps^2)^{1/2} r_m$.
Hence, $\Gamma^\eps$ satisfies \eqref{vanish-secondorder} and  \eqref{equtheta} with $\theta$ replaced by $\theta^\eps=\frac{1}{1+\eps^2}\theta$.
 Since it also satisfies \eqref{eigenvalues}, by using Proposition~\ref{propGtilde} in the ``Case 1" above, it satisfies for any $t_1,\dots,t_k\in[0,1]$,
 \begin{itemize}
 \item[(a)] $\big(r_m^\eps (\Gtilde^\eps(t_1)-t_1),\dots, r_m^\eps (\Gtilde^\eps(t_k)-t_k) ,  Y_1^\eps\big)  \leadsto \mtc{L}\left(\frac{(\wt{\Z}(t_1),\dots,\wt{\Z}(t_k))}{(1+|\theta^\eps|)^{1/2}} \right)\otimes \mtc{N}(0,1)$,
\end{itemize}
where $\Gtilde^\eps(t)-t=m^{-1} \sum_{i=1}^m h_t(Y^\eps_i)$ for all $t$.
Next, we clearly have, 
\begin{itemize}
 \item[(b)] $\frac{(\wt{\Z}(t_1),\dots,\wt{\Z}(t_k))}{(1+|\theta^\eps|)^{1/2}} \leadsto  \frac{(\wt{\Z}(t_1),\dots,\wt{\Z}(t_k))}{(1+|\theta|)^{1/2}}$ \mbox{ as $\eps \to 0$}.
\end{itemize}
Let us now prove that for any $t\in[0,1]$, 
\begin{align}\label{equfinalpreuve}
\limsup_m \left\{ \E \left| r_m (\Gtilde(t)-t)-r_m^\eps (\Gtilde^\eps(t)-t) \right| \right\} \to 0 \mbox{ as $\eps \to 0$}.
\end{align}
This will conclude the proof by applying Lemma~\ref{lemapproxeps}. First, we write
\begin{align*}
 &\E \left| r_m (\Gtilde(t)-t)-r_m^\eps (\Gtilde^\eps(t)-t) \right| \\
 \leq&\:  \E \left|r_m/m \sum_{i=1}^m   (h_t(Y_i) - h_t(Y_i^\eps)) \right|+  (r_m^\eps-r_m) \E \left|m^{-1} \sum_{i=1}^m    h_t(Y_i^\eps) \right|\\
 \leq&\:  \left\{ (r_m/m)^2 \:\E \left( \sum_{i=1}^m  (h_t(Y_i) - h_t(Y_i^\eps))\right)^2 \right\}^{1/2} +  ((1+\eps^2)^{1/2}-1) \E \left| r_m^\eps (\Gtilde^\eps(t)-t)  \right|.
\end{align*}
By taking the $\limsup$ in the above display, it only remains to show 
\begin{equation}\label{onestepmore}
\limsup_m \left\{(r_m/m)^2 \:\E \left( \sum_{i=1}^m  (h_t(Y_i) - h_t(Y_i^\eps))\right)^2 \right\} \to 0 \mbox{ as $\eps \to 0$}.
\end{equation}
This can be proved  by using Lemma~\ref{conseqMelher} \eqref{equ-2fact} as follows:
\begin{align*}
 &(r_m/m)^2 \:\E \left( \sum_{i=1}^m  (h_t(Y_i) - h_t(Y_i^\eps))\right)^2 \\
 &= (r_m/m)^2 \sum_{i,j=1}^m  \E \left((h_t(Y_i) - h_t(Y_i^\eps))(h_t(Y_j) - h_t(Y_j^\eps))\right)\\
 &=(r_m/m)^2 \sum_{i,j=1}^m  \left( \E \left(h_t(Y_i)h_t(Y_j)\right)  - \E \left(h_t(Y_i)h_t(Y_j^\eps)\right) - \E \left(h_t(Y_i^\eps)h_t(Y_j)\right) +\E \left(h_t(Y_i^\eps)h_t(Y_j^\eps)\right) \right) \\ 
 &=(r_m/m)^2 \sum_{i,j=1}^m \sum_{\l\geq 2} \frac{(c_\l(t))^2}{\l!} (\Gamma_{i,j})^{\l} \left( 1+ (1+\eps^2)^{-\l} - 2 (1+\eps^2)^{-\l/2}\right)
  \end{align*}
because $\cov(Y_i,Y_j)=\Gamma_{i,j}$, $\cov(Y^\eps_i,Y_j)=\cov(Y_i,Y^\eps_j)=\Gamma_{i,j}/(1+\eps^2)^{1/2}$ and $\cov(Y^\eps_i,Y^\eps_j)=\Gamma_{i,j}/(1+\eps^2)$.
Next, by separating the case $i=j$ and $i\neq j$, the previous display can be upper bounded by
\begin{align*}
& \sum_{\l\geq 2} \frac{(c_\l(t))^2}{\l!}  \left| 1+ (1+\eps^2)^{-\l} - 2 (1+\eps^2)^{-\l/2}\right| + (r_m/m)^2 \sum_{i\neq j} (\Gamma_{i,j})^{2} \times 4  \sum_{\l\geq 2} \frac{(c_\l(t))^2}{\l!}.
  \end{align*}
While the first term above does not depend on $m$ and converges to zero as $\eps\to 0$, the second term above as a $\limsup_m$ equal to zero by \eqref{vanish-secondorder}. This implies \eqref{onestepmore} and finishes the proof.
\end{proof}

\subsection{Convergence of finite dimensional laws for $\G$}\label{sec:prooffinitedimforGchap}

In this section, we aim at proving the following result:
\begin{proposition}\label{propGchap} 
Consider the assumptions of Proposition~\ref{propGtilde}. Then, \eqref{resmainthm} holds  in the sense of the finite dimensional convergence. \end{proposition}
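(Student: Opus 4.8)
The plan is to deduce the statement from the identity \eqref{rel-G-Gtilde}, namely $r_m(\G(t)-t)=r_m(\Gtilde(t)-t)+c_1(t)\,r_m\ol{Y}_m$, by combining the martingale limit of the first term (Proposition~\ref{propGtilde}) with a direct analysis of the linear correction. The correction is \emph{exactly} Gaussian, $r_m\ol{Y}_m\sim\mathcal{N}(0,\sigma_m^2)$ with $\sigma_m^2=r_m^2(m^{-1}+\gamma_m)=(1+m\gamma_m)/(1+m|\gamma_m|)$, so by \eqref{rate} and \eqref{equtheta} we have $\sigma_m^2\to(1+\theta)/(1+|\theta|)=:\sigma^2$ (with the conventions of \eqref{equ:kernel} when $\theta=+\infty$) and thus $r_m\ol{Y}_m\leadsto N\sim\mathcal{N}(0,\sigma^2)$. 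Granting the joint convergence discussed below, one obtains $r_m(\G-I)\leadsto\wt{\Z}/(1+|\theta|)^{1/2}+c_1(\cdot)\,N$ with $N$ independent of $\wt{\Z}$, and the covariance is identified in one line using \eqref{equKmin}: indeed $\tfrac{1}{1+|\theta|}\wt{K}(t,s)+\sigma^2 c_1(t)c_1(s)=\tfrac{1}{1+|\theta|}(t\wedge s-ts)+\tfrac{\theta}{1+|\theta|}c_1(t)c_1(s)=K(t,s)$, which is exactly \eqref{equ:kernel}. This yields \eqref{resmainthm} in the finite-dimensional sense.

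The entire content therefore lies in the joint finite-dimensional convergence of $\big((r_m(\Gtilde(t_j)-t_j))_{j\le k},\,r_m\ol{Y}_m\big)$ to a Gaussian vector whose $\Gtilde$-block is \emph{independent} of $N$. The structural reason to expect independence is that $h_t$ in \eqref{defht} has been built with no first Hermite component: writing $h_t=\sum_{\l\ge2}\tfrac{c_\l(t)}{\l!}H_\l$ and using the Hermite orthogonality behind Mehler's formula (Proposition~\ref{prop:covG}), one checks that $\cov(r_m(\Gtilde(t)-t),r_m\ol{Y}_m)=0$ \emph{exactly}, for every $m$ and every $t$; the same computation shows the limiting covariance of the joint vector is block diagonal, with $\Gtilde$-block $\Sigma:=\big((1+|\theta|)^{-1}\wt{K}(t_j,t_{j'})\big)_{j,j'}$ and corner $\sigma^2$. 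Hence it suffices to prove that the joint vector is asymptotically Gaussian, after which independence follows automatically from the vanishing cross-covariances. My plan for this is to condition on $\ol{Y}_m$ and to re-run the martingale central limit theorem of Proposition~\ref{propGtilde} conditionally: under $\P(\cdot\mid\ol{Y}_m=y)$ the vector $(Y_i^{(m)})_i$ is Gaussian with mean $\beta_i\,y$, $\beta_i=m(\Gamma\mathbf{1})_i/(\mathbf{1}^T\Gamma\mathbf{1})$, and covariance $\Gamma-(\Gamma\mathbf{1})(\Gamma\mathbf{1})^T/(\mathbf{1}^T\Gamma\mathbf{1})$, and the claim is that the conditional finite-dimensional limit of $(r_m(\Gtilde(t_j)-t_j))_j$ is still $\mathcal{N}(0,\Sigma)$, \emph{independently of the value $y$}. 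Writing $\xi_m(\lambda)=\E\big(e^{\,i\sum_j\lambda_j r_m(\Gtilde(t_j)-t_j)}\mid\ol{Y}_m\big)$, this says $\xi_m(\lambda)\xrightarrow{P}e^{-\frac12\lambda^T\Sigma\lambda}$, whence $\E\big(e^{\,i\sum_j\lambda_j r_m(\Gtilde(t_j)-t_j)+iw\,r_m\ol{Y}_m}\big)=\E\big(e^{\,iw r_m\ol{Y}_m}\xi_m(\lambda)\big)\to e^{-\frac12\lambda^T\Sigma\lambda}\,e^{-\frac12 w^2\sigma^2}$ by dominated convergence, giving the required factorisation.

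The main obstacle is precisely this conditional central limit theorem, i.e.\ showing that neither the mean shift $\beta_i\ol{Y}_m$ nor the rank-one change of covariance alters the limit. Both corrections touch only Hermite orders $\l\ge2$, which is exactly where Assumption~\eqref{vanish-secondorder} operates, through the operator-norm bound $\|\Gamma^{(m)}\|_2\le 1+\|\Gamma^{(m)}-I_m\|_2\le 1+\big(\sum_{i\ne j}\Gamma_{i,j}^2\big)^{1/2}=1+o(m/r_m)$. For instance the $L^2$-size of the conditional mean of $r_m(\Gtilde(t)-t)$ is controlled by $\tfrac{r_m}{m}\,\mathbf{1}^T\Gamma^2\mathbf{1}/(\mathbf{1}^T\Gamma\mathbf{1})\le\tfrac{r_m}{m}\|\Gamma^{(m)}\|_2\to0$, and the perturbation of the conditional quadratic variation is handled the same way, by feeding the rank-one-corrected matrix into the law of large numbers of Lemma~\ref{LLN} and bounding the difference by the same quantities. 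The Lindeberg condition is insensitive to the conditioning and is verified as in Proposition~\ref{propGtilde}. It is worth noting that regime~(ii) ($\theta=+\infty$) is trivial here, since then $(1+|\theta|)^{-1}\to0$ forces the whole block $r_m(\Gtilde-I)$ to $0$ in probability and $r_m(\G-I)\leadsto c_1(\cdot)N$ directly; the real work is in regime~(i), where both pieces survive. Carrying out the conditional CLT uniformly enough to obtain $\xi_m(\lambda)\xrightarrow{P}e^{-\frac12\lambda^T\Sigma\lambda}$ is the one genuinely technical step; everything else is bookkeeping on top of Proposition~\ref{propGtilde}.
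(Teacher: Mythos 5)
Your overall architecture coincides with the paper's: both proofs start from the identity \eqref{rel-G-Gtilde}, both reduce the problem to joint convergence of $\bigl(r_m(\Gtilde-I),\,r_m\ol{Y}_m\bigr)$ with asymptotic independence of the two blocks, and your one-line identification of the limit covariance via \eqref{equKmin} is exactly the computation behind \eqref{equ:kernel}. Several of your intermediate claims are correct and worth keeping: $r_m\ol{Y}_m$ is exactly Gaussian with variance $r_m^2(m^{-1}+\gamma_m)\to(1+\theta)/(1+|\theta|)$; the cross-covariance $\cov\bigl(r_m(\Gtilde(t)-t),\,r_m\ol{Y}_m\bigr)$ vanishes exactly for every $m$ by Hermite orthogonality, since $h_t$ has no component on $H_0$ and $H_1$; the characteristic-function factorization deduced from $\xi_m(\lambda)\xrightarrow{P}e^{-\frac12\lambda^T\Sigma\lambda}$ is valid; and the regimes $\theta=+\infty$ and $\theta=-1$ are indeed degenerate (one of the two blocks dies in probability), so only $\theta\in(-1,\infty)$ carries real content.

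However, the step you defer --- the conditional CLT $\xi_m(\lambda)\xrightarrow{P}e^{-\frac12\lambda^T\Sigma\lambda}$ --- is not a technicality to be discharged later; it is the entire content of the proposition, and it is substantially harder than your sketch suggests. Conditionally on $\ol{Y}_m$, the coordinates decompose as $Y_i=\rho_i\wt{Y}+V_i$ with $\wt{Y}=\ol{Y}_m/(\var\ol{Y}_m)^{1/2}$, $\rho_i=(\Gamma\mathbf{1})_i/(\mathbf{1}^T\Gamma\mathbf{1})^{1/2}$ and $V\sim\mtc{N}(0,\Gamma-\rho\rho^T)$, so the summands are neither centered nor of unit variance. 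Consequently none of the Hermite-based tools used to prove Proposition~\ref{propGtilde} --- Mehler's formula (Lemma~\ref{conseqMelher}), the compensator bounds of Lemma~\ref{lemmacompens}, the law of large numbers of Lemma~\ref{LLN}, the moment bounds of Proposition~\ref{KSprop} --- apply as stated, since all of them are formulated for standardized Gaussian coordinates; redeveloping them for the shifted functions $h_t(\rho_i\wt{Y}+\cdot)$ under the rank-one-corrected covariance amounts to re-proving Proposition~\ref{propGtilde} in a perturbed form, a second CLT of the same order of difficulty as the first. The paper bypasses this entirely with a device already packaged for you: Proposition~\ref{propGtilde} is deliberately stated as a \emph{joint} convergence of $(r_m(\Gtilde-I),\,Y_1^{(m)})$ to the product law $\mtc{L}(\wt{\Z}/(1+|\theta|)^{1/2})\otimes\mtc{N}(0,1)$ (this is what the stable convergence in its proof buys). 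The paper's proof of Proposition~\ref{propGchap} simply augments the Gaussian vector with a zeroth coordinate $Y_0=\ol{Y}_m(\var\ol{Y}_m)^{-1/2}$, checks in Lemma~\ref{lemmalambda} that the augmented $(m+1)\times(m+1)$ matrix $\Lambda^{(m+1)}$ inherits \eqref{vanish-secondorder} and \eqref{equtheta} with an equivalent rate, applies Proposition~\ref{propGtilde} to the augmented vector (the extra summand $r_m h_t(Y_0)/m$ being negligible), and reads off exactly the joint convergence with asymptotic independence that you are attempting to establish by hand. So your route is not wrong in principle, but as written it has a genuine gap precisely where all the difficulty sits, and the gap is avoidable by exploiting the form in which Proposition~\ref{propGtilde} was stated.
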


\begin{proof}

From expression \eqref{rel-G-Gtilde}, we investigate the (joint) convergence of $(r_m(\Gtilde-I),  r_m\ol{Y}_m)$. 

\paragraph{Case 1: $\theta=-1$}

In that case, $r_m^2 \var(\ol{Y}_m) \rightarrow 0$. Hence,  we can  directly use Proposition~\ref{propGtilde} to state that $(r_m(\Gtilde-I),  r_m\ol{Y}_m)$ converges to $\mtc{L}(\wt{\Z}/(1+|\theta|)^{1/2}  )\otimes \delta_0$ in the sense of the finite dimensional convergence. This establishes Proposition~\ref{propGchap} in that case.

\paragraph{Case 2: $\theta>-1$} 

Now,  $r_m^2 \var(\ol{Y}_m) $ is converging to some positive real number, namely $(1+\theta)/(1+|\theta|)>0$.
In particular, $\var(\ol{Y}_m) >0$ for $m$ large enough.
Let us define the random variable  $$Y_0= \ol{Y}_m (\var \ol{Y}_m)^{-1/2}.$$ We now consider the $(m+1)$-dimensional random vector $(Y_i)_{0\leq i \leq m}$, which is centered, with a covariance matrix denoted $\Lambda^{(m+1)}=(\Lambda^{(m+1)}_{i,j})_{0\leq i,j\leq m}$ and such that $\Lambda^{(m+1)}_{0,0}=1$, $\Lambda^{(m+1)}_{i,j}=\Gamma^{(m)}_{i,j}$ for $1\leq i,j\leq m$. We easily check that $\Lambda^{(m+1)}$ satisfies \eqref{vanish-secondorder} and \eqref{equtheta} with the same value of $\theta$ and a rate asymptotically equivalent to the original $r_m$, see Lemma~\ref{lemmalambda}. 
Hence, Proposition~\ref{propGtilde} shows that (by using notation therein),
$$
\left( r_m\left( (m+1)^{-1} \sum_{i=0}^m h_t(Y_i)\right),  Y_0\right) \leadsto \mtc{L}(\wt{\Z}/(1+|\theta|)^{1/2}  )\otimes \mtc{N}(0,1),
$$
in the sense of the finite dimensional convergence. Since $r_m h_t(Y_0)/m$ tends to zero in probability, the last display can be rewritten as 
$$
\left( r_m(\Gtilde-I),  \ol{Y}_m (\var \ol{Y}_m)^{-1/2} \right) \leadsto \mtc{L}(\wt{\Z}/(1+|\theta|)^{1/2}  )\otimes \mtc{N}(0,1).
$$
Finally, since $r_m^2 \var(\ol{Y}_m) \rightarrow (1+\theta)/(1+|\theta|)$, we finish the proof by applying \eqref{rel-G-Gtilde}.
\end{proof}

\subsection{Tightness under \eqref{vanish-secondorder}, \eqref{vanish-fourthorder} and \eqref{equtheta}}

To complete the proof of Proposition~\ref{main-thm2}, we prove that the process $X_m=r_m(\Gtilde-I)$ is tight in the Skorokhod space. 
This also implies tightness for $r_m(\G-I)$ by \eqref{rel-G-Gtilde} because $c_1$ is a continuous function on $[0,1]$, itself entailing Theorem~\ref{main-thm}.

We consider here the set of assumptions \eqref{vanish-secondorder}, \eqref{vanish-fourthorder} and \eqref{equtheta} (the second set of assumptions is examined in Section~\ref{sec:autrecond}). 
For proving the tightness of $X_m$, we use Proposition~\ref{prop:tight}. This is possible because $|c_1(t)-c_1(s)|\leq L |t-s|^{1/2}$, $0\leq s, t \leq 1$ for some constant $L>1$ (see Lemma \ref{lem:majc1}). 
Below, we prove that \eqref{tight-crit} holds in the following way: for large $m$,
\begin{equation}\label{tight-crit-applied}
\E \big| X_m(t)-X_m(s) \big|^4 \leq   C\big( |t-s|^{3/2} + (r_m)^{-\eps_0} |t-s| \big), \mbox{ for all $t,s\in[0,1]$},
\end{equation}
for some constant $C>0$ and for a constant $\eps_0>0$  such that \eqref{vanish-fourthorder} holds. 

To establish \eqref{tight-crit-applied}, fix $t,s\in[0,1]$, $s\leq t$ and write 
\begin{align}
\E \big| X_m(t)-X_m(s) \big|^4 &= \frac{r_m^4}{m^4}\sum_{i,j,k,\l} \E\big( \ol{h}(Y_i) \ol{h}(Y_j) \ol{h}(Y_k) \ol{h}(Y_\l) \big), \label{bigsum}
\end{align}
where we let $\ol{h}(x)=\ind{s< \Phi(x)\leq t}-(t-s)-(c_1(t)-c_1(s))x =h_t(x)-h_s(x)$.
Now, we split the sum in the RHS of \eqref{bigsum} following the value of  the cardinal of $\{i,j,k,\l\}$.

\paragraph{Sum over $\#\{i,j,k,\l\}= 1$}

The corresponding summation is $\frac{r_m^4}{m^4}\sum_{i=1}^m \E\left( (\ol{h}( Y_i))^4 \right)$. We have
\begin{align}
 \E\left( (\ol{h}( Y_i))^4 \right) \leq 3^4  \left(|t-s| + |t-s|^4 + \E(|Y_1|^4) L^4|t-s|^{4 /2} \right) \leq   C_1 |t-s|,\label{majnorm4},
 \end{align}
 for $C_1=5 \:3^4 L^4>0$.
Since $r_m^2\leq m$, we obtain
\begin{align}
 \frac{r_m^4}{m^4}\sum_{i=1}^m \E\left( (\ol{h}( Y_i))^4 \right) \leq \frac{C_1}{m} |t-s|.\label{majeasy}
 \end{align}

\paragraph{Sum over $\#\{i,j,k,\l\}= 2$}

Up to a multiplicative constant, we should consider the sum
\begin{align*}
 \frac{r_m^4}{m^4}\sum_{i\neq j} \E\left( (\ol{h}( Y_i))^2(\ol{h}( Y_j))^2 \right)=T_1^{(1)} + T_2^{(1)},
\end{align*} 
   where,  for an arbitrary $\eta_1>0$,  $T_1^{(1)}$ and $T_2^{(1)}$ are defined by   
   \begin{align}
   T_1^{(1)} &= \frac{r_m^4}{m^4}\sum_{i\neq j} \ind{|\Gamma_{i,j}|> \eta_1} \E\left( (\ol{h} ( Y_i))^2(\ol{h} ( Y_j))^2 \right)\label{equT1};\\
   T_2^{(1)}&=  \frac{r_m^4}{m^4}\sum_{i\neq j} \ind{|\Gamma_{i,j}|\leq \eta_1} \E\left( (\ol{h} ( Y_i))^2(\ol{h} ( Y_j))^2 \right)\label{equT2}.
   \end{align}
On the one hand, by using \eqref{majnorm4},
   \begin{align}
   T_1^{(1)} \leq  \frac{r_m^4}{\eta_1^2 m^4}\sum_{i\neq j}  |\Gamma_{i,j}|^2 
  \E\left( (\ol{h}( Y_i))^2(\ol{h}( Y_j))^2 \right)  \leq \frac{C_1}{\eta_1^2 m} \left(\frac{r_m^2}{ m^2}\sum_{i\neq j}  |\Gamma_{i,j}|^2 \right) |t-s|.
  \label{majT11}
  \end{align}
On the other hand,  by using \eqref{KSupperbound} in Proposition~\ref{KSprop} (with $g_1=g_2=(\ol{h})^2$ and $d=2$), we obtain that for any $i\neq j$ such that $|\Gamma_{i,j}|\leq \eta_1$ (choosing $\eta_1>0$ such that $2\sqrt{3\eta_1} <1$), 
  \begin{align*}
\E\left( (\ol{h}( Y_i))^2(\ol{h}( Y_j))^2 \right) \leq \frac{1}{(1-2\sqrt{3\eta_1})^2}\left(\E\left(|\ol{h}(Z)|^{8/3}\right)\right)^{3/2} \leq \frac{C_2}{(1-2\sqrt{3\eta_1})^2} |t-s|^{3/2},
  \end{align*}
for $C_2=3^4 L^4 \left(\E\left(|Z|^{8/3}\right)\right)^{3/2}\in(0,\infty)$
. Hence,  we get 
 \begin{align}
   T_2^{(1)} \leq  \frac{C_2}{(1-2\sqrt{3\eta_1})^2} |t-s|^{3/2}.\label{majT12}
   \end{align}

\paragraph{Sum over $\#\{i,j,k,\l\}= 3$}

Up to a multiplicative constant, we should consider the sum
\begin{align*}
 \frac{r_m^4}{m^4}\sum_{i, j, k \:\neq} \E\left( \ol{h}( Y_i) \ol{h}( Y_j) (\ol{h}( Y_k))^2 \right)=T_1^{(2)} + T_2^{(2)},
\end{align*}
   where,  for an arbitrary $\eta_2>0$,  $T_1^{(2)}$ and $T_2^{(2)}$ are defined similarly to \eqref{equT1} and \eqref{equT2}, by separating the case where $\max_{e_1\neq e_2\in \{i,j,k\}}{|\Gamma_{e_1,e_2}|}$ is above or below $\eta_2$. 

On the one hand, by using \eqref{majnorm4}, we have
\begin{align}
T_1^{(2)} &\leq  \frac{C_1 r_m^4}{m^4}\sum_{i, j, k \:\neq}  \ind{\max_{e_1\neq e_2\in \{i,j,k\}} {|\Gamma_{e_1,e_2}|}>\eta_2 } |t-s|\nonumber\\
&\leq 3\frac{C_1/\eta_2^4 }{m} \left(\frac{r_m^4}{m^2}\sum_{i\neq j}  |\Gamma_{i,j}|^4 \right)|t-s|. \label{majT21}
\end{align}
On the other hand,  by using \eqref{KSupperbound2} in Proposition~\ref{KSprop} (with $g_1=g_2=\ol{h}$, $g_3=(\ol{h})^2$, $f_1=f_2=\ind{s<\pnorm(\cdot)\leq t}$, $d=3$ and $d'=2$), we obtain that for any distinct $i, j, k$ such that $\max_{e_1\neq e_2\in \{i,j,k\}} {|\Gamma_{e_1,e_2}|}\leq \eta_2$ (choosing $\eta_2>0$ such that $3\sqrt{3\eta_2} <1$), 
 \begin{align*}
\E\left( \ol{h}( Y_i) \ol{h}( Y_j) (\ol{h}( Y_k))^2 \right) \leq \max_{e_1\neq e_2\in \{i,j,k\}} {|\Gamma_{e_1,e_2}|^2} \frac{27^{2}}{(1-3\sqrt{3\eta_2} )^3} |t-s|^{3/2} \times \sqrt{C_2}. 
  \end{align*}
  This yields
   \begin{align}
T_2^{(2)} &\leq   3\frac{ \sqrt{C_2} \:27^{2}}{(1-3\sqrt{3\eta_2} )^3}  \left(\frac{r_m^2}{m^2}\sum_{i \neq j}  |\Gamma_{i,j}|^2\right) |t-s|^{3/2}\label{majT22}.
  \end{align}

  \paragraph{Sum over $\#\{i,j,k,\l\}= 4$}

The last sum to be considered is 
\begin{align*}
 \frac{r_m^4}{m^4}\sum_{i, j, k,\l \:\neq} \E\left( \ol{h}( Y_i) \ol{h}( Y_j) \ol{h}( Y_k) \ol{h}( Y_\l)  \right)=T_1^{(3)} + T_2^{(3)},
\end{align*}
   where,  for an arbitrary $\eta_3>0$,  $T_1^{(3)}$ and $T_2^{(3)}$ are defined similarly to \eqref{equT1} and \eqref{equT2}, by separating the case where $\max_{e_1\neq e_2\in \{i,j,k,\l\}}{|\Gamma_{e_1,e_2}|}$ is above or below $\eta_3$. 
As before, 
\begin{align}
T_1^{(3)} &\leq  \frac{C_1 r_m^4}{m^4}\sum_{i, j, k,\l \:\neq}  \ind{\max_{e_1\neq e_2\in \{i,j,k,\l\}} {|\Gamma_{e_1,e_2}|}>\eta_3 } |t-s|\nonumber\\
&\leq 6 \frac{C_1}{\eta_3^4}  \left(\frac{r_m^4}{m^2}\sum_{i\neq j}  |\Gamma_{i,j}|^4 \right)|t-s|. \label{majT31}
\end{align}
Next, by using \eqref{KSupperbound2} in Proposition~\ref{KSprop} (with $g_i=\ol{h}$, $f_i=\ind{s<\pnorm(\cdot)\leq t}$ and  $d'=d=4$), we obtain that (choosing $\eta_3>0$ such that $4\sqrt{3\eta_3} <1$), 
 \begin{align}
T_2^{(3)} & \leq  \frac{r_m^4}{m^4}\sum_{i, j, k,\l \:\neq} \max_{e_1\neq e_2\in \{i,j,k,\l\}} {|\Gamma_{e_1,e_2}|^4} \frac{48^{4}}{(1-4\sqrt{3\eta_3})^4}  |t-s|^{3}\nonumber\\
&\leq  6\frac{48^{4}}{(1-4\sqrt{3\eta_3})^4} \left(\frac{r_m^4}{m^2}\sum_{i\neq j}  |\Gamma_{i,j}|^4 \right) |t-s|^{3} \label{majT32}.
  \end{align}

Finally, we obtain \eqref{tight-crit-applied} by combining the bounds \eqref{majeasy},\eqref{majT11},\eqref{majT12},\eqref{majT21},\eqref{majT22},\eqref{majT31},\eqref{majT32} and by using the assumptions \eqref{vanish-secondorder} and \eqref{vanish-fourthorder}.

 \subsection{Tightness under \eqref{vanish-secondorder+} and \eqref{gammam+}}\label{sec:autrecond}

Obviously, \eqref{vanish-secondorder+} and  \eqref{gammam+} imply \eqref{vanish-secondorder}, \eqref{equtheta} with $\theta=+\infty$, and $r_m\sim \gamma_m^{-1/2}$. Hence, 
Proposition~\ref{propGtilde} entails that the finite dimensional laws of $X_m=r_m(\Gtilde-I)$ converge to $0$ and
it only remains to prove that $X_m$ is tight. This can be done as in the previous section, except that we use $\kappa=2$ in Proposition~\ref{prop:tight}. Namely, we prove that,
for large $m$,
\begin{equation}\label{tight-crit-applied2}
\E \big| X_m(t)-X_m(s) \big|^2 \leq   C  \gamma_m^{\delta_0} |t-s| , \mbox{ for all $t,s\in[0,1]$},
\end{equation}
for some constants $C>0$, $\delta_0>0$. To prove \eqref{tight-crit-applied2},  we write (by using the same notation as in the previous section)
\begin{align*}
\E \big| X_m(t)-X_m(s) \big|^2=&\frac{r_m^2}{m^2} \sum_{i,j} \E\big( \ol{h}(Y_i) \ol{h}(Y_j) \big)\\
\leq& C_3\frac{r_m^2}{m} |t-s| +  C_3 |t-s|\frac{r_m^2}{\eta^2 m^2} \sum_{i\neq j} |\Gamma_{i,j}|^2 \\
&+ \frac{r_m^2}{m^2} \sum_{i\neq j} \ind{|\Gamma_{i,j}|\leq \eta} \E\big( \ol{h}(Y_i) \ol{h}(Y_j) \big)
\end{align*}
for some $\eta>0$ and by letting $C_3= 4\:3^2 L^2>0$.
Applying now \eqref{KSupperbound2} in Proposition~\ref{KSprop} (with $g_i=\ol{h}$, $f_i=\ind{s<\pnorm(\cdot)\leq t}$ for $i=1,2$ and  $d'=d=2$), we obtain that (choosing $\eta>0$ such that $2\sqrt{3\eta} <1$), 
$$\frac{r_m^2}{m^2} \sum_{i\neq j} \ind{|\Gamma_{i,j}|\leq \eta} \E\big( \ol{h}(Y_i) \ol{h}(Y_j) \big) \leq  \frac{(12)^{2}}{(1-2\sqrt{3\eta})^2} |t-s|^{3/2} \left(\frac{r_m^2}{m^2} \sum_{i\neq j} |\Gamma_{i,j}|^2 \right).$$
Finally, since \eqref{vanish-secondorder+} and  \eqref{gammam+} provide $\frac{r_m^2}{m^2} \sum_{i\neq j} |\Gamma_{i,j}|^2 =O( \gamma_m^{\eps_1})$ and $r_m^2/m=O(\gamma_m^{\eps_2})$ for some $\eps_1,\eps_2>0$, the criterion \eqref{tight-crit-applied2} is proved with $\delta_0=\eps_1 \wedge \eps_2$ and the proof is finished.

\appendix

\section{Technical results for proving the main theorem}   \label{sec:keylem}

\begin{lemma}\label{lemmacompens} 
Assume that $\Gamma^{(m)}$ satisfies  \eqref{vanish-secondorder} and \eqref{eigenvalues}. 
For $1\leq i \leq m$, let us consider the filtration $\{\mathcal{F}_i\}_{0\leq i \leq m}$ defined by $\mtc{F}_0=\sigma(\emptyset)$ and $\mathcal{F}_i=\sigma(Y_1,\dots,Y_i)$, and denote $\sigma_i^2=\var \left[\E \left( Y_i \cond \mathcal{F}_{i-1}\right) \right]$.
Consider the function $h_t(\cdot)$ defined by \eqref{defht}, the Hermite polynomials $H_\l(\cdot)$ defined by \eqref{def:hermite} and the coordinates $c_\l(\cdot)$ defined by \eqref{expan:clt}. 
Then  the following holds:
\begin{align}
&   \frac{r_m^2}{m^2} \sum_{i=1}^m \sigma_i^2  \rightarrow 0\label{conv0} ;\\
&\frac{r_m^2}{m^2} \sum_{i,j}  \left( \E \left[ \E \left( Y_i \cond \mathcal{F}_{i-1}\right) \E \left( Y_j \cond \mathcal{F}_{j-1}\right) \right]\right)^2
 \rightarrow 0;\label{conv0bis}\\
&  \frac{r^2_m}{m^2} \sum_{i=1}^m \E \left[\left( \E \left( h_t(Y_i) \cond \mathcal{F}_{i-1}\right) \right)^2 \right]\rightarrow 0, \:\:\mbox{ for any $t \in[0,1]$}\label{conv1} ;\\
&  \E \left[\left( \frac{r_m}{m} \sum_{i=1}^m \E \left( h_t(Y_i) \cond \mathcal{F}_{i-1}\right) \right)^2 \right]\rightarrow 0, \:\:\mbox{ for any $t \in[0,1]$}\label{conv2} .
\end{align}
\end{lemma}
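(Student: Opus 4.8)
The whole lemma is built on the Gaussian innovation representation of $Y$. The plan is to set $\wh Y_i=\E(Y_i\cond\mathcal F_{i-1})$ and $\epsilon_i=Y_i-\wh Y_i$, so that the $\epsilon_i$ are independent centered Gaussian variables with variances $d_i=\var(\epsilon_i)$ and the Cholesky factorisation $\Gamma=GG^{T}$ holds with $G$ lower triangular, $G_{i,i}=\sqrt{d_i}$. I record two facts. First, $\sigma_i^2=\var(\wh Y_i)=1-d_i=\sum_{l<i}G_{i,l}^2$, and, since Cauchy interlacing forces every leading principal submatrix of $\Gamma$ to have smallest eigenvalue at least the $\eta$ of \eqref{eigenvalues}, one has $d_i\ge\eta$ and $\sigma_i^2\le 1$. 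Second, reading $\wh Y_i$ as the regression of $Y_i$ on $(Y_1,\dots,Y_{i-1})$ gives $\sigma_i^2=c_i^{T}\Gamma_{i-1}^{-1}c_i\le\eta^{-1}\sum_{l<i}\Gamma_{i,l}^2$, with $c_i=(\Gamma_{i,l})_{l<i}$ and $\Gamma_{i-1}$ the top-left submatrix. Summing over $i$ and using symmetry yields $\sum_{i}\sigma_i^2\le\frac12\eta^{-1}\sum_{i\neq j}\Gamma_{i,j}^2$, so that \eqref{conv0} is immediate from \eqref{vanish-secondorder}.

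For \eqref{conv0bis} I would estimate the Frobenius norm of the covariance matrix of $(\wh Y_1,\dots,\wh Y_m)$. Crude operator-norm bounds cost a factor $\big(\sum_{i\neq j}\Gamma_{i,j}^2\big)^2$, which is far too large; the decisive gain is the exact identity
\[\cov(\wh Y_i,\wh Y_j)=\Gamma_{i,j}-\sqrt{d_i}\,G_{j,i},\qquad i<j,\]
coming from $\wh Y_i=\sum_{l<i}G_{i,l}\zeta_l$ versus $\Gamma_{i,j}=\sum_{l\le i}G_{i,l}G_{j,l}$ (with $\zeta$ an i.i.d. standard Gaussian vector such that $Y=G\zeta$). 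Bounding $(\cov(\wh Y_i,\wh Y_j))^2\le 2\Gamma_{i,j}^2+2G_{j,i}^2$, using $d_i\le1$ and the identity $\sum_{l<j}G_{j,l}^2=\sigma_j^2$, I obtain $\sum_{i,j}(\cov(\wh Y_i,\wh Y_j))^2\le C_\eta\sum_{i\neq j}\Gamma_{i,j}^2$ for a constant $C_\eta$; \eqref{conv0bis} then follows from $r_m^2\le m$ and \eqref{vanish-secondorder}.

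Finally, \eqref{conv1} and \eqref{conv2} reduce to \eqref{conv0} and \eqref{conv0bis} through one Mehler-type computation. By definition \eqref{defht}, $h_t=\sum_{\l\ge2}\frac{c_\l(t)}{\l!}H_\l$ (the orders $0$ and $1$ cancel), while the generating function of Hermite polynomials together with $Y_i=\wh Y_i+\epsilon_i$ gives $\E(H_\l(Y_i)\cond\mathcal F_{i-1})=\sigma_i^{\l}H_\l(\wh Y_i/\sigma_i)$. Orthogonality of the $H_\l$ under $\wh Y_i/\sigma_i\sim\mathcal N(0,1)$ then yields $\E[(\E(h_t(Y_i)\cond\mathcal F_{i-1}))^2]=\sum_{\l\ge2}\frac{c_\l(t)^2}{\l!}\sigma_i^{2\l}\le t(1-t)\,\sigma_i^{2}$, using $\sum_{\l\ge1}c_\l(t)^2/\l!=t(1-t)$ and $\sigma_i^2\le1$; summing and invoking \eqref{conv0} gives \eqref{conv1}. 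For \eqref{conv2}, setting $W_i=\E(h_t(Y_i)\cond\mathcal F_{i-1})$, the same computation for the Gaussian pair $(\wh Y_i,\wh Y_j)$ gives $\E[W_iW_j]=\sum_{\l\ge2}\frac{c_\l(t)^2}{\l!}(\cov(\wh Y_i,\wh Y_j))^{\l}$; since $|\cov(\wh Y_i,\wh Y_j)|\le\sigma_i\sigma_j\le1$ one has $|\cov(\wh Y_i,\wh Y_j)|^{\l}\le\cov(\wh Y_i,\wh Y_j)^2$ for $\l\ge2$, so $|\sum_{i,j}\E[W_iW_j]|\le t(1-t)\sum_{i,j}(\cov(\wh Y_i,\wh Y_j))^2$, and \eqref{conv2} follows from the bound just proved for \eqref{conv0bis}.

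The only genuinely delicate point is the linear (rather than quadratic) dependence on $\sum_{i\neq j}\Gamma_{i,j}^2$ in \eqref{conv0bis}: naive bounds scale like its square, and it is the Cholesky identity $\cov(\wh Y_i,\wh Y_j)=\Gamma_{i,j}-\sqrt{d_i}\,G_{j,i}$, together with $\sum_{l<j}G_{j,l}^2=\sigma_j^2$, that brings it down to first order. Everything else is routine once this identity and the conditional Hermite formula are in place.
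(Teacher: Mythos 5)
Your proof is correct and is essentially the paper's own argument: the same Cholesky/innovation representation, the same key identity $\cov(\wh{Y}_i,\wh{Y}_j)=\Gamma_{i,j}-R_{i,i}R_{j,i}$ (for $i<j$) that yields the first-order (rather than quadratic) dependence on $\sum_{i\neq j}\Gamma_{i,j}^2$ in \eqref{conv0bis}, and the same conditional Hermite expansion $\E\left(h_t(Y_i)\cond \mathcal{F}_{i-1}\right)=\sum_{\l\geq 2}\frac{c_\l(t)}{\l !}\,\sigma_i^{\l}H_\l\big(\wh{Y}_i/\sigma_i\big)$ combined with orthogonality to deduce \eqref{conv1} and \eqref{conv2} from \eqref{conv0} and \eqref{conv0bis}. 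The only cosmetic deviations are that you derive the conditional Hermite identity from the generating function rather than by expanding the conditional expectation and computing its coefficients via Fubini, and that you prove \eqref{conv0} through the Gaussian regression formula for $\sigma_i^2$ together with Cauchy interlacing instead of the equivalent bound $\|Rx\|^2\geq \eta\|x\|^2$ on the Cholesky factor.
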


\begin{proof}
By using Cholesky's decomposition, we can write $\Gamma=RR^T$ where $R$ is $m\times m$ a lower triangular matrix. Hence, denoting by  $R_{1,.},\dots R_{m,.}$ the lines of $R$, we have $<R_{i,.},R_{j,.}>=\Gamma_{i,j}$ for all $i,j$. Moreover, since we can write $Y_i=\sum_{j=1}^i R_{i,j} Z_{j}$ for some $Z_1,\dots,Z_m$ i.i.d. $\mathcal{N}(0,1)$, we have  $R_{i,i}^2=\var(Y_i \cond \mathcal{F}_{i-1})=1-\sigma_i^2$ and $\sigma_i^2=\sum_{j=1}^{i-1} R_{i,j}^2 $ for all $i$. 

Let us now prove \eqref{conv0}. From \eqref{eigenvalues}, we have for all $x\in\R^m$, $||R^T x||^2=x^T \Gamma x\geq \eta ||x||^2 $. Hence for all $x\in\R^m$, $||R x||^2\geq \eta ||x||^2 $. Thus, we have
\begin{align*}
\sum_{i<j} \Gamma_{i,j}^2 &=  \sum_{j=1}^m \sum_{i=1}^{j-1} \left( [R R_{j.}^T]_i \right)^2 \geq \eta \sum_{j=1}^m \sum_{i=1}^{j-1} R_{j,i}^2 = \eta\sum_{i=1}^m \sigma_i^2,
\end{align*}
which proves  \eqref{conv0} by  \eqref{vanish-secondorder}. As for \eqref{conv0bis}, we have for $i< j$,
\begin{align*}
\E \left[ \E \left( Y_i \cond \mathcal{F}_{i-1}\right) \E \left( Y_j \cond \mathcal{F}_{j-1}\right) \right] &=\E\left[ \sum_{k=1}^{i-1} R_{i,k} Z_{k} \sum_{\l=1}^{j-1} R_{j,\l} Z_{\l} \right] =  \sum_{k=1}^{i-1} R_{i,k} R_{j,k} =\Gamma_{i,j} - R_{i,i} R_{j,i}.
\end{align*}
Hence, we obtain
\begin{align*}
&\sum_{i<j}  \left( \E \left[ \E \left( Y_i \cond \mathcal{F}_{i-1}\right) \E \left( Y_j \cond \mathcal{F}_{j-1}\right) \right]\right)^2=\sum_{i<j}  (\Gamma_{i,j} - R_{i,i} R_{j,i})^2 \leq 2 \left(\sum_{i<j}  \Gamma_{i,j}^2+ \sum_{i<j} R_{j,i}^2 \right),
\end{align*}
which establishes \eqref{conv0bis} by \eqref{conv0} and \eqref{vanish-secondorder}.

Next, let us establish the following equality in $L^2(\P_m)$: for any $i=1,\dots,m$ and $t\in[0,1]$,
\begin{align}\E \left( h_t(Y_i) \cond \mathcal{F}_{i-1}\right) = \sum_{\l\geq 2} \frac{c_\l(t)}{\l !}  \sigma_i^{\l} H_\l \left(\frac{\E \left( Y_i \cond \mathcal{F}_{i-1}\right)}{\sigma_i}\right), \label{decomp:htYicond}
\end{align}
where the RHS of \eqref{decomp:htYicond} is $0$ if $\sigma_i=0$. 
For this,  consider some $1\leq i \leq m$ and assume $\sigma_i>0$ (otherwise the result is obvious). Let $\wt{Y}_i=\frac{\E \left( Y_i \cond \mathcal{F}_{i-1}\right)}{\sigma_i} \sim \mtc{N}(0,1)$. By using the multivariate Gaussian structure of $Y$, the distribution of $Y_i$ conditionally on $\mathcal{F}_{i-1}$ only depends on $\wt{Y}_i$. Hence, we can write $\E \left( h_t(Y_i) \cond \mathcal{F}_{i-1}\right)=g(\wt{Y}_i )$ for a (unique) function $g$ in $L^2(\R,\mtc{N}(0,1))$. 
We now consider the expansion of $g$ w.r.t. the Hermite polynomials in that space:  
$$
g(\cdot) = \sum_{\l\geq 0} \frac{\E(g(\wt{Y}_i) H_\l(\wt{Y}_i))}{\l!} H_\l(\cdot),
$$
and we can compute each coordinate $\E(g(\wt{Y}_i) H_\l(\wt{Y}_i))$ in the following way:
for any $\l\geq 0$,
\begin{align*}
\E \left[H_\l (\wt{Y}_i) \E \left( h_t(Y_i) \cond \mathcal{F}_{i-1}\right)\right] &=  \E \left[H_\l (\wt{Y}_i)  h_t(Y_i)\right]\\
&= \sum_{\l' \geq 2} \frac{c_{\l'}(t)}{(\l') !} \E \left[H_\l (\wt{Y}_i)  H_{\l'}(Y_i)\right]\\
& =\frac{c_{\l}(t)}{\l !} \sigma_i^\l \: \l !\:\ind{\l\geq 2}, 
\end{align*}
by using  Fubini's theorem (because $ \sum_{\l' \geq 2} \frac{|c_{\l'}(t)|}{(\l') !} \E \left[|H_\l (\wt{Y}_i)  H_{\l'}(Y_i)|\right]\leq (\l!)^{1/2} \sum_{\l' \geq 2} \frac{|c_{\l'}(t)|}{(\l'!)^{1/2}}  <\infty$), and by applying \eqref{prop:hermite} with $\cov(Y_i, \wt{Y}_i)= \sigma_i$. This proves \eqref{decomp:htYicond}.

Finally, by using \eqref{decomp:htYicond}, \eqref{prop:hermite} and notation above, we have
\begin{align*}
  \E \left[\left( \frac{r_m}{m} \sum_{i=1}^m \E \left( h_t(Y_i) \cond \mathcal{F}_{i-1}\right) \right)^2 \right]
  &= \frac{r_m^2}{m^2} \sum_{i,j}  \E \left[ \E \left( h_t(Y_i) \cond \mathcal{F}_{i-1}\right) \E \left( h_t(Y_j) \cond \mathcal{F}_{j-1}\right) \right]\\
  &= \frac{r_m^2}{m^2} \sum_{i,j} \sum_{\l\geq 2}\sum_{\l'\geq 2} \frac{c_\l(t)}{\l !}\frac{c_{\l'}(t)}{(\l' !)}  \sigma_i^{\l}\sigma_j^{\l'} \E \left[H_\l (\wt{Y}_i)  H_{\l'}(\wt{Y}_j)\right]\\
  &=\frac{r_m^2}{m^2} \sum_{i,j} \sum_{\l\geq 2} \frac{c_\l(t)^2}{\l !}  \sigma_i^{\l}\sigma_j^{\l} \left(\E \left[\wt{Y}_i \wt{Y}_j\right]\right)^\l\\
  &\leq \left(\sum_{\l\geq 2} \frac{c_\l(t)^2}{\l !}\right) \left(\frac{r_m^2}{m^2} \sum_{i,j}  \left(\E \left[ \E \left( Y_i \cond \mathcal{F}_{i-1}\right) \E \left( Y_j \cond \mathcal{F}_{j-1}\right) \right] \right)^2\right),
      \end{align*}
which proves \eqref{conv2} by using \eqref{conv0bis}. Exactly the same calculation with ``$i=j$" shows \eqref{conv1} from \eqref{conv0}.
\end{proof}

\begin{lemma}\label{lemmalambda} Assume that $\Gamma^{(m)}$ satisfies  \eqref{vanish-secondorder} and that $r_m^2 \var(\ol{Y}_m)$ converges to some positive real number. Consider the $(m+1)\times(m+1)$ covariance matrix $\Lambda^{(m+1)}$ of $(Y_i)_{0\leq i \leq m}$ defined in Section~\ref{sec:prooffinitedimforGchap}. 
Then the rate 
$$r_{m+1}(\Lambda^{(m+1)})= \left((m+1)^{-1} + \left| (m+1)^{-2} \sum_{0\leq i\neq j\leq m} \Lambda_{i,j}^{(m+1)}\right| \right)^{-1/2}$$
satisfies $r_{m+1}(\Lambda^{(m+1)})\sim r_m$ and moreover
\begin{align}\label{toprovelemmalambda}
(m+1)^{-2} r_m^2  \sum_{0\leq i\neq j \leq m} \left(\Lambda^{(m+1)}_{i,j}\right)^2   = o(1).
\end{align}
In particular, $\Lambda^{(m+1)}$ satisfies \eqref{vanish-secondorder}. Finally, when \eqref{equtheta} holds for $\Gamma^{(m)}$,  it also holds for $\Lambda^{(m+1)}$, with the same value of $\theta$.
\end{lemma}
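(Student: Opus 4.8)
The plan is to reduce everything to three explicit facts about the extra row/column of $\Lambda^{(m+1)}$ indexed by $0$. First I would compute, for $1\le j\le m$,
\[
\Lambda^{(m+1)}_{0,j}=\cov(Y_0,Y_j)=(\var\ol{Y}_m)^{-1/2}\,m^{-1}\sum_{i=1}^m\Gamma_{i,j},
\]
together with the summary identities $\var\ol{Y}_m=m^{-1}+\gamma_m$ and, using $\sum_{j=1}^m\Lambda^{(m+1)}_{0,j}=\cov\big(Y_0,m\ol{Y}_m\big)=m(\var\ol{Y}_m)^{1/2}$,
\[
\sum_{0\le i\ne j\le m}\Lambda^{(m+1)}_{i,j}=m^2\gamma_m+2m(\var\ol{Y}_m)^{1/2}.
\]
I would also record the three elementary inequalities that do all the work: $r_m^2\le m$ and $r_m^2|\gamma_m|\le 1$ (both immediate from \eqref{rate}), and $(\var\ol{Y}_m)^{-1}=O(r_m^2)$, which follows from the standing hypothesis that $r_m^2\var\ol{Y}_m\to c>0$.

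Next I would prove \eqref{toprovelemmalambda}. Splitting the double sum into the block $1\le i\ne j\le m$ and the cross terms, the block part is bounded by $r_m^2 m^{-2}\sum_{i\ne j}(\Gamma_{i,j})^2=o(1)$ directly from \eqref{vanish-secondorder}. For the cross terms I write $s_j:=m^{-1}\sum_i\Gamma_{i,j}=m^{-1}(1+a_j)$ with $a_j:=\sum_{i\ne j}\Gamma_{i,j}$, so that $\sum_{j=1}^m s_j^2=m^{-2}\big(m+2m^2\gamma_m+\sum_j a_j^2\big)$; Cauchy--Schwarz gives $\sum_j a_j^2\le m\sum_{i\ne j}(\Gamma_{i,j})^2$, whence $\sum_j s_j^2\le m^{-1}+2|\gamma_m|+m^{-1}\sum_{i\ne j}(\Gamma_{i,j})^2$. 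Multiplying the cross-term contribution $2(\var\ol{Y}_m)^{-1}\sum_j s_j^2$ by $r_m^2(m+1)^{-2}$ and inserting $(\var\ol{Y}_m)^{-1}=O(r_m^2)$ produces three summands of respective orders $r_m^4m^{-3}$, $r_m^4m^{-2}|\gamma_m|$ and $r_m^4m^{-3}\sum_{i\ne j}(\Gamma_{i,j})^2$; each is $o(1)$ by the three inequalities above (the last one also using \eqref{vanish-secondorder}). This establishes \eqref{toprovelemmalambda}.

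To obtain $r_{m+1}(\Lambda^{(m+1)})\sim r_m$ I would compare $r_{m+1}(\Lambda^{(m+1)})^{-2}=(m+1)^{-1}+\big|(m+1)^{-2}(m^2\gamma_m+2m(\var\ol{Y}_m)^{1/2})\big|$ with $r_m^{-2}=m^{-1}+|\gamma_m|$. Since $|\Gamma_{i,j}|\le 1$ forces $\var\ol{Y}_m=m^{-1}+\gamma_m$ to stay bounded, the genuinely new term $b_m:=(m+1)^{-2}2m(\var\ol{Y}_m)^{1/2}$ is $O(m^{-1}(\var\ol{Y}_m)^{1/2})$, and I treat the two regimes of \eqref{equtheta} separately. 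When $\theta<\infty$ one has $\var\ol{Y}_m=O(m^{-1})$, so $b_m=O(m^{-3/2})$ is negligible against $m^{-1}\le r_m^{-2}$, while $(m+1)^{-2}m^2\gamma_m=\gamma_m(1+O(m^{-1}))$; when $\theta=\infty$ one has $m\gamma_m\to\infty$, hence $\var\ol{Y}_m\sim\gamma_m$, $r_m^{-2}\sim\gamma_m$, and $b_m=O(m^{-1}\gamma_m^{1/2})=o(\gamma_m)$ because $m\gamma_m^{1/2}=m^{1/2}(m\gamma_m)^{1/2}\to\infty$. In both cases the error terms are $o(r_m^{-2})$, giving $r_{m+1}(\Lambda^{(m+1)})^{-2}\sim r_m^{-2}$. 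Combined with \eqref{toprovelemmalambda} this immediately yields \eqref{vanish-secondorder} for $\Lambda^{(m+1)}$. Finally, \eqref{equtheta} for $\Lambda^{(m+1)}$ reduces to $(m+1)^{-1}\sum_{0\le i\ne j\le m}\Lambda^{(m+1)}_{i,j}=\tfrac{m^2}{m+1}\gamma_m+\tfrac{2m}{m+1}(\var\ol{Y}_m)^{1/2}\to\theta$: the first term tends to $\theta$, and the second tends to $0$ when $\theta<\infty$ (as $\var\ol{Y}_m\to0$) and is dominated by the first when $\theta=\infty$.

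The step I expect to be most delicate is the cross-term bound in \eqref{toprovelemmalambda}: the diagonal entries $\Gamma_{j,j}=1$ contribute the stray $m^{-1}$ inside $\sum_j s_j^2$, and one must check that after multiplication by $r_m^4(m+1)^{-2}$ (which can be of order $1$ when $r_m\sim\sqrt m$) this term, as well as the $|\gamma_m|$ term, still vanishes; this is exactly where the inequalities $r_m^2\le m$ and $r_m^2|\gamma_m|\le1$ are used sharply. The regime split for the rate equivalence and for \eqref{equtheta} is then routine by comparison.
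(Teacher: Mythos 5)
Most of your argument is correct and follows essentially the paper's route: the same decomposition of $\sum_{0\le i\neq j\le m}\bigl(\Lambda^{(m+1)}_{i,j}\bigr)^2$ into the $\Gamma$-block plus the cross terms $2\sum_{j}\bigl(\Lambda^{(m+1)}_{0,j}\bigr)^2$, with the cross terms controlled by Cauchy--Schwarz on the row sums. (The paper's bookkeeping is slightly different: it cancels $(\var\ol{Y}_m)^{-1}$ against the term $2m^{-2}\sum_{i,i'}\Gamma_{i,i'}=2\var\ol{Y}_m$ and then uses $m\var\ol{Y}_m\ge r_m^2\var\ol{Y}_m$ bounded away from $0$, whereas you keep $(\var\ol{Y}_m)^{-1}=O(r_m^2)$ and absorb the pieces with $r_m^2\le m$ and $r_m^2|\gamma_m|\le 1$; both are valid.) The transfer of \eqref{equtheta} at the end is also fine.

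There is, however, a genuine gap in your proof of $r_{m+1}(\Lambda^{(m+1)})\sim r_m$: you split according to the two regimes $\theta<\infty$ and $\theta=+\infty$, i.e.\ you invoke \eqref{equtheta}, but \eqref{equtheta} is not a hypothesis for that conclusion. In the lemma, only the final claim is conditional on \eqref{equtheta}; the rate equivalence, \eqref{toprovelemmalambda}, and the assertion that $\Lambda^{(m+1)}$ satisfies \eqref{vanish-secondorder} (which you deduce from the rate equivalence) are stated under \eqref{vanish-secondorder} and $r_m^2\var(\ol{Y}_m)\to c>0$ alone. If $m\gamma_m$ has no limit in $[-1,+\infty]$, neither of your two cases applies, so as written your proof establishes the lemma only under an extra hypothesis. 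The fix is short and is exactly what the paper does: since $r_m\ge 1$, \eqref{vanish-secondorder} gives $m^{-2}\sum_{i\neq j}(\Gamma_{i,j})^2\to 0$, hence by Cauchy--Schwarz $|\gamma_m|\le \bigl(m^{-2}\sum_{i\neq j}(\Gamma_{i,j})^2\bigr)^{1/2}\to 0$, so $\var(\ol{Y}_m)=m^{-1}+\gamma_m\to 0$ and your new term satisfies $b_m=O\bigl(m^{-1}(\var\ol{Y}_m)^{1/2}\bigr)=o(m^{-1})=o(r_m^{-2})$, uniformly and with no case distinction; the comparison of the remaining terms is then immediate. Alternatively, using only your standing hypotheses, $r_m^2 b_m=O\bigl(m^{-1}r_m(r_m^2\var\ol{Y}_m)^{1/2}\bigr)=O(m^{-1/2})$ because $r_m\le\sqrt m$; or one can argue along subsequences, since $m\gamma_m\in[-1,+\infty]$ always has convergent subsequences and an asymptotic equivalence can be verified subsequence-wise. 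Any of these one-line remarks closes the gap.
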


\begin{proof}
By definition,
\begin{align*}
 m^{-2} \sum_{0\leq i\neq j\leq m} \Lambda_{i,j} &= m^{-2}  \sum_{1\leq i\neq j \leq m} \Gamma_{i,j} + 2 m^{-2}\sum_{1\leq j \leq m} \Lambda_{0,j}.\end{align*}
 Since $\Lambda_{0,j}=(\var \ol{Y}_m)^{-1/2} m^{-1} \sum_{i=1}^m \Gamma_{i,j}$, we have
\begin{align*}
 m^{-2}\sum_{1\leq j \leq m} \Lambda_{0,j} &= (\var \ol{Y}_m)^{-1/2} m^{-1} m^{-2} \sum_{1\leq i,j\leq m} \Gamma_{i,j}= m^{-1} \left(m^{-2} \sum_{1\leq i,j\leq m} \Gamma_{i,j}\right)^{1/2},
 \end{align*}
which is $o(1/m)$ because  $\Gamma$ satisfies  \eqref{vanish-secondorder} and thus \eqref{weakdep}. This implies $r_{m+1}(\Lambda)\sim r_m$.
Next, we establish \eqref{toprovelemmalambda}. Let us write
\begin{align*}
(m+1)^{-2} r_m^2  \sum_{0\leq i\neq j \leq m} \left(\Lambda_{i,j}\right)^2   = (m+1)^{-2} r_m^2 \left( \sum_{1\leq i\neq j \leq m} \left(\Gamma_{i,j}\right)^2 +  2 \sum_{1\leq j \leq m} \left(\Lambda_{0,j}\right)^2\right).
\end{align*}
Furthermore, we have
\begin{align*}
 \sum_{1\leq j \leq m} \left(\Lambda_{0,j}\right)^2 &= (\var \ol{Y}_m)^{-1} \sum_{1\leq j \leq m} \left( m^{-1} \sum_{i=1}^m \Gamma_{i,j}\right)^2 \\
 &\leq (\var \ol{Y}_m)^{-1} m^{-2} \sum_{1\leq i,i' \leq m} \left( 2 \Gamma_{i,i'} 
 + \sum_{j\notin\{i,i'\}} \Gamma_{i,j}\Gamma_{i',j}\right)\\
 &\leq 2 + (m\var \ol{Y}_m)^{-1}\sum_{1\leq i\neq j \leq m}  \left( \Gamma_{i,j}\right)^2.
 \end{align*}
This implies the result, because $m \var \ol{Y}_m \geq r_m^2  \var \ol{Y}_m$, which is bounded away from $0$ by assumption.
\end{proof}

\section{Results related to Hermite polynomials}\label{sec:Hermite}

  Let us first recall that the sequence of Hermite polynomials $H_\l(x)$, $\l\geq 0$, $x\in \R$, is defined by the expression: for all $\l\geq 0$, 
 \begin{equation}\label{def:hermite}
\forall x\in\R, \:\phi^{(\l)} (x) = (-1)^\l H_\l(x) \phi(x),
 \end{equation}
 where $\phi(x)=(2\pi)^{-1/2} \exp(-x^2/2)$ is the density of a Gaussian standard variable and $\phi^{(\l)}$ denotes its $\l$-th derivative (by convention, $\phi^{(0)}=\phi$). For instance, we have $H_0(x)=1$, $H_1(x)=x$ and $H_2(x)=x^2-1$. 
 
 A well known fact is that $\{H_\l(\cdot)/(\l !)^{1/2}, \l\geq 0\}$  is an Hilbert basis in $L^2(\R,\mtc{N}(0,1))$, the Hilbert space 
composed by 
square integrable functions w.r.t. the  standard Gaussian measure. Moreover, the following property holds: for any centered $2$-dimensional Gaussian vector $(U,V)$ with $\E U^2=\E V^2=1$,
 \begin{equation}\label{prop:hermite}
\forall \l,\l' \geq 0, \l\neq\l', \:\:\E (H_\l(U)H_{\l'}(V) ) = (\cov(U,V))^\l \:\l ! \:\delta_{\l,\l'}.
 \end{equation}
 The latter can be seen as a consequence of Mehler's formula, itself being nicely presented in \cite{Foa1981} (1.4) (see also references therein).

\paragraph{Proof of Proposition~\ref{prop:covG}}
  
Let us start by expanding, for any $t\in[0,1]$, the function $\ind{\Phi(\cdot)\leq t}$ w.r.t. the Hermite polynomial basis in $L^2(\R,\mtc{N}(0,1))$: 
 \begin{equation}\label{expan:indx}
\ind{\Phi(\cdot)\leq t}  =  \sum_{\l\geq 0} c_\l(t) H_\l(\cdot) / (\l !).
 \end{equation}
 By applying \eqref{expan:indx} at $Y_i$, we obtain the following expansion in $L^2(\P_m)$: for all $i=1,\dots,m$,
 \begin{equation}\label{expan:indYi}
\ind{\Phi(Y_i)\leq t}  =  \sum_{\l\geq 0} c_\l(t) H_\l(Y_i) / (\l !).
 \end{equation}
By averaging w.r.t. $i$, we obtain %the following expansion in $L^2(\P_m)$:
\begin{align}\label{expan:Gchap}
\G(t) - t = \sum_{\l\geq 1} \frac{c_\l(t)}{\l !} m^{-1} \sum_{i=1}^m H_\l(Y_i) .
\end{align} 
where the series in the RHS of \eqref{expan:Gchap} converges in $L^2(\P_m)$ (by using the triangle inequality).
The proof is finished  by combining \eqref{expan:Gchap} with \eqref{prop:hermite}.\\

Next, the following proposition shares some similarities with Lemma~4.5 of \cite{Taq1977} and Lemma~3 of \cite{CM1996}.
\begin{proposition}\label{KSprop} 
Consider an integer $d\geq 2$,  a positive number $\rho$ such that $\sqrt{3\rho} d<1$ and $Z\sim\mathcal{N}(0,1)$. Let $g_1,\dots,g_d$ be $d$ measurable real functions defined on $\R$ such that $\E\left(|g_i(Z)|^{4/3}\right)<+\infty$, $1\leq i \leq d$. 
Let $(U_1,\dots,U_d)$ be $d$-dimensional centered Gaussian vector with $\E U_i^2=1$, $1\leq i \leq  d$, and $|\E(U_i U_j)|\leq \rho$, $1\leq i \neq j \leq  d$. Then the following holds:
\begin{align}
\E\left[\prod_{i=1}^d \left|g_i(U_i)\right|\right] &\leq  \frac{1}{(1-\sqrt{3\rho} d)^d} \prod_{i=1}^d \left(\E\left(|g_i(Z)|^{4/3}\right)\right)^{3/4}\label{KSupperbound};
\end{align}
Furthermore, if $\E(g_i(Z))=0$  and  $\E(Zg_i(Z))=0$ for $1\leq i \leq d'$ for an integer $d'$, $1\leq d'\leq d$, we have 
\begin{align}
\left|\E\left[\prod_{i=1}^d g_i(U_i)\right] \right|&\leq \rho^{d'}  \frac{(3d^2)^{d'}}{(1-\sqrt{3\rho} d)^d} \prod_{i=1}^d \left(\E\left(|f_i(Z)|^{4/3}\right)\right)^{3/4}\label{KSupperbound2},
\end{align}
where $f_i$ is any function such that $f_i(x)=g_i(x)-\alpha_i-\beta_i x$, $x\in\R$, $\alpha_i,\beta_i\in \R$, for $1\leq i \leq d'$ and $f_i=g_i$ otherwise.
\end{proposition}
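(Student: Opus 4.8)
The plan is to expand the joint density of $(U_1,\dots,U_d)$ around the independent case and integrate term by term. Writing $\Sigma$ for the covariance matrix of $(U_1,\dots,U_d)$ (unit diagonal, off-diagonal entries bounded by $\rho$ in modulus) and $p_\Sigma$ for its density, the multivariate Mehler--Wick expansion generalising \eqref{prop:hermite} reads
\[
\frac{p_\Sigma(x)}{\prod_{i=1}^d\phi(x_i)}=\sum_{(k_{ij})_{i<j}}\Bigg(\prod_{i<j}\frac{(\Sigma_{ij})^{k_{ij}}}{k_{ij}!}\Bigg)\prod_{i=1}^d H_{d_i}(x_i),\qquad d_i=\sum_{j\neq i}k_{ij},
\]
the sum running over symmetric arrays of non-negative integers with vanishing diagonal, the assumption $\sqrt{3\rho}\,d<1$ guaranteeing absolute convergence. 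I would first record the one-variable estimate
\[
\big|\E[g_i(Z)H_{k}(Z)]\big|\le \big(\E|g_i(Z)|^{4/3}\big)^{3/4}\big(\E H_{k}(Z)^{4}\big)^{1/4}\le 3^{k/2}(k!)^{1/2}\big(\E|g_i(Z)|^{4/3}\big)^{3/4},
\]
obtained by Hölder with conjugate exponents $4/3$ and $4$ (this is exactly where $\E|g_i(Z)|^{4/3}<\infty$ is used) together with the Gaussian hypercontractive bound $\|H_k\|_{L^4(\gamma)}\le 3^{k/2}\|H_k\|_{L^2(\gamma)}$, which is the source of the constant $3$.

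For \eqref{KSupperbound} I would plug the $|g_i|$ into the expansion, bound each term in absolute value and use Tonelli to factorise $\int \prod_i |g_i(x_i)|\,|H_{d_i}(x_i)|\,\prod_i\phi(x_i)\,dx=\prod_i\E[\,|g_i(Z)|\,|H_{d_i}(Z)|\,]$. Combined with the previous display and $|\Sigma_{ij}|\le\rho$, this majorises $\E[\prod_i|g_i(U_i)|]$ by $\prod_i(\E|g_i(Z)|^{4/3})^{3/4}$ times the purely combinatorial series $\sum_{(k_{ij})}\big(\prod_{i<j}(3\rho)^{k_{ij}}/k_{ij}!\big)\prod_i(d_i!)^{1/2}$, where I used $\prod_i 3^{d_i/2}=\prod_{i<j}3^{k_{ij}}$. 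Bounding $(d_i!)^{1/2}$ by a multinomial estimate that redistributes a factor $(k_{ij}!)^{1/2}$ to each endpoint of a bond turns this into a geometric series summing to a factor of the form $(1-\sqrt{3\rho}\,d)^{-d}$, which is \eqref{KSupperbound}.

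For \eqref{KSupperbound2} I keep the signs and integrate the expansion termwise, so that $\E[\prod_i g_i(U_i)]=\sum_{(k_{ij})}\big(\prod_{i<j}(\Sigma_{ij})^{k_{ij}}/k_{ij}!\big)\prod_i\E[g_i(Z)H_{d_i}(Z)]$. The key gain is that when $g_i$ is orthogonal to $1$ and to $x$ (for $i\le d'$) the factor $\E[g_i(Z)H_{d_i}(Z)]$ vanishes unless $d_i\ge 2$; moreover for such $d_i$ it equals $\E[f_i(Z)H_{d_i}(Z)]$ since $H_{d_i}\perp\{1,x\}$, which legitimises replacing $g_i$ by $f_i$. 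Forcing $d_i\ge2$ at each of the $d'$ distinguished vertices costs at least $2d'$ bond-endpoints, hence at least $\rho^{d'}$ from the correlation factors; isolating this power and absorbing the combinatorial count of admissible degree configurations into $(3d^2)^{d'}$ yields \eqref{KSupperbound2} with the same denominator as before.

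The main obstacle is the combinatorial bookkeeping of the last two paragraphs: justifying the term-by-term integration of the multivariate expansion under the mere $L^{4/3}$ integrability (the hypercontractive $L^4$ control of $H_{d_i}$ is precisely what bridges this gap), and then summing the diagram series while correctly tracking the degree constraints so as to extract exactly the factor $\rho^{d'}$ with explicit constants. Matching the clean but deliberately non-sharp form $(1-\sqrt{3\rho}\,d)^{-d}$ and $(3d^2)^{d'}$ is then a matter of crude majorisation once this structure is in place.
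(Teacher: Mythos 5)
Your proposal is correct and follows essentially the same route as the paper's proof: your ``Mehler--Wick'' expansion is precisely the Kibble--Slepian formula the paper starts from, your H\"older$(4/3,4)$ step with $\left(\E H_k(Z)^4\right)^{1/4}\le 3^{k/2}(k!)^{1/2}$ is exactly the paper's Lemma~\ref{lemnormherm} (with $p=4$), and your derivation of \eqref{KSupperbound2} by forcing degree at least $2$ at the $d'$ distinguished vertices and using $\E[g_i(Z)H_\ell(Z)]=\E[f_i(Z)H_\ell(Z)]$ for $\ell\ge 2$ matches the paper's argument, including the justification of term-by-term integration via the absolute convergence of the diagram series. The only difference is organizational: the paper splits each factor $|\E(U_iU_j)|^{\nu_{ij}}/\nu_{ij}!$ evenly between its two endpoints and sums vertex-by-vertex after relaxing the symmetry constraint, obtaining $(1-\sqrt{3\rho}\,d)^{-d}$ directly, whereas your bond-by-bond geometric series gives $(1-3\rho d)^{-d(d-1)/2}$; this is in fact a stronger bound (one checks $(1-3\rho d)^{-d(d-1)/2}\le(1-\sqrt{3\rho}\,d)^{-d}$ whenever $\sqrt{3\rho}\,d<1$, e.g.\ via $\log\frac{1}{1-x}\le\frac{x}{1-x}$), so your ``crude majorisation'' remark is justified.
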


\begin{proof}
The Kibble-Slepian formula \cite{Kib1945,Sle1972} (given, e.g., in expression (2.2) of \citealp{Foa1981}) provides that
\begin{align}
\E\left[\prod_{i=1}^d g_i(U_i)\right] &= \E\left(\sum_{\nu} \prod_{i<j} \frac{(\E(U_i U_j))^{\nu_{ij}}}{\nu_{ij} !} .  \prod_{i=1}^d g_i(Z) H_{\nu_{i.}}(Z)\right)\nonumber\\
&= \sum_{\nu} \prod_{i<j} \frac{(\E(U_i U_j))^{\nu_{ij}}}{\nu_{ij} !} .  \prod_{i=1}^d \E(g_i(Z) H_{\nu_{i.}}(Z)), \label{KSformula}
\end{align}
where the summation is over all the $d\times d$ symmetric matrix $\nu=(\nu_{ij})_{1\leq i,j\leq d}$ with nonnegative integral entries and with diagonal entries equal to zero, while $\nu_{i.}$ denotes $\nu_{i1}+\dots+\nu_{id}$.
Above, we have implicitly  used Fubini's theorem (the summation over $\nu$ is infinite). 
The next calculations show that this is indeed valid: by using the assumptions, we have 
\begin{align} 
&\sum_{\nu} \prod_{i<j} \frac{|\E(U_i U_j)|^{\nu_{ij}}}{\nu_{ij} !} .  \prod_{i=1}^d \E|g_i(Z) H_{\nu_{i.}}(Z)|\nonumber\\
&\leq  \sum_{\nu}  \prod_{i=1}^d \left(  \frac{ \rho^{\nu_{i.}}}{\prod_{j} \nu_{ij} !} \right)^{1/2} \E|g_i(Z) H_{\nu_{i.}}(Z)|\nonumber\\
&\leq  \sum_{x_1,\dots, x_d \in \mathbb{N}^d} \prod_{i=1}^d \left(  \frac{ \rho^{x_{i.}}}{\prod_{j} x_{ij} !} \right)^{1/2} \E|g_i(Z) H_{x_{i.}}(Z)|\nonumber\\
&=\prod_{i=1}^d\left[\sum_{y \in \mathbb{N}^d} \left(  \frac{ \rho^{y_{1}+\dots + y_d}}{\prod_{j} y_{j} !} \right)^{1/2} \E|g_i(Z) H_{y_{1}+\dots + y_d}(Z)| \right]\nonumber\\
&=\prod_{i=1}^d\left[ \sum_{\l\geq 0} \rho^{ \l/2} \E\left|g_i(Z) H_{\l}(Z) / (\l !)^{1/2}\right|\sum_{\substack{y \in \mathbb{N}^d\\ y_{1}+\dots + y_d=\l}} \left(  \frac{ \l!}{\prod_{j} y_{j} !} \right)^{1/2}    \right].\label{hotcomput}
\end{align}
Now, in the latter display, the sum over $y$ is upper bounded by $d^\l$, which gives that the RHS of \eqref{hotcomput} is upper bounded by
\begin{align*}
 \sum_{\l\geq 0} (\rho d^2)^{ \l/2}  \E\left|g_i(Z) H_{\l}(Z) / (\l !)^{1/2}\right|  
&\leq \left(\sum_{\l\geq 0} (3\rho d^2)^{ \l/2} \right) \left(\E\left(|g_i(Z)|^{4/3}\right)\right)^{3/4},
\end{align*}
where the latter combines H\"older's inequality with Lemma~\ref{lemnormherm} (used with $p=4$). 
This proves \eqref{KSupperbound} and shows that Fubini's theorem can be applied to get \eqref{KSformula}.

Finally, we prove \eqref{KSupperbound2} by using \eqref{KSformula} and the same calculations as above, except that the absolute values should be kept outside the expectations. As a result, for $1\leq i \leq d'$, since $\E(g_i(Z) H_{\l}(Z))=0$ for $\l=0,1$ by assumption, the corresponding sums over $\l$ start at $\l=2$. This establishes \eqref{KSupperbound2}, because for all $\l\geq 2$ and $1\leq i \leq d'$, $\E(g_i(Z) H_{\l}(Z))=\E(f_i(Z) H_{\l}(Z))$.
\end{proof}

The following result was obtained in the proof of Lemma~3.1 in \cite{Taq1977}. We provide an elementary proof below.
Also, let us mention that there are more accurate such results when $\l$ grows to infinity, see Theorem~2.1 in \cite{Lar2002}.

\begin{lemma}\label{lemnormherm}
For all even integer $p\geq 2$ and $\l \geq 0$, we have
$\left[\E \left(H_\l(Z)/\sqrt{\l !}\right)^{p}\right]^{1/p} \leq (p-1)^{\l/2},$ for $Z\sim\mathcal{N}(0,1).$
\end{lemma}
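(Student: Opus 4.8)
The plan is to set up a recursion in $\l$ for the $p$-th moment of $H_\l(Z)$ by means of Gaussian integration by parts, and then close it by induction. Throughout, fix an even integer $p\ge 2$ and write $M_\l = \E\left((H_\l(Z))^{p}\right)$, which is well defined and nonnegative precisely because $p$ is even. The two classical facts about Hermite polynomials I would use are the derivative rule $H_\l'(x)=\l\,H_{\l-1}(x)$ and the identity $H_\l(x)\phi(x) = -\frac{d}{dx}\left(H_{\l-1}(x)\phi(x)\right)$, the latter being immediate from the defining relation \eqref{def:hermite} (differentiate $H_{\l-1}\phi=(-1)^{\l-1}\phi^{(\l-1)}$).

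First I would establish the integration-by-parts identity $\E\left(H_\l(Z)g(Z)\right)=\E\left(H_{\l-1}(Z)g'(Z)\right)$, valid for every polynomial $g$. This follows by writing the left-hand side as $\int H_\l(x)g(x)\phi(x)\,dx$, substituting $H_\l(x)\phi(x)=-\frac{d}{dx}\left(H_{\l-1}(x)\phi(x)\right)$, and integrating by parts; the boundary terms vanish since $H_{\l-1}(x)\phi(x)\to 0$ at $\pm\infty$.

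Next I would apply this identity with the choice $g=(H_\l)^{p-1}$. Since $g'=(p-1)(H_\l)^{p-2}H_\l'=(p-1)\l\,(H_\l)^{p-2}H_{\l-1}$, I obtain
\begin{equation*}
M_\l=\E\left((H_\l(Z))^p\right)=(p-1)\l\,\E\left((H_{\l-1}(Z))^{2}(H_\l(Z))^{p-2}\right).
\end{equation*}
Both factors $(H_{\l-1})^2$ and $(H_\l)^{p-2}$ are nonnegative (again using that $p$ is even), so Hölder's inequality with exponents $p/2$ and $p/(p-2)$ gives $\E\left((H_{\l-1})^{2}(H_\l)^{p-2}\right)\le M_{\l-1}^{2/p}M_\l^{(p-2)/p}$. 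Combining these yields $M_\l\le (p-1)\l\,M_{\l-1}^{2/p}M_\l^{(p-2)/p}$, and dividing by $M_\l^{(p-2)/p}$ produces the clean recursion $M_\l^{2/p}\le (p-1)\l\,M_{\l-1}^{2/p}$. Setting $N_\l=M_\l^{2/p}/\l!$, this reads $N_\l\le (p-1)N_{\l-1}$, with $N_0=1$ because $H_0\equiv 1$; by induction $N_\l\le (p-1)^{\l}$, which is exactly $\left(\E\left((H_\l(Z)/\sqrt{\l!})^{p}\right)\right)^{2/p}\le (p-1)^{\l}$, i.e. the claim after a final square root.

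The argument is short, so there is no serious obstacle; the one step that makes it work is the choice $g=(H_\l)^{p-1}$, since it is this that lowers a single index to $\l-1$ while the differentiation reproduces exactly one factor $H_{\l-1}$, so that the right-hand side involves only the two consecutive moments $M_{\l-1}$ and $M_\l$ and can be closed by Hölder. This neatly sidesteps the messier route of expanding $(H_\l)^p$ through Hermite linearization formulas. I would also note that the sharp constant $(p-1)^{\l/2}$ can alternatively be read off from Nelson's hypercontractive inequality, using that $H_\l$ is an eigenfunction of the Ornstein–Uhlenbeck semigroup; the elementary recursion above has the merit of avoiding that machinery entirely.
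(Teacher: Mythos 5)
Your proof is correct and is essentially identical to the paper's: both use Gaussian integration by parts (the paper via the defining relation $H_\l\phi=(-1)^\l\phi^{(\l)}$, you via the equivalent identity $\E(H_\l(Z)g(Z))=\E(H_{\l-1}(Z)g'(Z))$ with $g=(H_\l)^{p-1}$) to obtain $M_\l=(p-1)\l\,\E\left((H_{\l-1}(Z))^2(H_\l(Z))^{p-2}\right)$, then H\"older with exponents $p/2$ and $p/(p-2)$, and induction on $\l$. The packaging as a clean recursion for $N_\l=M_\l^{2/p}/\l!$ is a nice presentational touch but not a different argument.
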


\begin{proof}
For some $\l\geq 1$, by using $H_\l'=\l H_{\l-1}$ and \eqref{def:hermite}, we obtain
\begin{align*}
\int [H_\l(x)]^p \phi(x)dx&=(-1)^\l \int [H_\l(x)]^{p-1} \phi^{(\l)}(x) \, dx,\\
&=\l(p-1) \int [H_\l(x)]^{p-2} [H_{\l-1}(x)]^2 \phi(x)\, dx.
\end{align*}
Next, by using H\"older's inequality, we get
$\left(\int [H_\l(x)]^p \phi(x)dx\right)^{2/p}\leq \l (p-1)
\left(\int |H_{\l-1}(x)|^p \phi(x)dx\right)^{2/p},$
and the result is obtained by induction on $\l$.
\end{proof}

\begin{lemma}\label{conseqMelher} 
Consider the function $h_t(\cdot)$ defined by \eqref{defht} and  $c_\l(\cdot)$ defined by \eqref{expan:clt}. 
Let us consider a two-dimensional centered Gaussian vector $(U,V)$ with $ \E U^2 = \E V^2=1$.
Then for any $t,s \in[0,1]$, the following holds:
\begin{align}
\E(h_t(U)h_s(V)) = \sum_{\l\geq 2} \frac{c_\l(t)c_\l(s)}{\l!} (\cov(U,V))^\l \label{equ-2fact}.
 \end{align}
\end{lemma}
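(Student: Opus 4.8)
The plan is to expand both $h_t$ and $h_s$ in the Hermite basis of $L^2(\R,\mtc{N}(0,1))$ and then evaluate $\E(h_t(U)h_s(V))$ by applying the orthogonality relation \eqref{prop:hermite} term by term; the only genuine difficulty is justifying the interchange of expectation and infinite summation when $U$ and $V$ are correlated.

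First I would pin down the Hermite expansion of $h_t$. Starting from \eqref{expan:indx}, namely $\ind{\Phi(\cdot)\leq t}=\sum_{\l\geq 0}c_\l(t)H_\l(\cdot)/(\l!)$ in $L^2(\R,\mtc{N}(0,1))$, I would identify the two lowest coefficients: since $H_0\equiv 1$, the coefficient $c_0(t)=\E(\ind{\Phi(Z)\leq t})=t$ (because $\Phi(Z)$ is uniform on $[0,1]$ for $Z\sim\mtc{N}(0,1)$), and $c_1(t)=\phi(\Phi^{-1}(t))$ multiplies $H_1(x)=x$. Subtracting these two terms gives precisely $h_t(x)=\ind{\Phi(x)\leq t}-t-c_1(t)x=\sum_{\l\geq 2}\frac{c_\l(t)}{\l!}H_\l(x)$, an $L^2$-convergent expansion supported on orders $\l\geq 2$. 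Note that $h_t\in L^2(\R,\mtc{N}(0,1))$ since the indicator is bounded and $x\mapsto x$ is square-integrable, so $\sum_{\l\geq 2}c_\l(t)^2/\l!=\|h_t\|^2<\infty$, and likewise for $h_s$.

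Next I would pass to the correlated pair $(U,V)$ by truncation. Writing $S_N^t=\sum_{\l=2}^N \frac{c_\l(t)}{\l!}H_\l$, the convergence $S_N^t\to h_t$ in $L^2(\R,\mtc{N}(0,1))$ transfers to $S_N^t(U)\to h_t(U)$ in $L^2(\P)$ because $U\sim\mtc{N}(0,1)$, and similarly $S_N^s(V)\to h_s(V)$; by continuity of the inner product (Cauchy--Schwarz) this yields $\E(S_N^t(U)S_N^s(V))\to\E(h_t(U)h_s(V))$. Each prelimit is a finite sum computed exactly by the orthogonality relation \eqref{prop:hermite}, for which $\E(H_\l(U)H_{\l'}(V))=(\cov(U,V))^\l\,\l!\,\delta_{\l,\l'}$, so that $\E(S_N^t(U)S_N^s(V))=\sum_{\l=2}^N\frac{c_\l(t)c_\l(s)}{\l!}(\cov(U,V))^\l$.

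Finally I would let $N\to\infty$ and identify the limit. Since $|(\cov(U,V))^\l|\leq 1$ and, by Cauchy--Schwarz, $\sum_{\l\geq 2}\frac{|c_\l(t)c_\l(s)|}{\l!}\leq \|h_t\|\,\|h_s\|<\infty$, the series $\sum_{\l\geq 2}\frac{c_\l(t)c_\l(s)}{\l!}(\cov(U,V))^\l$ converges absolutely, and it is exactly the limit of the prelimits, which establishes \eqref{equ-2fact}. The main (and essentially only) obstacle is this interchange of expectation and infinite summation in the dependent case, including the degenerate limit $|\cov(U,V)|=1$; the truncation argument sidesteps any termwise domination issue by working with finite partial sums and invoking only the global $L^2$ bound $\sum_\l c_\l^2/\l!<\infty$ together with $L^2(\P)$-continuity.
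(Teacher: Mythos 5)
Your proof is correct, and its skeleton coincides with the paper's: expand $h_t$ and $h_s$ in the Hermite basis (identifying $c_0(t)=t$ and the coefficient $c_1(t)$ of $H_1$, so that $h_t=\sum_{\l\geq 2}\frac{c_\l(t)}{\l!}H_\l$ in $L^2(\R,\mtc{N}(0,1))$) and then apply the orthogonality relation \eqref{prop:hermite}. Where you genuinely diverge is in the justification of the interchange of expectation and summation: the paper's proof is a one-liner invoking \eqref{prop:hermite} \emph{and Fubini's theorem}, whereas you avoid Fubini altogether by truncating both expansions and using $L^2(\P)$-convergence of $S_N^t(U)$ and $S_N^s(V)$ together with Cauchy--Schwarz continuity of the inner product. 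This difference has some substance. A literal term-by-term Fubini--Tonelli verification would require absolute summability of the doubly-indexed series, and the only uniform bound available, $\E\left|H_\l(U)H_{\l'}(V)\right|\leq\sqrt{\l!\,\l'!}$, reduces that check to the finiteness of $\sum_\l \left|c_\l(t)\right|/\sqrt{\l!}$, which in fact fails: for fixed $t$ the envelope of $\left|c_\l(t)\right|/\sqrt{\l!}$ decays only like $\l^{-3/4}$ (by the pointwise asymptotics of Hermite polynomials), so the series diverges. A rigorous version of the paper's argument therefore needs either a conditional (Mehler-kernel) formulation or precisely the kind of $L^2$ limiting argument you give. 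Your route buys two things: it needs only the global bound $\sum_\l c_\l(t)^2/\l!=\norm{h_t}^2<\infty$ rather than any termwise domination, and it covers the degenerate case $\left|\cov(U,V)\right|=1$ with no extra work.
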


\begin{proof}
Expression \eqref{equ-2fact} is a direct consequence of \eqref{prop:hermite} and of Fubini's theorem.
\end{proof}

\begin{lemma}\label{lem:majc1}
The function $c_1(\cdot)= \phi(\Phi^{-1}(\cdot))$ satisfies the following: for all $\nu\in(0,1)$, there exists some constant $C_\nu>0$ such that for all $s,t\in[0,1]$,
\begin{align}\label{majc1}
|c_1(t)-c_1(s)|\leq C_\nu |t-s|^{1-\nu}.
\end{align}
\end{lemma}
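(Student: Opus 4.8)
The plan is to realize $c_1$ as the integral of a derivative that is unbounded only at the two endpoints, and whose blow-up there is so slow that it is integrable against every power $u^{-\nu}$ with $\nu<1$. Concretely, $c_1=\phi\circ\Phi^{-1}$ is $C^\infty$ on the open interval $(0,1)$, so I would first compute its derivative by the chain rule. Using $\phi'(u)=-u\,\phi(u)$ together with $(\Phi^{-1})'(t)=1/\Phi'(\Phi^{-1}(t))=\mp 1/\phi(\Phi^{-1}(t))$, the factors $\phi(\Phi^{-1}(t))$ cancel and one gets $|c_1'(t)|=|\Phi^{-1}(t)|$ for every $t\in(0,1)$.

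The key estimate is then to control the growth of $|\Phi^{-1}(t)|$ as $t\to 0^+$ and $t\to 1^-$. Since $\Phi(z)=\P(Z\ge z)$, the standard Mills-ratio tail bound $\Phi(z)\le z^{-1}\phi(z)\le e^{-z^2/2}$ (valid for $z\ge 1$) inverts to $|\Phi^{-1}(t)|\le \sqrt{2\log(1/t)}$ for all $t$ small enough; by the symmetry $\Phi^{-1}(1-t)=-\Phi^{-1}(t)$ one gets $|\Phi^{-1}(t)|\le\sqrt{2\log(1/(1-t))}$ near $t=1$. Because $\sqrt{\log(1/u)}\,u^{\nu}\to 0$ as $u\to0^+$ for every $\nu>0$, these two bounds, combined with the fact that $c_1'$ is bounded on every compact subinterval of $(0,1)$, yield a single uniform bound of the form $|c_1'(t)|\le C_\nu\,(\min(t,1-t))^{-\nu}$ on all of $(0,1)$, for any prescribed $\nu\in(0,1)$.

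Finally I would integrate this bound. For $0<s<t\le 1/2$ one has $|c_1(t)-c_1(s)|\le C_\nu\int_s^t u^{-\nu}\,du=\tfrac{C_\nu}{1-\nu}\,(t^{1-\nu}-s^{1-\nu})\le \tfrac{C_\nu}{1-\nu}\,(t-s)^{1-\nu}$, where the last step uses the subadditivity of $u\mapsto u^{1-\nu}$ (namely $t^{1-\nu}\le s^{1-\nu}+(t-s)^{1-\nu}$); the symmetric computation with $1-u$ handles $1/2\le s<t<1$. For the mixed case $s\le 1/2\le t$ I would split at $1/2$ and apply the triangle inequality, bounding each half by the previous estimates and using $(t-1/2)^{1-\nu},(1/2-s)^{1-\nu}\le (t-s)^{1-\nu}$. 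Passing from $(0,1)$ to the closed interval $[0,1]$ is immediate from the continuity of $c_1$ with $c_1(0)=c_1(1)=0$, which completes the proof.

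The main obstacle is precisely the endpoint analysis of the second paragraph: one must verify that the singularity of $c_1'$ is of logarithmic-square-root type and is therefore dominated by $u^{-\nu}$ for every $\nu\in(0,1)$. This is exactly where the Gaussian tail estimate is needed, and it also explains why a genuine Lipschitz bound (the case $\nu=0$) cannot hold, since $c_1'=\pm\Phi^{-1}$ is unbounded near $0$ and $1$.
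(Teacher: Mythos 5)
Your proof is correct and follows essentially the same route as the paper: identify $c_1'=\pm\Phi^{-1}$ on $(0,1)$, bound $|\Phi^{-1}(u)|\leq\sqrt{2\log(1/u)}\leq C_\nu u^{-\nu}$ near the endpoint via the Gaussian tail estimate, integrate, and conclude with the subadditivity $t^{1-\nu}\leq s^{1-\nu}+(t-s)^{1-\nu}$. In fact your treatment of the endpoint $t\to1$ (symmetry $\Phi^{-1}(1-t)=-\Phi^{-1}(t)$ plus splitting at $1/2$) is more careful than the paper's, whose intermediate claim that $|\Phi^{-1}(u)|\leq C''_\nu u^{-\nu}$ holds for \emph{all} $u\in(0,1)$ overlooks the blow-up at $u=1$ and is repaired by exactly the symmetry argument you give.
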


\begin{proof}
First note that the derivative of $c_1$ on $(0,1)$ is $\Phi^{-1}$.
Classically (see, e.g., Lemma~12.3 of \cite{ABDJ2006}), there is some $x_0\in(0,1/2)$ such that for any $u\in(0,x_0)$, $\Phi^{-1}(u)\leq \sqrt{2\log (1/u)}.$ Also, obviously, for some fixed $\nu>0$, there is some $C'_\nu>0$ such that for any $u\in(0,x_0)$, $\sqrt{2\log (1/u)}\leq C'_\nu u^{-\nu}$. 
As a consequence, since $|\Phi^{-1}|$ is bounded on $[x_0,1-x_0]$, there exists some constant $C''_\nu>0$ such that for all $u\in(0,1)$, $|\Phi^{-1}(u)|\leq C''_\nu u^{-\nu}$.
This entails that for all $0<s\leq t<1$,
$$
| c_1(t)-c_1(s) |\leq \int_s^t | \Phi^{-1} (u)| du \leq \frac{C''_\nu}{1-\nu} (t^{1-\nu} - s^{1-\nu}) \leq C_\nu (t-s)^{1-\nu}
$$
 by letting $C_\nu =C''_\nu/(1-\nu)>0$ and because $(x+y)^\delta\leq x^\delta+y^\delta$ for any $x,y\geq 0$ and any $\delta\in(0,1)$.
\end{proof}

\section{Useful auxiliary results} \label{sec:iff}

The following result can certainly be considered as well known, although we failed to find a precise reference for it. It can be seen as a reformulation in our framework of classical tightness results as given, e.g., in Lemma~2 of \cite{CM1996}, in Remark~2.1 of \cite{SY1996}  and Proposition 6 of \cite{DP2007}.

  \begin{proposition}[Tightness criterion for empirical distribution function with non-standard scaling parameters]\label{prop:tight}
Consider $\xi_1,\dots,\xi_m$ real random variables  (that need not to be independent or identically distributed) such that 
$\overline{\xi}_m\xrightarrow{P} 0$ as $m$ tends to infinity, for $\overline{\xi}_m=m^{-1}\sum_{i=1}^m \xi_m$, and consider the process
 $$Z_m(t)= (a_m/m) \sum_{i=1}^m g_t(\xi_i), \mbox{ for $t\in[0,1]$,}$$
where $(a_m)_m$ is some positive sequence tending to infinity as $m$ tends to infinity and where $g_t(x)=\ind{\pnorm(x)\leq t}-f_0(t)-f_1(t) x$ for functions $f_0,f_1$ on $[0,1]$ such that $|f_0(t)-f_0(s)| \vee |f_1(t)-f_1(s)| \leq L |t-s|^q$, $0\leq s,t\leq 1$,  for some $q\in(0,1]$ and $L>0$.
Assume that the following holds: for large $m$,
\begin{equation}\label{tight-crit}
\E \big| Z_m(t)-Z_m(s) \big|^\kappa
\leq   C\big( |t-s|^{\delta_1} + (a_m)^{-\delta_2/q} |t-s|^{q'} \big), \mbox{ for all $t,s\in[0,1]$},
\end{equation}
for constants $\kappa>0$, $C>0,$ $\delta_1>1$, $q'\in(0,1]$ and $\delta_2  >1-q'$.
Then, as $m$ grows to infinity, the sequence of processes  $(Z_m)_m$ is tight in $D(0,1)$ (endowed with the Skorokhod topology and the corresponding Borel $\sigma$-field) and any limit is a.s. a continuous process.
\end{proposition}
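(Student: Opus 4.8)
The plan is to verify the standard moment criterion for tightness in the Skorokhod space $D(0,1)$, following the empirical-process results the statement reformulates (\cite{CM1996,SY1996,DP2007}). Tightness will follow from (a) tightness of the finite-dimensional marginals and (b) control of the Skorokhod oscillation modulus, $\lim_{\delta\to0}\limsup_m\P(w''(Z_m,\delta)\ge\epsilon)=0$ for every $\epsilon>0$. Marginal tightness is inherited from the companion finite-dimensional convergence results used alongside this proposition, and in general reduces, through \eqref{tight-crit}, to tightness of $Z_m$ at a single anchor point. Once tightness is in hand, the continuity of any weak limit $Z$ comes essentially for free: since $a_m\to\infty$ the second term of \eqref{tight-crit} drops out, so by the portmanteau (Fatou) inequality $\E|Z(t)-Z(s)|^\kappa\le C|t-s|^{\delta_1}$ with $\delta_1>1$, and Kolmogorov--Chentsov then furnishes a continuous modification that must coincide with the c\`adl\`ag $Z$. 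Thus the whole difficulty is concentrated in the modulus estimate (b).

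The structural point is that $Z_m=N_m-D_m$, where $N_m(t)=(a_m/m)\sum_i\ind{\Phi(\xi_i)\le t}$ is nondecreasing with jumps of size $a_m/m$, while $D_m(t)=a_m f_0(t)+a_m f_1(t)\overline{\xi}_m$ is continuous and $q$-H\"older but carries the diverging factor $a_m$. One cannot bound these two pieces separately, since the $a_m$ in $D_m$ would blow up: it is precisely the cancellation between $D_m$ and the mean of $N_m$ that keeps $Z_m$ of order one. This is why only the combined increment \eqref{tight-crit} is available and why the argument must stay at the level of $Z_m$ itself rather than exploit the monotonicity of $N_m$ directly.

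I would therefore run a grid argument at a carefully chosen mesh $h_m\asymp a_m^{-1/q}$. On an interval of this length the drift oscillation is at most $a_m L h_m^{q}(1+|\overline{\xi}_m|)$, which is kept arbitrarily small once the constant in $h_m$ is taken small, precisely because $\overline{\xi}_m\xrightarrow{P}0$ forces $1+|\overline{\xi}_m|$ to stay bounded --- this is where that hypothesis enters. Within such a cell the only remaining source of oscillation is the monotone part $N_m$, of which the $w''$-modulus discards the single largest jump, so the within-cell contribution reduces to the excess from cells containing two or more of the points $\Phi(\xi_i)$, which is summable and controlled by \eqref{tight-crit} at the mesh scale. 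Across cells I would chain \eqref{tight-crit}: the first term $|t-s|^{\delta_1}$ with $\delta_1>1$ is handled by the usual Kolmogorov mechanism, while the non-standard term $a_m^{-\delta_2/q}|t-s|^{q'}$, whose exponent $q'\le1$ is on its own too weak, is tamed by its vanishing prefactor together with the truncation of the chaining at scale $h_m$; the hypothesis $\delta_2>1-q'$ is exactly what makes the associated geometric sum converge and vanish as $\delta\to0$. Assembling the within-cell and across-cell bounds yields the required control of $w''(Z_m,\delta)$.

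The main obstacle is this last reconciliation: the extra term in \eqref{tight-crit} has H\"older exponent $q'\le1$, below the threshold of the off-the-shelf continuity criterion, and the continuous part $D_m$ diverges with $a_m$. The crux is the two-scale cut at $h_m\asymp a_m^{-1/q}$, which balances the taming of the diverging drift (via the $q$-H\"older bound on $f_0,f_1$ and $\overline{\xi}_m\to0$) against the convergence of the chaining (via $\delta_1>1$ and $\delta_2>1-q'$). Everything else --- the marginal tightness, the passage to the $w''$-modulus through the monotonicity of $N_m$, and the smallness of the individual jumps --- is routine empirical-process bookkeeping once this estimate is secured.
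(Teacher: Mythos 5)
Your proposal matches the paper's proof in all essentials: the same two-scale grid at mesh $p$ chosen so that $a_m p^q \asymp \eps$, the same exploitation of the monotone jump part to reduce within-cell oscillation to the increment across a cell plus the drift oscillation (the paper's inequality \eqref{equ-majecdf}), the same conversion of the non-standard term $(a_m)^{-\delta_2/q}|t-s|^{q'}$ into a Kolmogorov-type bound with exponent $\delta_1\wedge(q'+\delta_2)>1$ (which is exactly where $\delta_2>1-q'$ enters), and the same use of $\ol{\xi}_m \xrightarrow{P} 0$ to keep the diverging drift factor harmless. The only cosmetic difference is bookkeeping: the paper controls the uniform modulus directly via Billingsley's Theorems 15.5, 8.3 and 12.2, obtaining tightness and continuity of limits in one stroke, whereas you pass through the Skorokhod modulus $w''$ and then recover continuity of the limit separately by Fatou plus Kolmogorov--Chentsov.
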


\begin{proof}
The proof %, given for completeness,  
is based on standard arguments and is similar to the proof of Theorem~22.1 in \cite{Bill1968}.
Fix $\varepsilon\in(0,1)$ and $\eta>0$. 
Following Theorem~15.5 in \cite{Bill1968}, it is sufficient to prove that  there exists a $\delta \in (0,1)$ such that for large $m$,
\begin{align*}
 \P \left( \sup_{\substack{0\leq s,t \leq 1\\ |s- t|\leq \delta}} | Z_m(t) - Z_m(s) | >\eps \right) < \eta.
\end{align*}
We merely check (see, e.g.,  the proof of Theorem~8.3 in \cite{Bill1968}) that the latter holds if there exists $\delta \in (0,1)$ such that for large $m$,
\begin{align}\label{equ-relcomp}
\forall s \in [0,1],\:\: \P \left( \sup_{t: s\leq t \leq (s+\delta)\wedge 1} | Z_m(t) - Z_m(s) | >\eps \right) < \eta \delta.
\end{align}
Let us now prove \eqref{equ-relcomp}. Fix $s \in [0,1]$. Assumption \eqref{tight-crit} entails that for all $u,v\in[0,1]$ such that $(v-u)^q \geq \eps/a_m$, we have 
$$
\E \big| Z_m(v)-Z_m(u) \big|^\kappa
\leq   \frac{2C}{\eps^{\delta_2/q}} |v-u|^{\delta_3}  
$$
for $\delta_3=\delta_1\wedge(q'+\delta_2)>1$.
Hence, if $p>0$ is such that $p^{q}\geq \eps/a_m$, applying Theorem~12.2 of \cite{Bill1968} we have  for all integer $M$ such that $s+Mp\leq 1$ and for all $\lambda>0$, 
\begin{align}\label{equ-lambda}
\P\left( \max_{1\leq i\leq M} | Z_m(s+i p) - Z_m(s) | >\lambda \right) \leq \frac{K}{\lambda^\kappa \eps^{\delta_2/q}} (Mp)^{\delta_3}
\end{align}
for some positive constant $K>0$ (only depending on $\delta_3$, $\kappa$ and $C$). 
Next, we use the following inequality: for all $0\leq u,v\leq 1$, $u \leq v \leq u+p$,
\begin{align}\label{equ-majecdf}
| Z_m(v) - Z_m(u) |\leq | Z_m(u+ p) - Z_m(u) | + 2 L a_m p^q (1+|\ol{\xi}_m|).
\end{align}
The latter holds because we have 
\begin{align*}Z_m(v) - Z_m(u) &= (a_m/m) \sum_{i=1}^m  \ind{u < \xi_i\leq v }  - a_m (f_0(v)-f_0(u)) - a_m (f_1(v)-f_1(u)) \ol{\xi}_m \\
&\leq Z_m(u+p) - Z_m(u) + 2 L a_m p^q (1+|\ol{\xi}_m|)
\end{align*}
and 
$Z_m(u) - Z_m(v)\leq   a_m (f_0(v)-f_0(u)) + a_m (f_1(v)-f_1(u)) \ol{\xi}_m \leq  L a_m p^q (1+|\ol{\xi}_m|)$. 

Now, by using \eqref{equ-majecdf}, we obtain
\begin{align}\label{equ-lambda2}
\sup_{t: s\leq t \leq s+Mp} | Z_m(t) - Z_m(s) | \leq  3 \max_{1\leq i\leq M} | Z_m(s+ i p) - Z_m(s) | + 2 L a_m p^q  (1+|\ol{\xi}_m|).
\end{align}
Furthermore, provided that $a_m p^q \leq 2\eps$, we have $\P(2 L a_m p^q  (1+|\ol{\xi}_m|)>5L\eps) \leq \P(|\ol{\xi}_m|>1/4)$. % \leq 4/m$.
Hence, combining \eqref{equ-lambda} and \eqref{equ-lambda2}, by taking $\delta\in(0,1)$ such that %$\delta< \eta \eps^{4+\delta_2}/K$, 
$K\delta^{\delta_3-1}/\eps^{\kappa+\delta_2/q}<\eta/2$, 
we will obtain that for all $s\in[0,1-\delta]$, for large $m$,
$$
\P \left( \sup_{t: s\leq t \leq s+\delta} | Z_m(t) - Z_m(s) | > (3+5L)\eps \right)\leq \frac{K}{ \eps^{\kappa+\delta_2/q}} \delta^{\delta_3} +%\frac{4}{m}
\P(|\ol{\xi}_m|>1/4)
<\eta \delta,
$$
as soon as we can choose $p>0$ and an integer $M$ such that $Mp=\delta$ and $\eps/a_m\leq p^q \leq 2\eps/a_m$. This holds if there exists an integer into the interval $[\delta (a_m/\eps)^{1/q},\delta (a_m/(2\eps))^{1/q}]$, which is true for large $m$ because $a_m$ tends to infinity. This entails \eqref{equ-relcomp} with $\eps$ replaced by $ (3+5L)\eps$ and the proof is finished.
\end{proof}

\begin{proposition}[Partial functional delta method  on $D(0,1)$]\label{prop:FDM}
Consider the linear space $D(0,1)$ of càd-làg function on $[0,1]$ and the linear space $C(0,1)$ of continuous functions on $[0,1]$.  Let $\theta=(\theta_0,\theta_1)\in D(0,1)^2$.
Let $\phi : D(0,1)^2 \mapsto \R$ be Hadamard differentiable at $\theta$ tangentially to $C(0,1)$, w.r.t. the supremum norm, and such that the derivative is of the form $$\dot{\phi}_\theta(H_0,H_1)= g_\theta(H_0), \mbox{ for any $(H_0,H_1)\in C(0,1)^2$},$$ for a continuous linear mapping $g_\theta: C(0,1) \mapsto \R$.
Consider $\mathbb{Z}_{0,m}$, $\mathbb{Z}_{1,m}$, $m\geq 1$, processes valued in $D(0,1)$ and $\mathbb{Z}_{0}$, $\mathbb{Z}_{1}$ two processes valued a.s. in $C(0,1)$. Assume that the two following distribution convergences hold (w.r.t. the Skorokhod topology and the corresponding Borel $\sigma$-field), for some positive sequence $(a_m)_m$ tending to infinity:
\begin{align*}
&a_m (\mathbb{Z}_{0,m}- \theta_0) \leadsto \mathbb{Z}_{0};\\
&a_m (\mathbb{Z}_{1,m}- \theta_1) \leadsto \mathbb{Z}_{1}.
\end{align*}
Then we have 
\begin{align}\label{concl:FDM}
a_m (\phi(\mathbb{Z}_{0,m},\mathbb{Z}_{1,m}) -\phi( \theta)) \leadsto g_\theta(\mathbb{Z}_{0}).
\end{align}
\end{proposition}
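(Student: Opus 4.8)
The plan is to reduce to the classical functional delta method (\cite{Vaart1998}, Section~20.2) by a tightness-plus-subsequence argument, the guiding observation being that the hypotheses supply only \emph{marginal} weak convergence, whereas the classical theorem asks for \emph{joint} convergence towards a limit supported on the tangent set. Write $X_m = a_m(\mathbb{Z}_{0,m}-\theta_0)$ and $Y_m = a_m(\mathbb{Z}_{1,m}-\theta_1)$, so that the two hypotheses read $X_m \leadsto \mathbb{Z}_0$ and $Y_m \leadsto \mathbb{Z}_1$ in $D(0,1)$. First I would note that each of $(X_m)$ and $(Y_m)$ is uniformly tight (any weakly convergent sequence on the Polish Skorokhod space $D(0,1)$ is tight), and that marginal tightness upgrades for free to joint tightness of $(X_m,Y_m)$ in $D(0,1)^2$ through the elementary bound $\P((X_m,Y_m)\notin K_0\times K_1)\le \P(X_m\notin K_0)+\P(Y_m\notin K_1)$ applied to compact sets $K_0,K_1$ exhausting the marginal masses.

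Next, I would fix an arbitrary subsequence and, by Prokhorov's theorem, extract from it a further subsequence along which $(X_m,Y_m)\leadsto(X',Y')$ jointly in $D(0,1)^2$. Since the coordinate projections are continuous, the marginals of $(X',Y')$ are forced to be the unique weak limits of $X_m$ and $Y_m$, that is $X'\egalloi \mathbb{Z}_0$ and $Y'\egalloi \mathbb{Z}_1$; in particular $(X',Y')\in C(0,1)^2$ almost surely, so the joint limit lands in the tangent set. Along this sub-subsequence the classical functional delta method applies and yields
\begin{align*}
a_m\big(\phi(\mathbb{Z}_{0,m},\mathbb{Z}_{1,m})-\phi(\theta)\big)\leadsto \dot{\phi}_\theta(X',Y')=g_\theta(X'),
\end{align*}
the last equality being the assumed product form of the derivative.

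To conclude, I would use that $g_\theta$ is continuous, hence Borel, and that $X'\egalloi \mathbb{Z}_0$, so that $g_\theta(X')\egalloi g_\theta(\mathbb{Z}_0)$ \emph{independently of the chosen subsequence}. Thus every subsequence of the real-valued sequence $a_m(\phi(\mathbb{Z}_{0,m},\mathbb{Z}_{1,m})-\phi(\theta))$ admits a further subsequence converging in law to the fixed limit $g_\theta(\mathbb{Z}_0)$; since convergence in distribution of real random variables is metrizable, the whole sequence converges in law to $g_\theta(\mathbb{Z}_0)$, which is exactly \eqref{concl:FDM}.

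The hard part will be the compatibility between the Skorokhod topology used in the hypotheses and the supremum norm with respect to which $\phi$ is Hadamard differentiable. The clean resolution is that the tangential limit $(X',Y')$ is almost surely continuous, and Skorokhod convergence towards a continuous function coincides with uniform convergence; this is precisely why differentiability is required only tangentially to $C(0,1)$, and it legitimises invoking the normed-space form of the delta method along the sub-subsequence. Beyond this point, the genuine novelty of the statement — dispensing with joint convergence — is carried entirely by the two facts that marginal tightness comes for free and that the product structure $\dot{\phi}_\theta(H_0,H_1)=g_\theta(H_0)$ pins down the subsequential limit law uniquely; no control whatsoever on the dependence between $\mathbb{Z}_{0,m}$ and $\mathbb{Z}_{1,m}$ is ever needed.
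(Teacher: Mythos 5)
Your proof is correct and follows essentially the same route as the paper's: the subsequence principle, joint tightness obtained for free from marginal tightness, Prohorov extraction of a jointly convergent sub-subsequence, identification of the limit as a.s.\ continuous with marginals $\mathbb{Z}_0$ and $\mathbb{Z}_1$, and the product form of the derivative to pin down the limit law independently of the (non-unique) joint law. The only divergence is the final step: where you invoke the classical functional delta method as a black box and flag the Skorokhod-versus-supremum-norm compatibility as the ``hard part,'' the paper resolves exactly that point by applying Skorokhod's representation theorem to obtain a.s.\ convergent versions, noting that Skorokhod convergence to a continuous limit is uniform convergence, and then applying the definition of Hadamard differentiability pointwise with $t_\l = 1/a_\l$.
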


\begin{proof} 
Classically, let us show that for any subsequence $\{n\}$ there exists a further subsequence $\{\l\}$ such that \eqref{concl:FDM} holds along this subsequence. For any $\{n\}$, since both processes $a_{n} (\mathbb{Z}_{0,n}- \theta_0)$ and $a_{n} (\mathbb{Z}_{1,n}- \theta_1)$ are (Skorokhod-)tight, the joint process $\left(a_{n} (\mathbb{Z}_{0,n}- \theta_0),a_{n} (\mathbb{Z}_{1,n}- \theta_1)\right)$ also is.
Hence, by Prohorov's theorem, there exists a further subsequence $\{\l\}$ such that $\left(a_{\l} (\mathbb{Z}_{0,\l}- \theta_0),a_{\l} (\mathbb{Z}_{1,\l}- \theta_1)\right)$ converges in distribution. Now applying the Skorokhod's representation theorem (see, e.g., Theorem~6.7 page 70 in \cite{Bill1999}), there exists random elements $T_{\l}=(T_{0,\l},T_{1,\l})$, $\l\geq 1$, $T=(T_0,T_1)$, defined on a common probability space, such that $\mathcal{L}(T_{\l})=\mathcal{L}\left(a_{\l} (\mathbb{Z}_{0,\l}- \theta_0),a_{\l} (\mathbb{Z}_{1,\l}- \theta_1)\right)$, $\mathcal{L}(T_0)=\mathcal{L}(\mathbb{Z}_{0})$, $\mathcal{L}(T_1)=\mathcal{L}(\mathbb{Z}_{1})$ and $T_{\l}$ converges a.s. to $T$. Since both $T_0$ and $T_1$  belong  to $C(0,1)$ (a.s.) and since any sequence of càd-làg functions converging (w.r.t. to the Skorokhod distance) to a continuous function also converges uniformly, we obtain 
\begin{align*}
||T_{0,\l} - T_ 0 ||_\infty  + ||T_{1,\l} - T_1||_\infty \rightarrow 0 \:\:\:\mbox{ a.s.}   
\end{align*}
Hence, the Hadamard differentiability of $\phi$ entails:
\begin{equation*}
 \frac{\phi(\theta+t_\l T_\l) - \phi(\theta)}{t_\l} \rightarrow g_\theta(T_0) \:\:\:\mbox{ a.s.}   \:\:\:,
\end{equation*}
for any sequence $t_\l\rightarrow 0$. By taking $t_\l=1/a_\l$, we derive \eqref{concl:FDM} along the subsequence $\{\l\}$, which proves the result.
\end{proof}

\begin{lemma}\label{LLN}
Assume that $\Gamma$ satisfies \eqref{weakdep}. Then for any $h:\R \rightarrow \R$ measurable  such that $\E |h(Z)|<\infty$, we have
\begin{align}\label{prop:LLN}
m^{-1} \sum_{i=1}^m h(Y_i) \xrightarrow{P} \E [h(Z)],\:\: \mbox{ for $Z\sim\mtc{N}(0,1)$.}
\end{align}
\end{lemma}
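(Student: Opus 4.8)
The plan is to derive this weak law of large numbers from an $L^2$ variance estimate, but only after reducing to bounded functions, since $h$ is assumed merely integrable rather than square integrable. Throughout, each $Y_i=Y_i^{(m)}$ is marginally $\mtc{N}(0,1)$, so $\E[h(Y_i)]=\E[h(Z)]$ and the empirical mean already has the correct expectation; the entire difficulty lies in controlling the variance despite the dependence carried by $\Gamma$.

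First I would truncate. Fix $\eps>0$ and choose $M>0$ so large that $\E[|h(Z)|\ind{|h(Z)|>M}]<\eps$, which is possible by dominated convergence. Set $g=h\ind{|h|\le M}$, a bounded function. Since each $Y_i\sim\mtc{N}(0,1)$, we have $\E\brac{m^{-1}\sum_{i=1}^m |h(Y_i)-g(Y_i)|}=\E[|h(Z)|\ind{|h(Z)|>M}]<\eps$, so Markov's inequality yields $\P\paren{m^{-1}\sum_{i=1}^m|h(Y_i)-g(Y_i)|>\sqrt\eps}\le\sqrt\eps$; moreover $|\E[h(Z)]-\E[g(Z)]|<\eps$. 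Thus it is enough to prove \eqref{prop:LLN} for the bounded $g$, up to an error that is arbitrarily small in probability.

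For bounded $g$ I would expand $g-\E[g(Z)]=\sum_{\l\ge1}b_\l H_\l/\l!$ in $L^2(\R,\mtc{N}(0,1))$, with $b_\l=\E[g(Z)H_\l(Z)]$ and $\sum_{\l\ge1}b_\l^2/\l!=\var(g(Z))<\infty$. Using \eqref{prop:hermite} together with $\cov(Y_i,Y_j)=\Gamma_{i,j}$, one gets
\begin{align*}
\var\paren{m^{-1}\sum_{i=1}^m g(Y_i)}=m^{-2}\sum_{i,j}\cov(g(Y_i),g(Y_j))=\sum_{\l\ge1}\frac{b_\l^2}{\l!}\,a_{m,\l},
\end{align*}
where $a_{m,\l}:=m^{-2}\sum_{i,j}(\Gamma_{i,j})^\l\ge0$. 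By the equivalence recorded after Proposition~\ref{prop:covG}, Assumption~\eqref{weakdep} forces $a_{m,\l}=o(1)$ for every fixed $\l\ge1$ (that is, \eqref{weakdepforall}); the borderline $\l=1$ term is handled directly, since $a_{m,1}=m^{-1}+\gamma_m$ with $|\gamma_m|\le\paren{m^{-2}\sum_{i,j}(\Gamma_{i,j})^2}^{1/2}\to0$ by Cauchy--Schwarz. Because $|\Gamma_{i,j}|\le1$ gives $a_{m,\l}\le2$ uniformly in $m\ge1$ and $\l\ge1$, the series is dominated by $2\sum_{\l\ge1}b_\l^2/\l!<\infty$, so dominated convergence over $\l$ yields $\var\paren{m^{-1}\sum_i g(Y_i)}\to0$. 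Hence $m^{-1}\sum_i g(Y_i)\to\E[g(Z)]$ in $L^2$, and therefore in probability.

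Combining the two steps, $m^{-1}\sum_i h(Y_i)-\E[h(Z)]$ is, with probability at least $1-\sqrt\eps$ and for $m$ large, of size $O(\sqrt\eps)$; letting $\eps\to0$ proves \eqref{prop:LLN}. The one genuinely delicate point is the variance estimate for bounded $g$: the interchange of the limit in $m$ with the infinite Hermite sum must be justified, and this is exactly what the uniform bound $a_{m,\l}\le2$ combined with $\sum_\l b_\l^2/\l!<\infty$ supplies; the isolated $\l=1$ contribution, which is \emph{not} controlled by $\sum_{i,j}(\Gamma_{i,j})^2$ alone, is the reason one must invoke the separate Cauchy--Schwarz bound on $\gamma_m$.
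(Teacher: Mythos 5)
Your proof is correct, but it follows a genuinely different route from the paper's, so a comparison is worth making. The paper also reduces to bounded functions, but it picks a \emph{continuous} bounded approximant $h_\eps$ with $\E|h(Z)-h_\eps(Z)|\leq \eps$, and then gets the convergence for $h_\eps$ essentially for free from machinery already in place: by the equivalences of Section~\ref{sec:convcov}, Assumption \eqref{weakdep} implies $\G(t)\xrightarrow{P} t$ for every $t$, i.e.\ weak convergence (in probability, handled via an a.s.\ subsequence argument) of the empirical measure of the $\Phi(Y_i)$'s to the uniform law, so integrals of continuous bounded functions converge; the two pieces are then glued together with the general approximation Lemma~\ref{lemapproxeps}. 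You instead truncate ($g=h\ind{|h|\leq M}$, bounded but not continuous) and prove the law of large numbers for $g$ from scratch, by expanding $g$ itself in Hermite polynomials and showing
\begin{align*}
\var\Big(m^{-1}\sum_{i=1}^m g(Y_i)\Big)=\sum_{\l\geq 1}\frac{b_\l^2}{\l!}\,a_{m,\l}\to 0
\end{align*}
via dominated convergence in $\l$, with the $\l=1$ term controlled separately by Cauchy--Schwarz. Both arguments ultimately rest on the same engine, the orthogonality relation \eqref{prop:hermite}, but you apply it directly to the approximant, whereas the paper applies it only to indicator functions (through Proposition~\ref{prop:covG}) and then upgrades to general continuous bounded functions by a portmanteau-type step. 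What your version buys is self-containedness: no continuity of the approximant, no weak-convergence-in-probability subtlety, and a fully explicit identification of where \eqref{weakdep} enters (uniform boundedness of $a_{m,\l}$ plus $a_{m,\l}\to 0$ for each fixed $\l$, the $\l\geq 2$ cases following from $|\Gamma_{i,j}|\leq 1$). What the paper's version buys is brevity, since it recycles the Section~\ref{sec:convcov} equivalences and Lemma~\ref{lemapproxeps} rather than redoing the variance computation for a new class of functions.
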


\begin{proof}
By Section~\ref{sec:convcov}, Assumption \eqref{weakdep} implies that $\forall t \in [0,1],$ $\G(t)  \xrightarrow{P}  t$. 
Since $h\in L^1(\R, \mathcal{N}(0,1))$, for any $\eps>0$, there is a continuous bounded function $h_\eps$ such that $\E |h(Z)-h_\eps(Z)|\leq \eps$. Moreover, by definition of the weak convergence, \eqref{prop:LLN} holds for $h=h_\eps$ (for instance, the convergence in probability can be seen as an a.s. convergence up to consider subsequence). Since we have 
$$
\sup_{m\geq 1}\left\{ \E \left| m^{-1} \sum_{i=1}^m (h(Y_i) - h_\eps(Y_i))\right|\right\} \leq \sup_{m\geq 1}\left\{  m^{-1} \sum_{i=1}^m \E \left|h(Y_i) - h_\eps(Y_i)\right|\right\} \leq \eps,
$$
we can conclude by using Lemma~\ref{lemapproxeps}.
\end{proof}

The following lemma is classical, see, e.g., Theorem~4.2 in \cite{Bill1968}.
\begin{lemma}\label{lemapproxeps}
For $n\geq 1$ and $\eps>0$, let $X_n^\eps$, $X_n$, $X^\eps$, $X$ be real random variables ($X_n$ and $X_n^\eps$ being defined on the same probability space) and such that
\begin{itemize}
 \item[(a)] $\forall \eps>0$, $X_n^\eps \leadsto X^\eps$ as $n \to \infty$;
 \item[(b)] $X^\eps \leadsto X$ as $\eps \to 0$; %\stackrel{}{}
 \item[(c)] $\limsup_{n\to \infty} \{\E |X_n^\eps-X_n |\} \rightarrow 0$ as $\eps \to 0$.
 \end{itemize}
 Then $X_n \leadsto X$.
\end{lemma}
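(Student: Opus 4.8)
The plan is to prove the weak convergence $X_n \leadsto X$ by checking that $\E[f(X_n)] \to \E[f(X)]$ for every bounded Lipschitz function $f:\R\to\R$, since this class of test functions is convergence-determining for weak convergence on $\R$ (Portmanteau theorem). The three hypotheses (a)--(c) will be combined through a single triangle-inequality decomposition, inserting the approximating variables $X_n^\eps$ and their limits $X^\eps$ as intermediate terms.

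First I would fix a bounded Lipschitz function $f$ with Lipschitz constant $L_f$ and write, for each $n$ and each $\eps>0$,
\begin{align*}
\big|\E[f(X_n)] - \E[f(X)]\big|
&\leq \big|\E[f(X_n)] - \E[f(X_n^\eps)]\big| + \big|\E[f(X_n^\eps)] - \E[f(X^\eps)]\big|\\
&\quad + \big|\E[f(X^\eps)] - \E[f(X)]\big|.
\end{align*}
The first term is at most $L_f\,\E|X_n - X_n^\eps|$ by the Lipschitz property of $f$; the second term tends to $0$ as $n\to\infty$ at fixed $\eps$ by hypothesis (a) (here $f$ bounded and continuous is used); and the third term tends to $0$ as $\eps\to0$ by hypothesis (b).

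The crux is the order in which the limits are taken. I would first take $\limsup_{n\to\infty}$ at fixed $\eps$, so that the middle term vanishes by (a), yielding
\[
\limsup_{n\to\infty}\big|\E[f(X_n)] - \E[f(X)]\big| \leq L_f\,\limsup_{n\to\infty}\E|X_n - X_n^\eps| + \big|\E[f(X^\eps)] - \E[f(X)]\big|.
\]
Then I would let $\eps\to0$: the first term on the right vanishes by hypothesis (c) and the second by hypothesis (b). Since the left-hand side does not depend on $\eps$, this forces $\E[f(X_n)]\to\E[f(X)]$, and as $f$ was an arbitrary bounded Lipschitz function, the claim follows.

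There is no substantial obstacle here; the argument is a routine $\eps$-of-room estimate. The only point requiring care is that hypothesis (c) controls $\E|X_n^\eps - X_n|$ only through its $\limsup_n$, so the order ``$\limsup_n$ first, then $\eps\to0$'' is essential: one cannot bound the first term uniformly in $n$ before passing to the limit. The Lipschitz assumption on $f$ is precisely what converts the $L^1$-closeness supplied by (c) into closeness of the test-function expectations, which is why bounded Lipschitz functions, rather than all bounded measurable ones, form the natural test class for this argument.
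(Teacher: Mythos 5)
Your proof is correct. Note, however, that the paper does not actually prove this lemma: it is stated as classical and dispatched with a citation to Theorem~4.2 of Billingsley (1968), the ``convergence together'' theorem, so your argument should be compared with that classical proof rather than with anything written out in the paper. Billingsley's hypothesis is weaker than (c) --- only closeness in probability, $\lim_{\eps\to 0}\limsup_{n}\P\left(|X_n^\eps-X_n|\geq \delta\right)=0$ for every $\delta>0$, which follows from (c) by Markov's inequality --- and his proof runs through the Portmanteau characterization with closed sets, via the inclusion bound $\P(X_n\in F)\leq \P\left(X_n^\eps\in F^\delta\right)+\P\left(|X_n-X_n^\eps|\geq\delta\right)$ (with $F^\delta$ the $\delta$-enlargement of $F$) followed by the same two iterated limits you use. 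Your route through bounded Lipschitz test functions is a clean, self-contained alternative: the Lipschitz bound converts the $L^1$ control in (c) directly into control of test-function expectations, and you correctly identify that the order of limits ($\limsup_n$ at fixed $\eps$ first, then $\eps\to 0$) is the one point where the argument could go wrong. What you give up is generality --- your argument genuinely needs the $L^1$ form of (c), not merely closeness in probability --- and what you gain is that nothing beyond the convergence-determining property of bounded Lipschitz functions is required; since the paper states (c) in $L^1$ form anyway, your proof fully covers the lemma as stated.
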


\section*{Acknowledgments}
We are grateful to Jérôme Dedecker, Stephane Gaiffas, Pierre Neuvial and Mathieu Rosenbaum for helpful discussions.
The second author was supported by the French Agence Nationale de la Recherche (ANR grant references: ANR-09-JCJC-0027-01, ANR-PARCIMONIE, ANR-09-JCJC-0101-01) 
and by the French ministry of foreign and european affairs (EGIDE - PROCOPE project number 21887 NJ).

\bibliographystyle{apalike} 
\bibliography{biblio}
\end{document}